\renewcommand{\d}{\mathrm{d}}
\newcommand{\D}{\mathrm{D}}
\newcommand{\e}{\mathrm{e}}
\newtheorem{Thm}{Theorem}[section]
\newtheorem{Lem}[Thm]{Lemma}
\newtheorem{Prop}[Thm]{Proposition}
\newtheorem{Rem}[Thm]{Remark}
\newtheorem{Def}[Thm]{Definition}
\newtheorem{Ex}[Thm]{Example}
\newtheorem{Fact}[Thm]{Fact}
\newtheorem{Nota}[Thm]{Notation}
\newtheoremstyle{named}{}{}{\itshape}{}{\bfseries}{.}{.5em}{#1 #3}
\theoremstyle{named}
\newtheorem*{MThm}{Theorem}
\newtheorem*{MDef}{Definition}
\def\R{\mathbb{R}}
\def\Q{\mathbb{Q}}
\def\C{\mathbb{C}}
\def\Z{\mathbb{Z}}
\def\T{\mathbb{T}}
\def\to{\longrightarrow}
\def\cP{\mathcal{P}}
\def\cU{\mathcal{U}}
\def\a{\alpha}
\def\e{\epsilon}
\def\c{\gamma}
\def\D{\Delta}
\def\d{\delta}
\def\l{\lambda}
\def\W{\Omega}
\def\w{\omega}
\def\ze{\zeta}
\def\sl{\mathfrak{sl}}
\def\g{\mathfrak{g}}
\def\fh{\mathfrak{h}}
\def\fq{\mathfrak{q}}
\def\ox{\otimes}
\def\o+{\oplus}
\def\bo+{\bigoplus}
\def\x{\times}
\def\p[#1,#2]{\phi_{#1,#2}}
\def\til[#1]{\widetilde{#1}}
\def\what[#1]{\widehat{#1}}
\def\bb{\textbf{b}}
\def\bu{\textbf{u}}
\def\bv{\textbf{v}}
\def\be{\textbf{e}}
\def\bE{\textbf{E}}
\def\bf{\textbf{f}}
\def\bF{\textbf{F}}
\def\bi{\textbf{i}}
\def\bK{\textbf{K}}
\def\bU{\textbf{U}}
\def\z[#1]{z_{#1}}
\def\=>{\Longrightarrow}
\def\inj{\hookrightarrow}
\def\<{\langle}
\def\>{\rangle}
\def\corr{\longleftrightarrow}
\def\^{\wedge}
\def\+{\dagger}
\def\iff{\Longleftrightarrow}
\def\inv{^{-1}}
\def\over[#1]{\overline{#1}}
\def\vec[#1]{\overrightarrow{#1}}
\def\mat[#1, #2]{\left[\begin{array}{ccccc}#1\end{array}\left|\begin{array}{c}#2\end{array}\right.\right]}
\def\xto[#1]{\xrightarrow{#1}}
\def\dd[#1,#2]{\frac{d#1}{d#2}}
\def\del[#1,#2]{\frac{\partial #1}{\partial #2}}
\def\Facts[#1]{\begin{Fact}\mbox{}\begin{itemize}#1\end{itemize}\end{Fact}}
\def\Notation[#1]{\begin{Nota}\mbox{}\begin{itemize}#1\end{itemize}\end{Nota}}
\def\Eqn[#1]{\begin{eqnarray*}#1\end{eqnarray*}}
\def\tab{\;\;\;\;\;\;}
\newcommand{\veca}[2][cccccccccccccccccccccccccccccccccccccccccc]{\left(\begin{array}{#1}#2 \\ \end{array} \right)}
\newcommand{\Eq}[1]{\begin{align}#1\end{align}}
\newcommand{\case}[2][cccccccccccccccccccccccccccccccccccccccccc]{\left\{\begin{array}{#1}#2 \\ \end{array}\right.}
\begin{document}
\title{Positive representations of non-simply-laced split~real quantum groups}

\author{  Ivan C.H. Ip\footnote{
         	Kavli Institute for the Physics and Mathematics of the Universe (WPI), 
		The University of Tokyo, 
		Kashiwa, Chiba 
		277-8583, Japan
		\newline
		Email: ivan.ip@ipmu.jp
          }
}

\date{\today}

\numberwithin{equation}{section}

\maketitle

\begin{abstract}
We construct the positive principal series representations for $\cU_q(\g_\R)$ where $\g$ is of type $B_n$, $C_n$, $F_4$ or $G_2$, parametrized by $\R^n$ where $n$ is the rank of $\g$. We show that under the representations, the generators of the Langlands dual group $\cU_{\til[q]}({}^L\g_\R)$ are related to the generators of $\cU_q(\g_\R)$ by the transcendental relations. This gives a new and very simple analytic relation between the Langlands dual pair. We define the modified quantum group $\bU_{\fq\til[\fq]}(\g_\R)=\bU_{\fq}(\g_\R)\ox\bU_{\til[\fq]}({}^L\g_\R)$ of the modular double and show that the representations of both parts of the modular double commute with each other, and there is an embedding into the $q$-tori polynomials. 
\end{abstract}

{\small {\textbf{Keywords.} positive representations; non-simply-laced; split real quantum groups; Langlands dual; modular double}
\\

{\small {\textbf{2010 MSC:} 17B37, 81R50}
\newpage
\tableofcontents
\section{Introduction}\label{sec:intro}
In this paper, we give the construction of the positive principal series representations for the quantum group $\cU_{q}(\g_\R)$ and its modular double where $\g$ is of non-simply-laced type $B_n$, $C_n$, $F_4$ or $G_2$, generalizing our recent work \cite{Ip} on the simply-laced case, thus completing the constructions corresponding to simple $\g$ of all types. The transcendental relations that were part of the axioms in the simply-laced case, relates the quantum group $\cU_q(\g_\R)$ with its Langlands dual $\cU_{\til[q]}({}^L\g_\R)$ in the non simply-laced case. This might be considered as the simplest realization of the Langlands dual pair, given by a single analytic relation.

The notion of the \emph{positive principal series representations} was introduced in \cite{FI} as a new research program devoted to the representation theory of split real quantum groups. It uses the concept of modular double for quantum groups \cite{Fa}, and has been studied for $\cU_{q\til[q]}(\sl(2,\R))$ by Ponsot and Teschner \cite{PT1}. Let us recall the definition in the simply-laced case. Let $E_i,F_i,K_i$ be the generators of $\cU_q(\g_\R)$ with the standard quantum relations, where $q=e^{\pi ib^2}$, $b^2\in\R\setminus\Q$ and $0<b<1$. Similarly let $\til[E_i],\til[F_i],\til[K_i]$ be the generators of $\cU_{\til[q]}(\g_\R)$ by replacing $b$ with $b\inv$, where $\til[q]=e^{\pi ib^{-2}}$. Then using the rescaled variables
\Eq{e_i=2\sin(\pi b^2)E_i, \tab f_i=2\sin(\pi b^2)F_i} and similarly for $\til[e_i]$ and $\til[f_i]$ with $b$ replaced by $b\inv$,
the positive representations has the following remarkable properties:
\begin{itemize}
\item[(i)] the generators $e_i,f_i,K_i^{\pm 1}$ and $\til[e_i],\til[f_i],\til[K_i]^{\pm 1}$ are represented by positive essentially self-adjoint operators,
\item[(ii)] the generators satisfy the transcendental relations
\Eq{\label{trans}e_i^{\frac{1}{b^2}}=\til[e_i],\tab f_i^{\frac{1}{b^2}}=\til[f_i],\tab K_i^{\frac{1}{b^2}}=\til[K_i].}
\end{itemize}
Furthermore, by modifying the definition of $e_i, f_i, K_i^{\pm1}$ and the tilde variables with certain factors of $K_i$'s, we also obtain the compatibility with the modular double $\bU_{\fq\til[\fq]}(\g_\R)$:
\begin{itemize}
\item[(iii)]  the generators $\be_i,\bf_i,\bK_i^{\pm 1}$ commute with $\til[\be_i],\til[\bf_i],\til[\bK_i]^{\pm 1}$.
\end{itemize}

Note that since the generators are represented by positive operators, the real powers $\frac{1}{b_i^2}$ defining the transcendental relations are well-defined by means of functional calculus. However, when $\g$ is not simply-laced, the transcendental relations \eqref{trans} is substantially different. The transcendental relations now relate $\g$ with its Langlands dual ${}^L\g$ directly with appropriate changes of parameters. This is a special feature of the non-simply-laced case for quantum groups. The starting point of this paper is the following remarkable result:

\begin{Thm}[Langlands duality for quantum group]\label{MainThm}
For each positive simple root $\a_i$, define $$q_i=q^{\frac{1}{2}(\a_i,\a_i)}=e^{\pi ib_i^2},$$ and let $b_s=b_i$ when $\a_i$ is a short root (see Definition \ref{qi}). Let $\til[q]=e^{\pi ib_s^{-2}}$. Define the operators
\begin{eqnarray}
\til[e_i]&:=&(e_i)^{\frac{1}{b_i^2}},\\
\til[f_i]&:=&(f_i)^{\frac{1}{b_i^2}},\\
\til[K_i]&:=&(K_i)^{\frac{1}{b_i^2}},
\end{eqnarray}
where
\begin{eqnarray}
e_i=2\sin\pi b_i^2E_i,&\tab& f_i=2\sin\pi b_i^2F_i,\\
\til[e_i]=2\sin\pi b_i^{-2}\til[E_i],&\tab&\til[f_i]=2\sin\pi b_i^{-2}\til[F_i].
\end{eqnarray}
Then the generators of $\cU_{\til[q]}({}^L\g_\R)$ are represented by the operators $\til[E_i],\til[F_i]$ and $\til[K_i]$, where ${}^L\g_\R$ is defined by replacing long roots with short roots and short roots with long roots in the Dynkin diagram of $\g$.
\end{Thm}

We remark that the Langlands dual already in the simply-laced case appear as the commutant of the quantum group $\cU_q(\g_\R)$ \cite[Theorem 10.4]{Ip}, where the type of algebra is the same. The possibility of such phenomenon has been suggested in \cite{GKL}. Hence Theorem \ref{MainThm} about the transcendental relations gives an explicit construction of the Langlands dual in the context of representation theory of split real quantum groups, where in the non-simply-laced case the type of algebra is different. Furthermore it cannot be obtained in the classical setting as $b\to 0$. This relation between modular duality and Langlands duality should indeed have deep consequences, as pointed out for example in \cite{FG,T}.

With the above theorem, we define the (modified) modular double by
\Eq{\bU_{\fq\til[\fq]}(\g_\R):=\bU_{\fq}(\g_\R)\ox \bU_{\til[\fq]}({}^L\g_\R),}
and the main results of the paper are the following:
\begin{Thm}There exists a family of positive principal series representation for $\cU_q(\g_\R)$ and its (modified) modular double $\bU_{\fq\til[\fq]}(\g_\R)$, parametrized by $\l\in\R_{\geq0}^n$ where $n=rank(\g)$, satisfying properties (i),(iii) and Theorem \ref{MainThm} above.
\end{Thm}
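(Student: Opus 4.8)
The plan is to construct the representations explicitly, generator by generator, following the template established in the simply-laced case \cite{Ip}, and then verify the three required properties separately. First I would fix a convenient realization of the positive root system for each of the four types $B_n$, $C_n$, $F_4$, $G_2$, choosing a reduced expression for the longest Weyl group element $w_0$ and using the associated Lusztig parametrization to coordinatize the representation space $L^2(\R^N)$ where $N=\ell(w_0)$ is the number of positive roots. The generators $E_i, F_i, K_i$ should then be written as explicit operators built from the Fourier-self-dual positive operators $e^{2\pi b x}$ and the quantum dilogarithm, in such a way that each $e_i$, $f_i$, $K_i^{\pm1}$ is manifestly a positive essentially self-adjoint operator; this establishes property (i). The verification that these operators satisfy the defining relations of $\cU_q(\g_\R)$ (including the $q$-Serre relations, which are more delicate in the non-simply-laced case because of the higher-order terms) is the technical backbone and would proceed by reducing to rank-two computations in the $B_2=C_2$ and $G_2$ parabolic subalgebras.

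Next I would invoke the Main Theorem (Theorem \ref{main}) directly: having realized $E_i, F_i, K_i$ as positive operators, the transcendental relations $\til[E_i]=(e_i)^{1/b_i^2}/(2\sin\pi b_i^{-2})$ and so on define the Langlands-dual generators as explicit functional-calculus powers of the same positive operators. The content of the Main Theorem is precisely that these powers satisfy the relations of $\cU_{\til[q]}({}^L\g_\R)$, so property concerning the dual part is inherited for free once the Main Theorem is in hand. The remaining task is property (iii): commutativity of the two halves of the modular double. Here I would modify the generators by suitable multiplicative factors of the $K_i$ (the ``boldface'' normalization $\be_i, \bf_i, \bK_i$) and then show that each modified generator of $\bU_{\fq}$ commutes with each modified generator of $\bU_{\til[\fq]}$. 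The key mechanism is that operators of the form $e^{2\pi b x}$ and $e^{2\pi b^{-1} x}$ commute (up to the Weyl relations collapsing because $e^{2\pi i}=1$), so that the $b$- and $b^{-1}$-exponentials built from the same canonical coordinates commute on the nose; this is the standard modular-double phenomenon and reduces again to rank-one and rank-two checks.

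Finally, the parametrization by $\l\in\R^r$ enters through the choice of the central character / highest-weight-type data: the principal series is induced from a one-dimensional representation of the Borel, and $\l$ records the continuous family of such characters, contributing a unitary twist that does not affect positivity or the relations. Assembling these pieces yields a family of representations satisfying (i), (iii), and the Main Theorem simultaneously, which is the assertion.

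I expect the main obstacle to be the verification of the $q$-Serre relations for the non-simply-laced generators, together with the correct choice of the normalization factors of $K_i$ that makes property (iii) hold \emph{without} destroying positivity in (i) or the relations checked for property concerning $\cU_q(\g_\R)$. In the simply-laced case these were balanced by a symmetric modification, but for $B_n$, $C_n$, $F_4$, $G_2$ the asymmetry between long and short roots forces the modifying factors to depend on the root length, and ensuring the $b$-part and $b^{-1}$-part remain mutually commuting under this asymmetric modification is the delicate point. I anticipate handling this by reducing every relation to the two rank-two cases and verifying them by direct computation with the quantum dilogarithm's pentagon and the Weyl-type commutation relations, so that the general-rank statement follows by the parabolic/Lusztig inductive structure.
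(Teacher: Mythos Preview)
Your proposal is correct and follows essentially the same architecture as the paper: Lusztig coordinates on $L^2(\R^{l(w_0)})$, explicit positive formulas for $e_i,f_i,K_i$, rank-two reduction for the relations, then the Main Theorem for the dual part and a $K_i$-modification for (iii). Two small corrections worth noting: first, the generators themselves are pure sums of exponentials (no quantum dilogarithm), and the paper's actual mechanism for the rank-two reduction is an explicit unitary intertwiner $\Phi$ built from \emph{ratios} of $g_b$ (Theorem \ref{trans2}) that realizes the change of reduced word $s_is_js_is_j\leftrightarrow s_js_is_js_i$, so that one may always transport to an expression where the generator in question is simple; second, your anticipated obstacle about root-length-dependent $K_i$-modifications does not materialize---the paper shows the modification is literally the same alternating $n_i\in\{0,1\}$ scheme as in the simply-laced case.
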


More precisely, for every reduced expression for $w_0$, we parametrize $U_{>0}^+$ using the Lusztig's coordinate, and construct explicitly the positive representations. For each change of words of $w_0$, we establish a unitary transformation (Theorem \ref{trans2}), so that in particular the family of positive representation is \emph{independent of choice} of reduced expression of $w_0$. 

Hence by choosing a ``good" reduced expression for $w_0$, we can write down explicitly the positive representations for $\cU_q(\g_\R)$. Not surprisingly, by the philosophy of folding of Dynkin diagram, we observe the following
\begin{Thm}The positive representations of $\cU_q(\g_\R)$ of type $B_n$, $C_n$, $F_4$ and $G_2$ can be obtained essentially (up to some quantized multiples) from the positive representations of $\cU_q(\g_\R)$ of type $A_{2n-1}$, $D_{n+1}$, $E_6$ and $D_4$ respectively under certain identifications of roots.
\end{Thm}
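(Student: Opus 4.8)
The plan is to realize the asserted correspondence as the quantum counterpart of the classical folding of Dynkin diagrams. Each of the simply-laced types $A_{2n-1}$, $D_{n+1}$, $E_6$ and $D_4$ carries a nontrivial diagram automorphism $\s$, of order $2$, $2$, $2$ and $3$ respectively, and the simple roots of the target type $B_n$, $C_n$, $F_4$, $G_2$ are in bijection with the $\s$-orbits $O_i$ of simple roots of the corresponding simply-laced $\g'$. First I would fix, for each pair $(\g',\g)$, this combinatorial dictionary: the map of Cartan data sending each simple root $\a_i$ of $\g$ to the orbit $O_i$, together with the induced identification of the positive roots of $\g$ with the $\s$-orbits of positive roots of $\g'$. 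Whether a given simple root of $B_n$, $C_n$, $F_4$, $G_2$ is long or short, equivalently the value of $b_i$, is dictated by whether $O_i$ is a fixed node or a genuine multi-node orbit; getting this matching right is what distinguishes the $B$-type target from the $C$-type target, and reflects the same Langlands-dual phenomenon underlying the Main Theorem.

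I would then invoke the word-independence of the positive representation (Theorem \ref{trans2}) to reduce to one convenient reduced expression. Concretely, I would take a $\s$-adapted reduced word for $w_0$ of $\g'$, namely the lift of a reduced word for $w_0$ of $\g$ in which each simple reflection is replaced by the product of the commuting reflections in its $\s$-orbit; this is a reduced word for the longest element of $\g'$ whose Lusztig coordinates $u_\b$ are naturally grouped by the $\s$-orbits of positive roots. Since the family of positive representations is independent of this choice, any identity verified for the adapted word holds in full generality, and on such a word the operators for the simply-laced types are already available explicitly from \cite{Ip} as quantum-dilogarithm difference operators in the $u_\b$.

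The heart of the argument is the specialization along the folding. I would impose the identification $u_\b = u_{\s\b}$ for positive roots $\b$ in a common $\s$-orbit, collapsing the coordinates to one per positive root of $\g$, and then verify generator by generator that the operators representing the simply-laced $E_j, F_j, K_j$ for $j \in O_i$ reduce, up to an explicit quantized multiple (a power of $q_i$, or a factor such as $2\sin\pi b_i^2$), to the operators representing $E_i, F_i, K_i$ of $\g$. For a fixed node ($|O_i|=1$) this is essentially a direct reading-off; for a genuine orbit the generator of $\g$ arises as the image of a $q$-symmetrized combination of the pairwise-commuting simply-laced generators indexed by $O_i$, and the Cartan generator $K_i$ becomes the product $\prod_{j\in O_i}K_j$.

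The step I expect to be the main obstacle is exactly this last matching for the multi-node orbits. One must check that after the collapse the operators satisfy the $q_i$-deformed Serre relations of the non-simply-laced type, with the correct quantum integers $[m]_{q_i}$ in place of those inherited from $\g'$, and that essential self-adjointness and positivity survive the specialization. Tracking the rescaling factors $2\sin\pi b_i^2$ through this collapse is the delicate bookkeeping needed so that the resulting representation is simultaneously positive and compatible with the transcendental relations of the Main Theorem. I would treat the order-$3$ case $D_4 \to G_2$ last, as a self-contained computation, since there three generators must be symmetrized and the length ratio $3$ makes it the most demanding instance.
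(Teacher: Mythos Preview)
Your overall strategy is the same as the paper's: choose a $\s$-adapted reduced word for $w_0$ of $\g'$ (the paper phrases this as replacing each $s_i$ of the non-simply-laced word by the product of the commuting simple reflections in the orbit $O_i$), then identify the Lusztig coordinates along $\s$-orbits, $u_\b=u_{\s\b}$. So the skeleton is right.

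There is, however, a real misconception in how you read the phrase ``up to some quantized multiples.'' You take this to mean an overall scalar on each generator (a power of $q_i$, or $2\sin\pi b_i^2$). In the paper it means something quite different and more subtle: after the collapse $u_\b=u_{\s\b}$, certain \emph{individual terms} in the expression for a generator become equal, producing an integer coefficient (a $2$ in the doubly-laced cases, a $3$ in $D_4\to G_2$); the final step of the paper's procedure is to replace that integer by the corresponding $q_s$-integer $[2]_{q_s}$ or $[3]_{q_s}$ by hand. Compare the $[2]_{q_1}[v-t]e(p_t-p_u-p_v)$ term in \eqref{e2C}, or the $[2]_{q_2}$ term in Theorem~\ref{G2}. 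This replacement is not produced by the restriction---pure folding gives the classical integer---so your plan to ``verify that the collapsed operators satisfy the $q_i$-deformed Serre relations'' would in fact fail at exactly these terms. The non-simply-laced quantum relations only hold \emph{after} this ad hoc quantization of the multiplicity.

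Consequently the logical structure of the argument is also different from what you propose. The paper does not derive the non-simply-laced representation from the simply-laced one and then check the relations; it has already constructed the non-simply-laced positive representation directly in Sections~\ref{sec:B2}--\ref{sec:G2} (with its relations verified via Theorem~\ref{Serre} and the explicit transformations), and the folding theorem is the observation that the four-step recipe---rescale $q$, identify paired variables, rescale short-root variables by $\sqrt{2}$ or $\sqrt{3}$, quantize the resulting integer multiplicities---converts the known simply-laced formulas of \cite{Ip} into the already-verified non-simply-laced ones. Your remark that for a multi-node orbit the target generator should be a ``$q$-symmetrized combination'' of the $E_j$, $j\in O_i$, is also off: after identifying the paired variables those $E_j$ become the \emph{same} operator (by $\s$-symmetry of the formulas), not a sum.
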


Finally, as in the simply-laced case, by using the modified version $\bU_{\fq\til[\fq]}(\g_\R)$ of the modular double, we have the following properties.
\begin{Thm} \label{ThmLL}The commutant of (the adjoint form) $\bU_{\fq}(\g_\R)$ is the simply-connected form of the Langlands dual group $\widehat{\bU_{\til[\fq]}}({}^L\g_\R)$.
\end{Thm}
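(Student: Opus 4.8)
The plan is to prove the asserted equality of commutants as an equality of the two generated von Neumann algebras, by establishing the two inclusions separately. Write $\cM$ for the von Neumann algebra generated by the positive representation of $\bU_{\fq}(\g_\R)$ and $\cN$ for the one generated by $\bU_{\til[\fq]}({}^L\g_\R)$; by the Main Theorem $\cN$ is generated by the operators $\til[E_i],\til[F_i],\til[K_i]$. The inclusion $\cN\subseteq\cM'$ is essentially property (iii), which says that the modified generators of the two halves commute. To promote this from an algebraic to a von Neumann algebraic statement I would use the explicit form of the generators — each a positive exponential of a self-adjoint linear combination of the underlying position and momentum operators — and check that the commutation is \emph{strong}, i.e. that the spectral projections of the corresponding positive self-adjoint operators commute.

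The substantive direction is the reverse inclusion $\cM'\subseteq\cN$, for which I would exploit the embedding of the positive representation into the algebra of $q$-tori polynomials. Under this embedding the representation space is $L^2(\R^N)$ and every generator is realized through Weyl exponentials $e^{2\pi b_i(\cdots)}$ built from canonically conjugate pairs $X,P$ satisfying $[X,P]=\frac{1}{2\pi i}$, with $\bU_{\fq}(\g_\R)$ realized at scale $b_i$ and $\bU_{\til[\fq]}({}^L\g_\R)$ at scale $b_i^{-1}$. The whole computation then localizes, one conjugate pair at a time, to the model already understood for $U_{q\til[q]}(\sl(2,\R))$: since $[2\pi b X,2\pi b^{-1}P]=-2\pi i\in 2\pi i\,\Z$, the Weyl cocycle is trivial and $e^{2\pi b X}$ strongly commutes with $e^{2\pi b^{-1}P}$; and, crucially using $b_i^2\in\R\setminus\Q$, the commutant inside $B(L^2(\R))$ of the quantum torus generated by $\{e^{2\pi b X},e^{2\pi b P}\}$ is exactly the dual torus generated by $\{e^{2\pi b^{-1}X},e^{2\pi b^{-1}P}\}$. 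Taking the tensor product over all conjugate pairs and invoking the commutation theorem for tensor products of von Neumann algebras, the commutant of the $b$-scale torus is the $b^{-1}$-scale torus, which by the transcendental relations of the Main Theorem is generated by the tilde operators, hence equals $\cN$.

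The hard part will be reconciling this clean computation for bounded Weyl operators with the von Neumann algebra actually generated by the unbounded Chevalley generators: I must show that $\cM$ has the same commutant as the full $b$-scale quantum torus into which the representation embeds. Concretely this amounts to recovering the individual torus variables from $E_i,F_i,K_i$ by measurable functional calculus and algebraic operations, so that any operator commuting with all of $\bU_{\fq}(\g_\R)$ is forced to commute with each torus generator. A secondary point requiring care is the uniform use of irrationality across long and short roots: because ${}^L\g$ interchanges them, one must track the normalizations $b_i$ versus $b_s$ so that the dual torus is neither too large — no spurious commutants, guaranteed by $b_i^2\notin\Q$ — nor too small — the tilde generators exhaust it, guaranteed by the transcendental relations. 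With both inclusions in hand one concludes $\cM'=\cN$, which is the assertion.
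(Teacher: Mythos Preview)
Your plan is considerably more ambitious than what the paper actually does. In this paper the theorem is not given an independent proof at all: immediately after the statement in the introduction the author writes ``This is just the restatement of (iii) of the positive representations,'' and in Section~\ref{sec:modified} the precise version (Theorem~\ref{Langlands}) is accompanied only by the remark that ``the proof of the following result is exactly the same as in the simply-laced case.'' So the paper's route is a pure reduction to~\cite{Ip}, together with the observation that the extra Cartan-type elements $\til[\bK^{\bb^k}]$ appearing in the commutant already lie in the algebra generated by the $\til[\bK_i]$ because the vectors $\bb^k$ come from the weight lattice.

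Your approach, routing the reverse inclusion through the $q$-tori embedding and the known commutant structure of irrational quantum tori, is a genuinely different argument. It is sound in outline, and you correctly isolate the one real difficulty: to pass from ``anything commuting with $\bU_\fq(\g_\R)$'' to ``anything commuting with the full $b$-scale torus'' you need the quantum group generators to generate the same von Neumann algebra as all $l(w_0)$ torus pairs. This is not automatic --- the embedding of Theorem~\ref{Thmqtori} is an algebraic injection, and the quantum group has only rank-many Chevalley generators against $l(w_0)$ torus pairs --- so the step ``recover the individual torus variables from $E_i,F_i,K_i$ by measurable functional calculus'' is where the whole weight of the argument lies, and you have not indicated how it would go. Note also that if $\cM$ is strictly smaller than the $b$-torus then $\cM'$ is strictly \emph{larger} than the $b^{-1}$-torus, so for your strategy to yield $\cM'=\cN$ you would in fact need both halves to generate the full respective tori; this is a stronger claim than the theorem itself. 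The paper (via~\cite{Ip}) sidesteps this by computing the commutant directly rather than sandwiching through the tori.
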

In fact, from the proof of Theorem \ref{ThmLL}, the correspondence of the commutant of the quantum groups corresponding to other forms in between the simply-connected and the adjoint forms can be deduced.

\begin{Thm}\label{Thmqtori}Let $s$ (resp. $l$) be the number of indices corresponding to short (resp. long) roots, so that $s+l=l(w_0)$. Then we have an embedding
\Eq{\bU_{\fq\til[\fq]}(\g_\R)\inj \C[\T_{\fq\til[\fq]}^{s,l}].}
of the modified modular double into the Laurent polynomials generated by $s$ $q_s$-tori and $l$ $q_l$-tori and their modular double counterparts. In particular each generator of $\bU_{\fq\til[\fq]}(\g_\R)$ is realized as a Laurent polynomial of the $q$-tori variables.
\end{Thm}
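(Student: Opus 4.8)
The plan is to read off the embedding directly from the explicit positive representation attached to a fixed reduced word for $w_0$, and then to establish injectivity by a PBW leading-term argument. First I would fix a ``good'' reduced expression $w_0 = s_{i_1}\cdots s_{i_N}$ with $N = l(w_0)$ and recall the positive representation on $L^2(\R^N)$ constructed in the previous sections. To the $k$-th letter I attach the self-adjoint operators $x_k$ and $p_k = \frac{1}{2\pi\i}\partial_{x_k}$ together with the unitaries $X_k := e^{2\pi b_{i_k}x_k}$ and $Y_k := e^{2\pi b_{i_k}p_k}$, which satisfy the Weyl relation $X_kY_k = q_{i_k}^2\,Y_kX_k$ and hence generate a single $q_{i_k}$-torus. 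The letters with $\a_{i_k}$ short produce $s$ copies of a $q_s$-torus and those with $\a_{i_k}$ long produce $l$ copies of a $q_l$-torus, which together constitute the $\fq$-part of $\C[\T_{\fq\til[\fq]}^{s,l}]$; the equality $s + l = N = l(w_0)$ is immediate since every letter is of exactly one type.

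Next I would check that the assignment extends to an algebra homomorphism. The explicit formulas show that every $E_i$ and $F_i$ is a Laurent polynomial, and every $K_i^{\pm1}$ a Laurent monomial, in the $X_k^{\pm1}, Y_k^{\pm1}$, so the image of $\bU_\fq(\g_\R)$ already lies in $\C[\T_{\fq\til[\fq]}^{s,l}]$. For the Langlands-dual factor $\bU_{\til[\fq]}({}^L\g_\R)$ I would invoke the Main Theorem (Theorem \ref{main}): the transcendental relations $\til[E_i] = (e_i)^{1/b_i^2}$, $\til[F_i] = (f_i)^{1/b_i^2}$, $\til[K_i] = (K_i)^{1/b_i^2}$ exhibit the tilde generators as the $1/b_{i_k}^2$-powers of the very same torus elements, that is, as words in the modular-double generators $\til[X_k] = X_k^{1/b_{i_k}^2}$, $\til[Y_k] = Y_k^{1/b_{i_k}^2}$, which satisfy $\til[X_k]\til[Y_k] = \til[q_{i_k}]^2\til[Y_k]\til[X_k]$ and supply the remaining half of $\C[\T_{\fq\til[\fq]}^{s,l}]$. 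Since the two factors commute by Theorem \ref{ThmLL}, this yields a well-defined homomorphism $\bU_{\fq\til[\fq]}(\g_\R)\to\C[\T_{\fq\til[\fq]}^{s,l}]$.

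The main obstacle is injectivity. I would grade $\C[\T_{\fq\til[\fq]}^{s,l}]$ by the multidegree in the torus variables and examine leading terms. By the PBW theorem the ordered monomials in the root vectors $E_{\beta_k}$, the Cartan elements, and the $F_{\beta_k}$ form a basis of $\bU_\fq(\g_\R)$; under the torus map each root vector becomes a Laurent polynomial whose top-degree component is a single monomial, and these leading monomials are distinct across the PBW basis, so a triangularity argument forces the images of the basis monomials to be linearly independent and gives injectivity on $\bU_\fq(\g_\R)$. The same argument applies to the tilde factor with $b_i$ replaced by $b_i^{-1}$; moreover, because $b^2\notin\Q$, the integral multidegrees coming from the $\fq$-part and the $b^{-2}$-scaled multidegrees coming from the $\til[\fq]$-part are rationally independent, so their leading monomials can never coincide and the combined map stays injective on all of $\bU_{\fq\til[\fq]}(\g_\R)$. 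The delicate point throughout is to pin down the leading monomial of each $E_{\beta_k}$ uniformly across types $B_n, C_n, F_4$ and $G_2$, and this is exactly where the chosen good reduced expression — obtained by folding from the simply-laced representations — carries out the necessary bookkeeping.
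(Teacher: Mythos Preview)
Your proposal has a genuine gap at the very first step. You write that ``the explicit formulas show that every $E_i$ and $F_i$ is a Laurent polynomial, and every $K_i^{\pm1}$ a Laurent monomial, in the $X_k^{\pm1}, Y_k^{\pm1}$'' where $X_k = e^{2\pi b_{i_k}x_k}$ and $Y_k = e^{2\pi b_{i_k}p_k}$. This is not true as stated. Unwinding Definition~\ref{notation}, a term such as $[w]e(-p_w)$ expands to $e^{\pi b w - 2\pi b p_w}+e^{-\pi b w - 2\pi b p_w}$, and the factors $e^{\pm\pi b w}$ are genuine \emph{square roots} $X_k^{\pm 1/2}$ of your torus generators; the same half-integer exponents occur throughout the formulas in Theorem~\ref{B2}* and Proposition~\ref{FH}*, and passing to the modified generators $\bE_i=q_i^{n_i}E_iK_i^{n_i}$ does not remove them. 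So the map you describe does not land in $\C[\T_{\fq\til[\fq]}^{s,l}]$ but in a larger ``half-torus'' algebra.

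The paper's proof deals with exactly this point and nothing else: it conjugates the representation by explicit unitary Gaussians $e^{\pi i\frac{b_k}{b_i}v_iv_k}$ and $e^{\frac{1}{2}\pi i v_i^2}$, chosen according to the Dynkin adjacency and the parity labels $n_{r(i)}$, whose effect is $2\pi b_i p_i\mapsto 2\pi b_i p_i+\pi b_k v_k$ (resp.\ $+\pi b_i v_i$). After this shift the stray $e^{\pm\pi b_j v_j}$ factors are absorbed into the $Y_k$'s and every generator becomes an honest Laurent polynomial in $\bu_k,\bv_k$. This unitary transformation is the entire content of the proof in the non--simply-laced case (the rest being deferred to the simply-laced paper), and it is precisely the step your argument skips.

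Your injectivity discussion via PBW leading terms is a separate, more detailed route than the paper takes (the paper simply invokes the simply-laced case). The outline is reasonable, but the key assertion that the leading torus monomials of the PBW root vectors $E_{\beta_k}$ are pairwise distinct is stated without verification; this requires an explicit check against the formulas for each type and cannot be waved through by appealing to folding alone.
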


\begin{Thm}\label{RW} The positive representations corresponding to the parameters $\l$ and $w(\l)$ are unitary equivalent,  where  $\l\in\R^n$, and $w\in W$ is a Weyl group element. In particular the positive representations are parametrized by $\l\in\R_{\geq 0}^n$.
\end{Thm}

The paper is organized as follows. In Section \ref{sec:prelim}, we fix some notations and recall the definition of $\cU_q(\g_\R)$ of general type, the definition of the quantum dilogarithm function, and Lusztig's parametrization of the positive unipotent semi-subgroup $U_{>0}^+$ of $G$. In Section \ref{sec:construct}, we give the general construction of the positive representations for $F_i$ and $K_i$, and the action of $E_i$ for a particular choice of $w_0$. In Section \ref{sec:B2}, we study in detail the positive representations of $\cU_q(\g_\R)$ where $\g$ is of type $B_2$, and describe the transformation needed to relate different reduced expression of $w_0$. In Section \ref{sec:posrep} and \ref{sec:G2} we give the explicit action of the positive representations of all type. In Section \ref{sec:folding} we describe the relations between the positive representations and folding of Dynkin diagram. In Section \ref{sec:langlands}, we prove the main theorem about the transcendental relations which is related to the Langlands dual quantum group. Finally in Section \ref{sec:modified} we introduce the modified quantum group $\bU_{\fq\til[\fq]}(\g_\R)$ and state the main theorems about the positive representations of the modular double, the Langlands dual as the commutant, and its embedding into the $q$-tori.

\textbf{Acknowledgments.} This work was partially supported by Yale University and World Premier International Research Center Initiative (WPI Initiative), MEXT, Japan.

\section{Preliminaries}\label{sec:prelim}
Throughout the paper, we will let $q=e^{\pi ib^2}$ with $0<b^2<1$ and $b\in\R\setminus\Q$.

\subsection{Definition of $\cU_q(\g_\R)$}\label{sec:Uq}

We recall the definition of the quantum group $\cU_q(\g_\R)$ where $\g$ is of general type \cite{CP,Lu}. 

\begin{Def} Let $I$ denote the set of nodes of the Dynkin diagram of $\g$, with the following labeling. Here the black nodes correspond to short roots, and white nodes correspond to long roots.

The Dynkin diagram for Type $B_n$ is given by
\begin{center}
  \begin{tikzpicture}[scale=.4]
    \draw[xshift=0 cm,thick,fill=black] (0 cm, 0) circle (.3 cm);
    \foreach \x in {1,...,5}
    \draw[xshift=\x cm,thick] (\x cm,0) circle (.3cm);
    \draw[dotted,thick] (8.3 cm,0) -- +(1.4 cm,0);
    \foreach \y in {1.15,...,3.15}
    \draw[xshift=\y cm,thick] (\y cm,0) -- +(1.4 cm,0);
    \draw[thick] (0.3 cm, .1 cm) -- +(1.4 cm,0);
    \draw[thick] (0.3 cm, -.1 cm) -- +(1.4 cm,0);
    \foreach \z in {1,...,5}
    \node at (2*\z-2,-1) {$\z$};
\node at (10,-1){$n$};
  \end{tikzpicture}
\end{center}
and the corresponding Cartan matrix is given by ($1\leq i,j\leq n$):
\Eq{a_{ij}=\case{2&i=j,\\-2 &(i,j)=(1,2),\\-1 &\mbox{$|i-j|=1$ and $(i,j)\neq (1,2),$}\\0&\mbox{ otherwise.}}}

The Dynkin diagram for Type $C_n$ is given by
\begin{center}
  \begin{tikzpicture}[scale=.4]
    \draw[xshift=0 cm,thick] (0 cm, 0) circle (.3 cm);
    \foreach \x in {1,...,5}
    \draw[xshift=\x cm,thick,fill=black] (\x cm,0) circle (.3cm);
    \draw[dotted,thick] (8.3 cm,0) -- +(1.4 cm,0);
    \foreach \y in {1.15,...,3.15}
    \draw[xshift=\y cm,thick] (\y cm,0) -- +(1.4 cm,0);
    \draw[thick] (0.3 cm, .1 cm) -- +(1.4 cm,0);
    \draw[thick] (0.3 cm, -.1 cm) -- +(1.4 cm,0);
    \foreach \z in {1,...,5}
    \node at (2*\z-2,-1) {$\z$};
\node at (10,-1){$n$};
  \end{tikzpicture}
\end{center}
and the corresponding Cartan matrix is given by ($1\leq i,j\leq n$):
\Eq{a_{ij}=\case{2&i=j,\\-2&(i,j)=(2,1),\\-1 &\mbox{$|i-j|=1$ and $(i,j)\neq (2,1)$},\\0&\mbox{ otherwise.}}}

The Dynkin diagram for Type $F_4$ is given by

\begin{center}
  \begin{tikzpicture}[scale=.4]
    \draw[thick] (-2 cm ,0) circle (.3 cm);
	\node at (-2,-1) {$1$};
    \draw[thick] (0 ,0) circle (.3 cm);
	\node at (0,-1) {$2$};
    \draw[thick,fill=black] (2 cm,0) circle (.3 cm);
	\node at (2,-1) {$3$};
    \draw[thick,fill=black] (4 cm,0) circle (.3 cm);
	\node at (4,-1) {$4$};
    \draw[thick] (15: 3mm) -- +(1.5 cm, 0);
    \draw[xshift=-2 cm,thick] (0: 3 mm) -- +(1.4 cm, 0);
    \draw[thick] (-15: 3 mm) -- +(1.5 cm, 0);
    \draw[xshift=2 cm,thick] (0: 3 mm) -- +(1.4 cm, 0);
  \end{tikzpicture}
\end{center}
and the corresponding Cartan matrix is given by
\Eq{A=(a_{ij})=\veca{2&-1&0&0\\-1&2&-1&0\\0&-2&2&-1\\0&0&-1&2}.}
where $1,2$ are short roots, $3,4$ are long roots.

The Dynkin diagram for Type $G_2$ is given by
\begin{center}
  \begin{tikzpicture}[scale=.4]
    \draw[thick] (0 ,0) circle (.3 cm);
	\node at (0,-1) {$1$};
    \draw[thick,fill=black] (2 cm,0) circle (.3 cm);
	\node at (2,-1) {$2$};
    \draw[thick] (30: 3mm) -- +(1.5 cm, 0);
    \draw[thick] (0: 3 mm) -- +(1.4 cm, 0);
    \draw[thick] (-30: 3 mm) -- +(1.5 cm, 0);
  \end{tikzpicture}
\end{center}
and the corresponding Cartan matrix is given by
\Eq{A=(a_{ij})=\veca{2&-1\\-3&2}.}
\end{Def}

\begin{Def} \label{qi} Let $(-,-)$ be the $W$-invariant inner product of the root lattice such that $(\a,\a)=2$ for long roots $\a$, where $W$ is the Weyl group of the Cartan datum. Let $\a_i$, $i\in I$ be the positive simple roots, and we define
\begin{eqnarray}
q_i=q^{\frac{1}{2}(\a_i,\a_i)}.
\end{eqnarray}
In the case when $\g$ is of type $B_n$, $C_n$ and $F_4$, we define $b_l=b$, and $b_s=\frac{b}{\sqrt{2}}$. with the following normalization:
\begin{eqnarray}
\label{qiBCF}q_i=\case{e^{\pi ib_l^2}=q&\mbox{$i$ is long root,}\\e^{\pi ib_s^2} =q^{\frac{1}{2}}&\mbox{$i$ is short root.}}
\end{eqnarray}
In the case when $\g$ is of type $G_2$, we define $b_l=b$, and $b_s=\frac{b}{\sqrt{3}}$ with the following normalization:
\begin{eqnarray}
\label{qiG}q_i=\case{e^{\pi ib_l^2}=q&\mbox{$i$ is long root,}\\e^{\pi ib_s^2} =q^{\frac{1}{3}}&\mbox{$i$ is short root.}}
\end{eqnarray}
\end{Def}

\begin{Def} Let $A=(a_{ij})$ denote the Cartan matrix. Then $\cU_q(\g_\R)$ with $q=e^{\pi ib_l^2}$ is generated by $E_i$, $F_i$ and $K_i^{\pm1}$, $i\in I$ subject to the following relations:
\begin{eqnarray}
K_iE_j&=&q_i^{a_{ij}}E_jK_i,\\
K_iF_j&=&q_i^{-a_{ij}}F_jK_i,\\
{[E_i,F_j]} &=& \d_{ij}\frac{K_i-K_i\inv}{q_i-q_i\inv},
\end{eqnarray}
together with the Serre relations for $i\neq j$:
\begin{eqnarray}
\sum_{n=0}^{1-a_{ij}}(-1)^n\frac{[1-a_{ij}]_{q_i}!}{[1-a_{ij}-n]_{q_i}![n]_{q_i}!}E_i^{n}E_jE_i^{1-a_{ij}-n}&=&0,\label{SerreE}\\
\sum_{n=0}^{1-a_{ij}}(-1)^n\frac{[1-a_{ij}]_{q_i}!}{[1-a_{ij}-n]_{q_i}![n]_{q_i}!}F_i^{n}F_jF_i^{1-a_{ij}-n}&=&0,\label{SerreF}
\end{eqnarray}
where $[n]_q=\frac{q^n-q^{-n}}{q-q\inv}$. We also define formally the elements $H_i$ so that $K_i=q_i^{H_i}$.
\end{Def}

\subsection{Quantum dilogarithm}\label{sec:qdlog}
Let us briefly recall the definition and some properties of the quantum dilogarithm functions \cite{Ip}. Let $Q:=b+b\inv$.

\begin{Def} The quantum dilogarithm function $G_b(x)$ is defined on\\ ${0\leq Re(z)\leq Q}$ by
\Eq{\label{intform} G_b(x)=\over[\ze_b]\exp\left(-\int_{\W}\frac{e^{\pi tz}}{(e^{\pi bt}-1)(e^{\pi b\inv t}-1)}\frac{dt}{t}\right),}
where \Eq{\ze_b=e^{\frac{\pi i}{2}(\frac{b^2+b^{-2}}{6}+\frac{1}{2})},}
and the contour goes along $\R$ with a small semicircle going above the pole at $t=0$. This can be extended meromorphically to the whole complex plane.
\end{Def}
\begin{Def} The function $g_b(x)$ is defined by
\Eq{g_b(x)=\frac{\over[\ze_b]}{G_b(\frac{Q}{2}+\frac{\log x}{2\pi ib})}.}
where $\log$ takes the principal branch of $x$.
\end{Def}
The function $g_b(x)$, also called the quantum dilogarithm, is the crucial tool for all the transformations between self-adjoint operators. In particular it gives the unitary transformation that relates the positive representations corresponding to different expressions of the longest element. In particular, we will need the following two properties of $g_b(x)$.

\begin{Lem}\cite{BT}\label{unitary} $|g_b(x)|=1$ when $x\in\R_+$, hence $g_b(X)$ is a unitary operator for any positive operator $X$.
\end{Lem}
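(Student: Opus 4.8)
The plan is to reduce the statement to the single fact that $|G_b(z)| = 1$ whenever $z$ lies on the central line $\mathrm{Re}(z) = Q/2$ (here $Q = b + b^{-1}$), and then to transfer this to $g_b$ and to the operator $g_b(X)$ by the spectral theorem. Indeed, for $x \in \R_+$ the argument $z := \frac{Q}{2} + \frac{\log x}{2\pi i b}$ satisfies $\mathrm{Re}(z) = Q/2$, since $\log x \in \R$ makes $\frac{\log x}{2\pi i b}$ purely imaginary; moreover $|\overline{\zeta_b}| = 1$ because the exponent defining $\zeta_b$ is $i$ times a real quantity for real $b$. Thus once $|G_b(z)| = 1$ on the central line is established, $|g_b(x)| = |\overline{\zeta_b}|/|G_b(z)| = 1$ follows at once.

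To prove $|G_b(z)| = 1$ for $z = \frac{Q}{2} + it$, $t \in \R$, I would work directly from the integral representation. Writing $G_b(z) = \overline{\zeta_b}\, e^{-F(z)}$ with
\[
F(z) = \int_\Omega \frac{e^{\pi \tau z}}{(e^{\pi b\tau} - 1)(e^{\pi b^{-1}\tau} - 1)}\frac{d\tau}{\tau},
\]
and using $|\overline{\zeta_b}| = 1$, one gets $|G_b(z)|^2 = e^{-(F(z) + \overline{F(z)})} = e^{-2\,\mathrm{Re}\,F(z)}$, so it suffices to show $\mathrm{Re}\,F(z) = 0$ on the central line. Since $b$ is real and the integration variable $\tau$ runs along the real axis, complex conjugation replaces $z$ by $\overline{z}$ in the integrand and reflects the contour $\Omega$ (which passes above the pole at $\tau = 0$) into the contour $\overline{\Omega}$ passing below it. On the central line $\overline{z} = Q - z$, so $\overline{F(z)} = \int_{\overline{\Omega}} \frac{e^{\pi\tau(Q - z)}}{(e^{\pi b\tau}-1)(e^{\pi b^{-1}\tau}-1)}\frac{d\tau}{\tau}$.

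The key step is the substitution $\tau \mapsto -\tau$. Under it the denominator obeys the elementary identity
\[
(e^{-\pi b\tau}-1)(e^{-\pi b^{-1}\tau}-1) = e^{-\pi Q\tau}(e^{\pi b\tau}-1)(e^{\pi b^{-1}\tau}-1),
\]
whose factor $e^{-\pi Q\tau}$ cancels exactly against $e^{\pi\tau(Q-z)}$ to leave $e^{\pi\tau z}$; simultaneously the below-the-pole contour $\overline{\Omega}$ is carried back to the above-the-pole contour $\Omega$ with reversed orientation. Tracking the signs — the factors $1/\tau$ and $d\tau$ under $\tau \mapsto -\tau$ cancel, while the reversal of orientation contributes an overall minus — yields $\overline{F(z)} = -F(z)$, hence $\mathrm{Re}\,F(z) = 0$ and $|G_b(z)| = 1$. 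Finally, for a positive self-adjoint operator $X$ with spectral resolution $X = \int_{\R_+} \lambda\, dE(\lambda)$, the bounded Borel function $g_b$ of modulus one gives $g_b(X)^* g_b(X) = \int_{\R_+} |g_b(\lambda)|^2\, dE(\lambda) = I = g_b(X)\, g_b(X)^*$, so $g_b(X)$ is unitary.

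The main obstacle I anticipate is purely the careful bookkeeping of the contour: conjugation moves the semicircle from above the pole to below it, while the substitution $\tau \mapsto -\tau$ moves it back above and reverses orientation, and one must confirm that these two operations compose to leave the contour genuinely at $\Omega$, so that \emph{no} residue at $\tau = 0$ is picked up, rather than naively invoking a reflection formula that would introduce one. Verifying this, together with the denominator identity and the convergence of the integral in the open strip $0 < \mathrm{Re}(z) < Q$, is the only real content; the passage to operators is then immediate from the spectral theorem.
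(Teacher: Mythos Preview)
Your argument is correct. The paper itself does not supply a proof of this lemma: it is stated as a recalled property of the quantum dilogarithm, with the details deferred to the reference \cite{Ip}. What you have written is precisely the standard direct verification from the integral representation \eqref{intform}: the key identity $(e^{-\pi b\tau}-1)(e^{-\pi b^{-1}\tau}-1)=e^{-\pi Q\tau}(e^{\pi b\tau}-1)(e^{\pi b^{-1}\tau}-1)$ combined with the substitution $\tau\mapsto -\tau$ shows that $F(z)$ is purely imaginary on the line $\mathrm{Re}(z)=Q/2$, and your contour bookkeeping (conjugation sends $\Omega$ below the pole, $\tau\mapsto-\tau$ sends it back above with reversed orientation, so no residue is picked up) is accurate. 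The passage to the operator statement via the Borel functional calculus is then immediate, as you note.
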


\begin{Lem}\cite{BT}\label{qsum}Let $u,v$ be positive essentially self-adjoint operators. If $uv=q^2vu$, then
\begin{eqnarray}
\label{qsum1}g_b(u)^*vg_b(u) &=& q\inv uv+v,\\
\label{qsum2}g_b(v)ug_b(v)^*&=&u+q\inv uv.
\end{eqnarray}
If $uv=q^4vu$, then we apply the Lemma twice and obtain
\begin{eqnarray}
\label{qsum3}g_b(u)^*vg_b(u)&=&v+[2]_qq^{2}vu+q^{4}vu^2,\\
\label{qsum4}g_b(v)ug_b(v)^*&=&u+[2]_qq^{-2}uv+q^{-4}uv^2.
\end{eqnarray}
More generally, if $uv=q^{2n}vu$, then
\begin{eqnarray}
g_b(u)^*vg_b(u)&=&\sum_{k=0}^{n} \frac{[n]_q!}{[n-k]_q![k]_q!} q^kvu^{2k},\\
g_b(v)ug_b(v)^*&=&\sum_{k=0}^{n} \frac{[n]_q!}{[n-k]_q![k]_q!} q^{-k}uv^{2k}.\\
\end{eqnarray}
\end{Lem}

As a consequence of the above Lemma, we also have Volkov's magic lemma:
\begin{Lem}\cite{Vo}\label{qbi} If $uv=q^2vu$ where $u,v$ are positive essentially self-adjoint operators, then $u+v$ is also a positive essentially self-adjoint operator, and
\Eq{\label{qbiEq} (u+v)^{\frac{1}{b^2}}=u^{\frac{1}{b^2}}+v^{\frac{1}{b^2}}.}
\end{Lem}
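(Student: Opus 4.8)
The plan is to reduce \eqref{qbiEq} to the canonical Schr\"odinger pair and then to exploit the $b\leftrightarrow b^{-1}$ self-duality of the quantum dilogarithm. First I would make the hypothesis precise: a pair of positive essentially self-adjoint operators with $uv=q^2vu$ is understood through its one-parameter unitary groups, which satisfy the Weyl commutation relations. Writing $u=e^{2\pi b U}$ and $v=e^{2\pi b V}$ with $U,V$ self-adjoint, the relation $uv=q^2vu$ is the integrated form of the Heisenberg relation $[U,V]=\frac{1}{2\pi i}$, so by the Stone--von Neumann theorem (allowing multiplicity) the pair $(u,v)$ is unitarily equivalent to a direct integral of copies of the model pair
\[
u_0=e^{2\pi b x},\qquad v_0=e^{2\pi b p},\qquad p=\frac{1}{2\pi i}\frac{d}{dx},
\]
on $L^2(\R)$. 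Since both sides of \eqref{qbiEq} are built functorially from $u$ and $v$ and commute with the intertwining unitary, it suffices to prove the statement for the model pair. The same reduction, applied with $b$ replaced by $b^{-1}$, shows the right-hand side is meaningful: formally $u^{1/b^2}v^{1/b^2}=q^{2/b^4}v^{1/b^2}u^{1/b^2}=\tilde q^2\,v^{1/b^2}u^{1/b^2}$ with $\tilde q=e^{\pi i/b^2}$, so $u^{1/b^2}$ and $v^{1/b^2}$ form a pair of the same type and $u^{1/b^2}+v^{1/b^2}$ is again positive.

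For the model pair I would establish two things. The positivity and essential self-adjointness of $u_0+v_0=e^{2\pi b x}+e^{2\pi b p}$ is the assertion that the modular (self-dual) quantum-dilogarithm operator is positive self-adjoint on a natural core; this is known (it is the one-particle relativistic Liouville/Toda Hamiltonian, cf.\ \cite{Fa}), and transporting it back through the intertwiner yields the first claim of the Lemma, that $u+v$ is positive essentially self-adjoint. The power identity $(u_0+v_0)^{1/b^2}=u_0^{1/b^2}+v_0^{1/b^2}$ is the crux. I would diagonalize $H:=u_0+v_0$ explicitly: its generalized eigenfunctions $\psi_\lambda$ are built from $G_b$, and one checks $H\psi_\lambda=\lambda\psi_\lambda$ on a continuous spectrum.

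The decisive step is then the modular invariance $G_b=G_{b^{-1}}$, which forces the \emph{same} family $\psi_\lambda$ to diagonalize the dual operator $\tilde H:=u_0^{1/b^2}+v_0^{1/b^2}=e^{2\pi x/b}+e^{2\pi p/b}$, now with eigenvalue $\lambda^{1/b^2}$. Since $H$ and $\tilde H$ share a complete system of generalized eigenfunctions with spectral parameters matched by $\lambda\mapsto\lambda^{1/b^2}$, the spectral theorem gives $\tilde H=H^{1/b^2}$, which is exactly \eqref{qbiEq}. Undoing the Stone--von Neumann intertwiner then yields the identity for the original operators.

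I expect the main obstacle to be precisely this diagonalization together with the verification that the dual eigenvalue is \emph{exactly} $\lambda^{1/b^2}$; matching the spectral parameters under $b\to b^{-1}$ is where the self-duality of $G_b$ does the real work, and one must be careful that the eigenfunctions genuinely furnish a complete continuous spectral resolution and that passing to the $1/b^2$-power commutes with the direct-integral decomposition. A tempting shortcut is to use the quantum dilogarithm as an intertwiner via Lemma \ref{qsum} and Lemma \ref{unitary}, but conjugation by $g_b$ produces $q$-twisted combinations such as $u+q^{-1}uv$ rather than the untwisted sum $u+v$, and the twist $q^{-1}$ cannot be absorbed by rescaling positive operators; so this route appears still to require the same spectral input.
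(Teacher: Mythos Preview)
Your spectral-decomposition strategy would work, but it is heavier than necessary, and your final paragraph dismisses precisely the short argument the paper has in mind: Lemma~\ref{qbi} is stated as a direct consequence of Lemma~\ref{qsum}, and the ``$q$-twist'' you regard as an obstruction is not one.

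After writing $u=e^{2\pi bU}$, $v=e^{2\pi bV}$ as you do, set $w:=e^{2\pi b(U-V)}$, a genuine positive self-adjoint operator with $wv=q^{2}vw$. Applying \eqref{qsum1} to the pair $(w,v)$ gives
\[
g_b(w)^{*}\,v\,g_b(w)\;=\;q^{-1}wv+v\;=\;e^{2\pi bU}+e^{2\pi bV}\;=\;u+v,
\]
since by Baker--Campbell--Hausdorff $q^{-1}wv=e^{2\pi bU}$. Thus the Weyl-ordered product $q^{-1}wv$ \emph{is} the positive operator $u$; the twist is absorbed not by rescaling $u$ or $v$ but by feeding $g_b$ the argument $w$ rather than $u$ itself. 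Lemma~\ref{unitary} then gives positivity and essential self-adjointness of $u+v$ for free. For the power identity, the $b\leftrightarrow b^{-1}$ symmetry of $G_b$, $Q$, and $\zeta_b$ yields
\[
g_b\bigl(e^{2\pi b(U-V)}\bigr)=g_{b^{-1}}\bigl(e^{2\pi b^{-1}(U-V)}\bigr),
\]
so that
\[
(u+v)^{1/b^{2}}=g_b(w)^{*}\,v^{1/b^{2}}\,g_b(w)=g_{b^{-1}}\bigl(w^{1/b^{2}}\bigr)^{*}\,v^{1/b^{2}}\,g_{b^{-1}}\bigl(w^{1/b^{2}}\bigr)=u^{1/b^{2}}+v^{1/b^{2}},
\]
the last step being \eqref{qsum1} again with $b$ replaced by $b^{-1}$. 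That is the entire proof. Your eigenfunction program is really this conjugation unpacked: the generalized eigenfunctions of $u+v$ are $g_b(w)^{*}$ applied to the plane-wave eigenfunctions of $v$, and the eigenvalue map $\lambda\mapsto\lambda^{1/b^{2}}$ you need follows immediately because the conjugating unitary is literally the same for $b$ and for $b^{-1}$.
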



\subsection{Lusztig's data}\label{sec:Ldata}
The following are described in detail in \cite{Lu2}. For simplicity, let $G$ be a simple Lie group. Recall that for any simple root $\a_i\in \D$ there exists a homomorphism $SL_2(\R)\to G$ denoted by
\begin{eqnarray}
\veca{1&a\\0&1}&\mapsto& x_i(a)\in U_i^+,\\
\veca{b&0\\0&b\inv}&\mapsto &\chi_i(b)\in T,\\
\veca{1&0\\c&1}&\mapsto &y_i(c)\in U_i^-,
\end{eqnarray}
called the \emph{pinning} of $G$, where $T$ is the split real maximal torus of $G$, and $U_i^+$ and $U_i^-$ are the simple root subgroups of $U^+$ and $U^-$ respectively. Then the positive unipotent semigroup $U_{>0}^+$ is defined by the image of the map $\R_{>0}^m\to U^+$ given by
\Eq{\label{w0coord}(a_1,a_2,...,a_m)\mapsto x_{i_1}(a_1)x_{i_2}(a_2)...x_{i_n}(a_m),}
where $s_{i_1}s_{i_2}...s_{i_m}$ is a reduced expression for the longest element $w_0$ of the Weyl group $W$. We define $U_{>0}^-$ in a similar way.

\begin{Lem}\cite{Lu2} We have the following identities:
\begin{eqnarray}
\label{EK}\chi_i(b)x_i(a)&=&x_i(b^2 a)\chi_i(b),\\
\label{EF}x_i(a)y_j(c)&=&y_j(c)x_i(a)\tab \mbox{ if $i\neq j$},\\
\label{EKF}x_i(a)\chi_i(b)y_i(c)&=&y_i(\frac{c}{ac+b^2})\chi_i(\frac{ac+b^2}{b})x_i(\frac{a}{ac+b^2}).
\end{eqnarray}

In the simply-laced case, assume the roots $\a_i$ and $\a_j$ are joined by an edge in the Dynkin diagram. Then we have

\begin{eqnarray}
\label{EK2}\chi_i(b)x_j(a)&=&x_j(b\inv a)\chi_i(b),\\
\label{121}x_i(a)x_j(b)x_i(c)&=&x_j(\frac{bc}{a+c})x_i(a+c)x_j(\frac{ab}{a+c}).
\end{eqnarray}
\end{Lem}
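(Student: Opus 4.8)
The plan is to view every one of these identities as the image, under the pinning homomorphisms, of an elementary identity in a subgroup of rank one or two, so that the whole Lemma reduces to finite matrix computations together with two standard structural facts about root subgroups. Write $\phi_i\colon SL_2(\R)\to G$ for the homomorphism defining the pinning, so that $x_i(a)=\phi_i\left(\begin{smallmatrix}1&a\\0&1\end{smallmatrix}\right)$, $\chi_i(b)=\phi_i\left(\begin{smallmatrix}b&0\\0&b\inv\end{smallmatrix}\right)$ and $y_i(c)=\phi_i\left(\begin{smallmatrix}1&0\\c&1\end{smallmatrix}\right)$.

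The single-index relations (\ref{EK}) and (\ref{EKF}) involve only $\phi_i$, so they follow by applying $\phi_i$ to equalities in $SL_2(\R)$. For (\ref{EK}) one checks directly that $\left(\begin{smallmatrix}b&0\\0&b\inv\end{smallmatrix}\right)\left(\begin{smallmatrix}1&a\\0&1\end{smallmatrix}\right)=\left(\begin{smallmatrix}1&b^2a\\0&1\end{smallmatrix}\right)\left(\begin{smallmatrix}b&0\\0&b\inv\end{smallmatrix}\right)$. For (\ref{EKF}) one multiplies out $\left(\begin{smallmatrix}1&a\\0&1\end{smallmatrix}\right)\left(\begin{smallmatrix}b&0\\0&b\inv\end{smallmatrix}\right)\left(\begin{smallmatrix}1&0\\c&1\end{smallmatrix}\right)=\left(\begin{smallmatrix}(b^2+ac)/b&a/b\\c/b&1/b\end{smallmatrix}\right)$ and reads off its Gauss (lower-diagonal-upper) factorization $y_i\chi_i x_i$, which produces exactly the parameters $c/(ac+b^2)$, $(ac+b^2)/b$ and $a/(ac+b^2)$. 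Since $\phi_i$ is a homomorphism, both identities lift verbatim to $G$.

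For the mixed commutation (\ref{EF}) I would argue at the level of root subgroups. By the Chevalley commutator formula $[x_i(a),y_j(c)]$ lies in the group generated by the root subgroups $U_{m\a_i-n\a_j}$ with $m,n\ge1$; but for $i\neq j$ no such $m\a_i-n\a_j$ is a root, since a root has simple-root coordinates all $\ge0$ or all $\le0$ whereas this vector is positive in the $\a_i$-coordinate and negative in the $\a_j$-coordinate. Hence the commutator is trivial and the two elements commute. The torus relation (\ref{EK2}) is the conjugation formula $\chi_i(b)x_j(a)\chi_i(b)\inv=x_j\!\left(b^{\langle\a_j,\a_i^\vee\rangle}a\right)$; for adjacent simply-laced roots $\langle\a_j,\a_i^\vee\rangle=a_{ij}=-1$ gives the exponent $b\inv$, and the same formula with exponent $\langle\a_i,\a_i^\vee\rangle=2$ reproves (\ref{EK}).

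The braid-type identity (\ref{121}) is the only genuinely two-index statement. I would pass to the rank-two Levi subgroup generated by $\a_i,\a_j$, which under the simply-laced hypothesis is of type $A_2$, and represent it in $SL_3$ by $x_i(a)=I+aE_{12}$ and $x_j(b)=I+bE_{23}$, where $E_{kl}$ are the matrix units. A direct product gives $x_i(a)x_j(b)x_i(c)=\left(\begin{smallmatrix}1&a+c&ab\\0&1&b\\0&0&1\end{smallmatrix}\right)$, and equating with $x_j(p)x_i(s)x_j(r)=\left(\begin{smallmatrix}1&s&sr\\0&1&p+r\\0&0&1\end{smallmatrix}\right)$ forces $s=a+c$, $p+r=b$, $sr=ab$, whose unique solution $p=bc/(a+c)$, $r=ab/(a+c)$ is precisely the claimed right-hand side. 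I expect this step to be the main obstacle, though conceptual rather than computational: one must know that the abstract rank-two subgroup is genuinely $A_2$ with the standard $SL_3$ pinning, so that all structure constants are $\pm1$ and no extra signs intervene. This is exactly the normalization fixed in Lusztig's setup \cite{Lu2}, and once it is granted the $3\times3$ computation closes the proof.
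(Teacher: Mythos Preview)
The paper states this Lemma without proof, treating all five identities as standard facts about pinnings drawn from Lusztig's setup \cite{Lu2}. Your argument is correct and supplies exactly the details the paper omits: reducing the one-index relations to $SL_2$, the commutation \eqref{EF} to the Chevalley commutator formula, the torus conjugation \eqref{EK2} to the weight pairing, and the braid relation \eqref{121} to a $3\times3$ computation in the $A_2$ Levi. So there is nothing to compare against; your write-up simply fills in what the paper leaves to the reader.
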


\section{Construction of positive representations}\label{sec:construct}
Let us recall the classical construction.
\begin{Prop} Let $\C[U_{>0}^+]$ be the space of continuous functions on the positive unipotent semigroup $U_{>0}^+$ defined in \eqref{w0coord}. The minimal principal series representation for $\cU(\g_\R)$ can be realized as the infinitesimal action of $g\in G_\R$ acting on $\C[U_{>0}^+]$ by
\Eq{\label{groupaction}g\cdot f(h) =\chi_\l(hg) f([hg]_+).}
Here we write the Gauss decomposition of $g$ as
\Eq{g=g_-g_0g_+\in U_{>0}^-T_{>0}U_{>0}^+,}
so that $[g]_+=g_+$ is the projection of $g$ onto $U_{>0}^+$, and $\chi_\l(g)$ is the character function defined by
\Eq{\chi_\l(g)=\prod_{i=1}^n u_i^{2\l_i},}
where $n$ is the rank of $\g_\R$, $\l=(\l_i)\in \C^n$ and $u_i=\chi_i\inv(g_0)\in T_{>0}$. (One can also treat $\l:=\sum _{i=1}^n\l_i\a_i^{\vee}\in\fh^*$ where $\a_i^{\vee}$ are the dual coroots.)
\end{Prop}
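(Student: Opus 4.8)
The plan is to recognize the stated formula as nothing more than the restriction to the big Bruhat cell of the right regular action on the induced module $\mathrm{Ind}_{B^-}^{G_\R}\chi_\l$, and then to differentiate. Concretely, I would model the minimal principal series as the space of functions $F:G_\R\to\C$ satisfying the left equivariance $F(u_-t\,g)=\chi_\l(t)F(g)$ for $u_-\in U_{>0}^-$, $t\in T_{>0}$, with $G_\R$ acting by right translation $(R_gF)(x)=F(xg)$. Since the big cell $U_{>0}^-T_{>0}U_{>0}^+$ is open and dense and is isomorphic as a variety to the product $U_{>0}^-\times T_{>0}\times U_{>0}^+$, the restriction $f:=F|_{U_{>0}^+}\in\C[U_{>0}^+]$ determines $F$ on the big cell via $F(u_-th)=\chi_\l(t)f(h)$, and conversely every $f$ extends. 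Transporting $R_g$ through this identification is then a direct computation: for $h\in U_{>0}^+$ write the Gauss decomposition $hg=(hg)_-(hg)_0(hg)_+$, so that $(R_gF)(h)=F(hg)=\chi_\l((hg)_0)f((hg)_+)=\chi_\l(hg)f([hg]_+)$, which is exactly the asserted action.

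The first thing to verify is that this really is an action, i.e. $(g_1g_2)\cdot f=g_1\cdot(g_2\cdot f)$, and this is the algebraic heart of the statement. The required cocycle identity reduces to two facts about Gauss decompositions: that $[[hg_1]_+g_2]_+=[hg_1g_2]_+$ and that $\chi_\l(hg_1)\,\chi_\l([hg_1]_+g_2)=\chi_\l(hg_1g_2)$. Both follow by inserting $hg_1=(hg_1)_-(hg_1)_0(hg_1)_+$ into $hg_1g_2$ and commuting the torus factor $(hg_1)_0$ past the lower-unipotent part of $(hg_1)_+g_2$, using that $T_{>0}$ normalizes $U_{>0}^-$; one then reads off that the upper-unipotent part of $hg_1g_2$ equals that of $(hg_1)_+g_2$ and that the torus part factors as $(hg_1)_0\cdot[(hg_1)_+g_2]_0$. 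Multiplicativity of $\chi_\l$ as a character of $T_{>0}$ gives the second identity. This is precisely the bookkeeping analogous to the pinning relations \eqref{EK}--\eqref{EKF}.

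With the action established, the identification with the minimal principal series is immediate: $\C[U_{>0}^+]$ with this $G_\R$-action is the big-cell model of $\mathrm{Ind}_{B^-}^{G_\R}\chi_\l$, and the minimal principal series module for $\cU(\g_\R)$ is by definition its infinitesimal (derived) action. Thus the last step is to set $g=\exp(tX)$ for $X\in\g_\R$ and differentiate at $t=0$: for fixed $h$ in the open big cell, $hg$ stays in the big cell for small $t$, so $[hg]_+$ and $\chi_\l(hg)$ are smooth in $t$, and the derivative produces a first-order differential operator on $\C[U_{>0}^+]$. Extending multiplicatively gives the $\cU(\g_\R)$-module structure, and the explicit operators for $E_i,F_i,H_i$ are obtained by specializing $X$ to the Chevalley generators and using the pinning relations to compute the variation of the Gauss decomposition.

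The main obstacle is the partial, rather than global, nature of the Gauss decomposition: $[hg]_+$ is only defined when $hg$ lies in the big cell, so the formula does not a priori give a genuine global action of $G_\R$ on all of $\C[U_{>0}^+]$. This is exactly why the statement is phrased at the infinitesimal level — for the Lie-algebra action one only needs $hg$ in the big cell for $g$ near the identity, which is automatic by openness — and it is the reason one works with the derived representation rather than attempting to integrate it. A secondary point to handle carefully is the precise normalization of $\chi_\l$ (the factor $2\l_i$ and the interpretation of $u_i=\chi_i\inv(g_0)$ through the coordinates dual to the coroots), which selects which member of the family parametrized by $\l\in\C^r$ one obtains; this fixes the eigenvalues of the Cartan operators but not the identification itself.
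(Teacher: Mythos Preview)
Your argument is correct and is the standard proof. Note, however, that the paper does not supply a proof of this proposition at all: it is introduced with ``Let us recall the classical construction'' and is treated as a known fact from the general theory of induced representations. So there is no comparison to make with the paper's own argument --- you have written out precisely the verification that the paper omits.

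A couple of minor remarks on your write-up. First, you say the two cocycle identities follow by ``commuting the torus factor $(hg_1)_0$ past the lower-unipotent part of $(hg_1)_+g_2$''; this is fine, but note that $T_{>0}$ normalizes both $U_{>0}^-$ and $U_{>0}^+$, and it is really the normalization of $U_{>0}^+$ (together with $\chi_\l$ being a character) that gives $\chi_\l(hg_1)\chi_\l([hg_1]_+g_2)=\chi_\l(hg_1g_2)$. Second, your observation about the partial nature of the Gauss decomposition is exactly right and is the correct justification for why the statement is phrased infinitesimally; in the paper this point is implicit in the phrase ``infinitesimal action'' and is never discussed. Otherwise your proposal is complete.
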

Following \cite{Ip}, we will use the formal Mellin transformation of the form
\Eq{\label{MellinTransform}f(u):=\int F(x)x^u dx}
 on each variable, which transforms differential operators on $F(x)$ into finite difference operators on $f(u)$. Using this technique, the positive representations in the simply-laced case are constructed in \cite{Ip}. We extend the construction to all types as follows.

\begin{Def}\label{labelling}Let us denote the Lusztig's coordinates of $U_{>0}^+$ given in Section \ref{sec:Ldata} by $x_i^k$, where $i$ is the corresponding root index, and $k$ denotes the sequence this root is appearing in $w_0$ from the right. Similarly we denote by $u_i^k$ the Mellin transformed variables. We will also denote the Mellin transformed variables by $v_i$, $1\leq i\leq l(w_0)$ counting from the left, and let $v(i,k)$ be the index such that $u_i^k=v_{v(i,k)}$.
\end{Def}

\begin{Ex} The coordinates for $A_3$ corresponding to $w_0=s_3s_2s_1s_3s_2s_3$ is given by
$$(u_3^3,u_2^2,u_1^1,u_3^2, u_2^1,u_3^1)=(v_1,v_2,v_3,v_4,v_5,v_6)$$
\end{Ex} 

Let $q_i=e^{\pi ib_i^2}$ and $Q_i=b_i+b_i\inv$ (cf. Definition \ref{qi}). We define the quantized action from the classical Mellin transformed action with the appropriate $q_i$-number. The quantized actions will then be unbounded, positive essentially self-adjoint operators acting on the Hilbert space $L^2(\R^{dim(U^+)})$.

\begin{Def} \label{E}Choose the reduced expression for $w_0=w_{l-1}s_i$ where $w_{l-1}$ is the reduced expression for $ws_i$. Then the classical (Mellin transformed) action of $E_i$ is given by
\Eq{E_i:f\mapsto (u_i^1+1)f(u_i^1+1),}
and we define the positive quantized action by
\begin{eqnarray}
E_i&=&\left[\frac{Q_i}{2b_i}-\frac{i}{b}u_i^1\right]_{q_i}e^{-2\pi bp_i^1}.
\end{eqnarray}
\end{Def}

For the operators $F_i$ and $H_i$, we slightly modify the action from \cite{Ip} so that the following hold for $\g$ of all type with Cartan matrix $(a_{ij})$.

\begin{Def}\label{FH}Let $r(j)$ be the root corresponding to the variable $v_j$. For any reduced expression $w_0$, the classical Mellin transformed action is given by
\Eq{F_i:f\mapsto\sum_{k=1}^n\left(1-\sum_{j=1}^{v(i,k)-1} a_{i,r(j)}v_j-u_i^k+2\l_i\right)f(u_i^k-1)}
and the positive quantized action is given by (with $\l_i\in\R$):
\begin{eqnarray}
F_i&=&\sum_{k=1}^n\left[\frac{Q_i}{2b_i}+\frac{i}{b}\left(\sum_{j=1}^{v(i,k)-1} a_{i,r(j)}v_j+u_i^k+2\l_i\right)\right]_{q_i}e^{2\pi bp_{i}^{k}}\label{FF}.
\end{eqnarray}

The classical Mellin transformed action of $H_i$ is multiplication by
\Eq{\label{HH}H_i= \sum_{j=1}^{l(w_0)} -a_{i,r(j)}v_j-2\l_i,}
and the quantized action (after rescaling) is given by
\Eq{\label{KK}K_i=q_i^{H_i}=e^{-\pi b_i(\sum_{k=1}^{l(w_0)} a_{i,r(k)}v_k+2\l_i)}.}
\end{Def}

From these definitions, for each fixed choice of expression of $w_0$, the commutation relations between $E_i, F_i$ and $K_i$ are easily checked. The Serre relations between $E_i$ will be given by the explicit rank 2 expressions in the next section and the unitary transformations of operators. For the action of $F_i$ we have:
\begin{Thm}\label{Serre} The action of $F_i$ defined above satisfy the quantum Serre relations \eqref{SerreF}.
\end{Thm}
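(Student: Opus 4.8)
The plan is to prove that the operators $F_i$ defined in Proposition \ref{FH} satisfy the quantum Serre relations \eqref{SerreF}. Since the $F_i$ are represented as sums of exponentials of the Mellin-transformed momenta $p_i^k$ times $q_i$-number prefactors, my first step is to understand the commutation structure among the individual summands. The key observation is that each summand has the form of a $q$-number $\left[\cdots\right]_{q_i}$ multiplied by a shift operator $e^{2\pi b p_i^k}$, and two such exponentials $e^{2\pi b p_i^k}$ and $e^{2\pi b p_j^l}$ Weyl-commute up to a scalar determined by the canonical commutation relations between the $v$'s and $p$'s. Thus the whole computation reduces to the algebra generated by positive operators obeying relations of the type $uv = q^c vu$.

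\medskip
\noindent
\textbf{Reduction to two-term relations.}
For a fixed pair $i \neq j$, the Serre relation only involves $F_i$ and $F_j$, so I would first isolate the relevant variables. The strategy is to show that modulo the exact $q$-commutation relations, the operator $F_i$ behaves like a single ``screening-type'' operator. A crucial simplification: when one conjugates $F_i$ by shift operators, the linear combination $\sum_{j} a_{i,r(j)} v_j + u_i^k + 2\l_i$ inside the $q$-number transforms additively, and the prefactor $q$-numbers interact with the shifts $e^{2\pi b p_j^l}$ precisely so that the sum over $k$ assembles into something with controlled commutation. I would verify that the combined operator $F_i$ satisfies a clean relation of the form $F_i F_j = q_i^{a_{ij}} F_j F_i + (\text{lower terms})$, or more precisely that the pair $(F_i, F_j)$ generates a $q$-deformed Heisenberg-type structure governed by the Cartan integer $a_{ij}$.

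\medskip
\noindent
\textbf{Verifying the Serre identity.}
Once the two-generator commutation structure is pinned down, the Serre relation \eqref{SerreF} becomes a purely algebraic identity in a $q$-Weyl algebra. For such algebras, the $q$-binomial identity $\sum_{n=0}^{N}(-1)^n \frac{[N]_{q_i}!}{[N-n]_{q_i}![n]_{q_i}!} q_i^{n(N-n)} = 0$ (with $N = 1-a_{ij}$) is exactly what makes the alternating sum vanish. My plan is to expand $\sum_n (-1)^n \frac{[1-a_{ij}]_{q_i}!}{[1-a_{ij}-n]_{q_i}![n]_{q_i}!} F_i^n F_j F_i^{1-a_{ij}-n}$, move all shift operators to one side picking up powers of $q_i$, and check that the surviving $c$-number coefficients collapse via this $q$-binomial identity. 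The case analysis is governed entirely by $a_{ij} \in \{0,-1,-2,-3\}$, so in practice only a few low-degree relations need checking, and the $G_2$ case ($a_{ij}=-3$) is the most involved.

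\medskip
\noindent
\textbf{Main obstacle.}
The hardest part will be bookkeeping the noncommutativity between the $q$-number prefactors and the shift operators across different summation indices $k$, especially because the inner linear form $\sum_{j=1}^{v(i,k)-1} a_{i,r(j)} v_j$ depends on the position $v(i,k)$ of the $k$-th occurrence of root $i$ relative to the occurrences of root $j$. I expect that a careful choice of normal ordering — pushing all momenta to the right — together with Lemma \ref{qsum} to handle products of the form $g_b(u)^* v g_b(u)$ will be the technical engine. The genuine subtlety is ensuring that cross-terms between distinct occurrences of the same root cancel or combine correctly; I anticipate this is where positivity and the precise normalization of $Q_i/2b_i$ in the $q$-numbers are essential, and where an inductive argument on the length of $w_0$ (peeling off the last simple reflection, as in Proposition \ref{E}) may be needed to reduce to a base case verifiable by direct computation.
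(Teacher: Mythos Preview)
Your proposal has the right instinct --- expand the Serre sum, push shift operators through $q$-number prefactors, and verify an algebraic identity --- but the ``reduction to two-term relations'' is where it goes wrong. The operators $F_i$ do \emph{not} behave like single screening operators, and there is no clean $q$-Heisenberg structure of the form $F_iF_j = q_i^{a_{ij}}F_jF_i + (\text{lower})$ that would let the standard $q$-binomial identity finish the job. Different summands of $F_i$ have different commutation factors with one another and with the summands of $F_j$, because the linear forms $\sum_{j<v(i,k)} a_{i,r(j)}v_j + u_i^k$ depend on where the $k$-th occurrence of $i$ sits relative to the occurrences of $j$. So the expansion does not collapse to a single $q$-binomial sum.

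What the paper actually does is a term-by-term analysis: write $F_i=\sum_k [F_i^k(\bv)]_{q_i}e^{2\pi b p_i^k}$, fix one summand of $F_j$ (index $k$) and one summand from each copy of $F_i$ (indices $k_1,k_2,\dots$), and record the integer shifts
\[
a_{mn}=e^{2\pi b p_i^{k_m}}\cdot F_i^{k_n}(\bv)-F_i^{k_n}(\bv),\quad
b_n=e^{2\pi b p_j^{k}}\cdot F_i^{k_n}(\bv)-F_i^{k_n}(\bv),\quad
c_n=e^{2\pi b p_i^{k_n}}\cdot F_j^{k}(\bv)-F_j^{k}(\bv).
\]
The explicit form of the linear functions in Proposition~\ref{FH} forces the constraints $a_{mn}=2-a_{nm}$, $b_n=-2-2c_n$, $a_{mn}\in\{0,1,2\}$, $b_n\in\{0,-2\}$. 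The doubly-laced Serre relation then reduces to a single closed-form identity in $q$-numbers with these constrained shift parameters, which is checked directly (and $G_2$ is handled by hand from Section~\ref{sec:G2}). Your appeal to Lemma~\ref{qsum}, positivity, and induction on $l(w_0)$ are all red herrings here: the argument is purely algebraic and uniform in the reduced expression, and the quantum dilogarithm plays no role in this particular proof.
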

\begin{proof} The method of the proof is similar to the approach givein in \cite{FI}.  Let $i$ corresponds to the short root and $j$ the long root, and let us write $$F_i=\sum_{k=1}^n [F_i^k(\bv)]_{q_i}e^{2\pi bp_i^k},$$ where $F_i^k(\bv)$ are linear functions in $\bv:=(v_i)$. Fix $k,k_1,k_2,k_3$ and denote by 
\begin{eqnarray*}
a_{mn}&=&e^{2\pi bp_i^{k_m}}\cdot F_i^{k_n}(\bv)-F_i^{k_n}(\bv),\\
b_{n}&=&e^{2\pi bp_j^{k}}\cdot F_i^{k_n}(\bv)-F_i^{k_n}(\bv),\\
c_{n}&=&e^{2\pi bp_i^{k_n}}\cdot F_j^{k}(\bv)-F_j^{k}(\bv).
\end{eqnarray*}
That is, for each fixed terms we look at how much the factor of $F_j$ is shifted by the action of the different components of $F_i$. The quantum Serre relations for simply-laced roots are proved in exactly the same way in \cite{FI}. However in the doubly-laced case the calculation is more involved. We observe from the explicit expressions that
\Eq{a_{mn}=2-a_{nm},\tab b_n=-2-2c_n,}
and furthermore $a_{mn}$ only takes value in $\{0,1,2\}$ while $b_n$ only takes values in $\{0,-2\}$. 
Then the quantum Serre relations for the long root $F_j$ is equivalent to the vanishing of the following expression
\begin{eqnarray*}
QSE &\iff& [2+b_1]_q\left([a_{12}+b_2]_q[a_{23}-a_{31}+b_3]_q+[2-a_{31}+b_3]_q[a_{12}-a_{23}+b_2]_q\right)\\
&&+[2+b_2]_q\left([a_{23}+b_3]_q[a_{31}-a_{12}+b_1]_q+[2-a_{12}+b_1]_q[a_{23}-a_{31}+b_3]_q\right)\\
&&+[2+b_3]_q\left([a_{31}+b_1]_q[a_{12}-a_{23}+b_2]_q+[2-a_{23}+b_2]_q[a_{31}-a_{12}+b_1]_q\right)\\
&=&0
\end{eqnarray*}
which can be checked directly. The quantum Serre relations for type $G_2$ can be checked directly using the explicit expression given in Section \ref{sec:G2}.
\end{proof}


\section{Positive representations of $\cU_q(\g_\R)$ of type $B_2=C_2$}\label{sec:B2}
\subsection{Lusztig's data and transformation}\label{sec:LdataB2}
From Section \ref{sec:Ldata}, we know that when $\g$ is of type $B_2=C_2$, the positive unipotent subgroup is parametrized by
\Eq{x_1(a)x_2(b)x_1(c)x_2(d)=x_2(d')x_1(c')x_2(b')x_1(a'),}
where $a,b,c,d,a',b',c',d'\in \R_{>0}$.
Let us choose the following root subgroup on $C_2=Sp(4,\R)$:
\Eq{\label{1tmatrix}x_s(t)=\veca{1&t&0&0\\0&1&0&0\\0&0&1&0\\0&0&-t&1},\tab x_l(t)=\veca{1&0&0&0\\0&1&0&0\\0&0&1&t\\0&0&0&1},}
where $x_s$ and $x_l$ correspond to the short and long root respectively.

Then we have the following transformation rules:
\begin{Lem}\cite[Thm 3.1]{BZ} We have
\begin{eqnarray}
a'=\frac{abc}{R},&&\tab b'=\frac{R^2}{S},\\
c'=\frac{S}{R},&&\tab d'=\frac{bc^2d}{S},
\end{eqnarray}
where
\Eq{R=ab+ad+cd,\tab S=a^2b+d(a+c)^2.}
Furthermore, the transformation 
\Eq{\label{phi}\phi:(a,b,c,d)\mapsto (a',b',c',d')} 
is an involution:
\Eq{(a'',b'',c'',d'')=(a,b,c,d).}
\end{Lem}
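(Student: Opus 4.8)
The plan is to prove the two assertions in turn: first the explicit transformation formulas, by a direct computation in $Sp(4,\R)$ using the given $4\times 4$ matrices, and then the involution property, by exploiting a symmetry of the defining relation that bypasses the explicit formulas entirely.

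For the formulas, I would multiply out both ordered products explicitly. The left-hand side $x_1(a)x_2(b)x_1(c)x_2(d)$ is a single unipotent matrix $M$ whose entries are concrete polynomials in $a,b,c,d$, while the right-hand side $x_2(d')x_1(c')x_2(b')x_1(a')$ is the same matrix written through the unknowns $a',b',c',d'$. Equating the two yields a polynomial system, and because each product is assembled from the elementary factors in a fixed order, the unknowns can be solved for successively (reading one primed variable off a corner entry, then the next) rather than all at once. I expect the quantities $R=ab+ad+cd$ and $S=a^2b+d(a+c)^2$ to appear as the values of specific entries of $M$, which is exactly why they occur as the common denominators in $a',b',c',d'$. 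The main obstacle in this part is purely organizational: matching the correct entries and inverting the resulting nonlinear relations cleanly, while keeping track of which entry determines which primed variable.

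For the involution, rather than substituting the explicit formulas into themselves (which also works but is laborious), I would use the following symmetry. Let $h=\mathrm{diag}(1,-1,1,-1)$ and set $\eta(g)=hg^{-1}h^{-1}$. One checks directly that $hx_i(t)h^{-1}=x_i(-t)$ for $i=1,2$, since conjugation by $h$ negates the relevant off-diagonal entries of both $x_s$ and $x_l$; hence $\eta(x_i(t))=hx_i(t)^{-1}h^{-1}=hx_i(-t)h^{-1}=x_i(t)$. Moreover $\eta$ is an anti-automorphism, being the composition of inversion with a conjugation. Applying $\eta$ to the defining relation $x_1(a)x_2(b)x_1(c)x_2(d)=x_2(d')x_1(c')x_2(b')x_1(a')$ reverses each product while fixing every factor, giving
\[
x_2(d)x_1(c)x_2(b)x_1(a)=x_1(a')x_2(b')x_1(c')x_2(d').
\]
By uniqueness of Lusztig's parametrization, this identity says precisely that applying $\phi$ to $(a',b',c',d')$ returns $(a,b,c,d)$, i.e. $\phi\circ\phi=\mathrm{id}$. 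The one subtlety to watch is the order reversal built into the definition of $\phi$ (it reads the primed coordinates off the reversed word), which is exactly what the anti-automorphism $\eta$ supplies; once the signs in $h$ are fixed correctly, the argument is immediate and, pleasantly, independent of the explicit formulas obtained in the first part.
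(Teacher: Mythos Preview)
The paper states this lemma without proof, treating it as a direct computation from the explicit $4\times 4$ matrices, so there is no argument to compare against. Your plan for the first part (multiply out both products in $Sp(4,\R)$ and read off $a',b',c',d'$ entry by entry) is exactly the intended computation and is correct; the quantities $R$ and $S$ do appear as specific matrix entries of the product $x_1(a)x_2(b)x_1(c)x_2(d)$.

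Your argument for the involution is correct and more conceptual than a brute-force substitution of the formulas into themselves. I verified the key claims: with $h=\mathrm{diag}(1,-1,1,-1)$ one has $hx_s(t)h^{-1}=x_s(-t)$ and $hx_l(t)h^{-1}=x_l(-t)$ (since conjugation by $h$ multiplies the $(1,2)$, $(4,3)$, and $(3,4)$ entries by $-1$), and since $x_i(t)^{-1}=x_i(-t)$ this gives $\eta(x_i(t))=x_i(t)$. Because $\eta$ is an anti-automorphism, applying it to the defining identity yields $x_1(a')x_2(b')x_1(c')x_2(d')=x_2(d)x_1(c)x_2(b)x_1(a)$, which by uniqueness of the Lusztig parametrization is exactly $\phi(a',b',c',d')=(a,b,c,d)$. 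The only point worth making explicit in a write-up is that uniqueness of the factorization for $U_{>0}^+$ is being invoked; this holds because the map \eqref{w0coord} is a bijection onto its image, which is a standard fact from total positivity.
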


\subsection{Classical principal series representations}\label{sec:classicalB2}
Using the above transformation, we can find the classical principal series representations for $B_2$ by commuting the corresponding root subgroup to the front. Under the Mellin transform, let $t, v$ be the variables corresponding to the Lusztig's parameters $a, c$ of the short root, while $u, w$ correspond to the parameters $b, d$ of the long root.

\begin{Prop} Corresponding to $w_0=s_1s_2s_1s_2$ where $1$ is short and $2$ is long, we have 
\begin{eqnarray}
E_1&=&\frac{d}{b}\del[,a]+\frac{2d}{c}\del[,b]+(1-\frac{d}{b})\del[,c]-\frac{2d}{c}\del[,d],\\
E_2&=&\del[,d],
\end{eqnarray}
so that the corresponding Mellin transformed action on $f(t,u,v,w)$ is given by
\begin{eqnarray}
E_1:f&\mapsto&(1+t)f(t+1,u+1,w-1),\\
&&+(1+2u-v)f(u+1,v+1,w-1)+(1+v-2w)f(v+1)\nonumber\\
E_2:f&\mapsto&(1+w)f(w+1).
\end{eqnarray}
(For notational convenience, the unshifted variables are omitted in the argument.)

On the other hand, corresponding to $w_0=s_2s_1s_2s_1$ we have
\begin{eqnarray}
E_1&=&\del[,a],\\
E_2&=&-\frac{a}{b}(1+\frac{a}{c})\del[,a]+(1-\frac{a^2}{c^2})\del[,b]+\frac{a}{b}(\frac{a}{c}+1)\del[,c]+\frac{a^2}{c^2}\del[,d],
\end{eqnarray}
so that the corresponding Mellin transformed action on $f(t,u,v,w)$ is given by
\begin{eqnarray}
E_1:f&\mapsto&(1+t)f(t+1),\\
E_2:f&\mapsto&(1-t+u)f(u+1)+(2-t+v)f(a-1,u+1,v+1)\\
&&+(1-u+v)f(t-2,u+1,v+2)+(1+w)f(t-2,v+2,w+1).\nonumber
\end{eqnarray}
The actions of $F_i$ and $H_i$ are given in Definition \ref{FH}.
\end{Prop}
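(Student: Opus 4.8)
The plan is to read off the differential operators $E_1,E_2$ from the definition of the principal series action, and then convert them to difference operators by the Mellin transform. First I would reduce everything to a pure right-translation computation. The operator $E_i$ is the infinitesimal action of the one-parameter subgroup $x_i(\eps)$, and since $x_i(\eps)\in U_{>0}^+$ the product $hx_i(\eps)$ remains in $U_{>0}^+$ for every $h\in U_{>0}^+$; hence in $g\cdot f(h)=\chi_\l(hg)f([hg]_+)$ the torus and lower-unipotent parts of the Gauss decomposition of $hx_i(\eps)$ are trivial, so $\chi_\l(hx_i(\eps))=1$ and $[hx_i(\eps)]_+=hx_i(\eps)$. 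Thus $(E_if)(h)=\frac{d}{d\eps}\big|_{0}f(hx_i(\eps))$ is a $\l$-independent translation field, consistent with the stated $\l$-free operators. For the rightmost generator this is immediate: in $w_0=s_1s_2s_1s_2$ the last factor is $x_2(d)$ with $x_2(d)x_2(\eps)=x_2(d+\eps)$, so $E_2=\del[,d]$; symmetrically, in $w_0=s_2s_1s_2s_1$ (parametrized so that $x_1(a)$ sits at the right) one gets $E_1=\del[,a]$.

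The substantive step is the ``wrong-end'' generator ($E_1$ for the first word, $E_2$ for the second), where right multiplication by $x_i(\eps)$ lengthens the reduced word and must be returned to standard form using the rank-four braid move for $B_2$, namely the transformation $\phi$ with $R=ab+ad+cd$ and $S=a^2b+d(a+c)^2$. Concretely, I would apply $\phi$ to expose a same-type factor at the far right, absorb $x_i(\eps)$ into it (so that, for $E_1$, $x_1(a')\mapsto x_1(a'+\eps)$), and then return to the $x_1x_2x_1x_2$ ordering via the inverse transformation, which by the involution property $(a'',b'',c'',d'')=(a,b,c,d)$ is given by the same formulas; differentiating the resulting coordinate functions $a(\eps),\dots,d(\eps)$ at $\eps=0$ produces the coefficients of $\del[,a],\dots,\del[,d]$. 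Equivalently, and perhaps more transparently, I would use the explicit symplectic matrices $x_s(t),x_l(t)$ to form the $4\times4$ product $hx_i(\eps)$, recover the Lusztig coordinates from its entries, and differentiate. Either route should yield the stated coefficients $\tfrac db,\tfrac{2d}c,1-\tfrac db,-\tfrac{2d}c$ for $E_1$ and the analogous expression for $E_2$. I expect this to be the main obstacle: unlike the simply-laced relation \eqref{121}, the $B_2$ braid move is genuinely nonlinear, so the bookkeeping through $R$ and $S$ is where errors are most likely.

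Finally I would apply the Mellin transform $f(u)=\int F(x)x^u\,dx$ in each variable, under which multiplication by a coordinate becomes a unit shift of the corresponding Mellin variable and $\del[,x]$ becomes a shift together with multiplication by an affine function of that variable (so that $\del[,d]$ reproduces $E_2\colon f\mapsto(1+w)f(w+1)$). Substituting these rules into each differential operator and then collecting all terms that induce the same total shift gives the stated difference operators; the one point requiring real care is precisely this collection, since distinct derivative terms overlap. For instance the $\del[,b]$ term and part of the $\del[,c]$ term both contribute to the shift $(u{+}1,v{+}1,w{-}1)$ and combine to the coefficient $1+2u-v$, while the $\del[,d]$ term supplies the $-2w$ correcting $(1+v)$ to $1+v-2w$. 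The second reduced word is handled identically.
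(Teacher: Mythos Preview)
Your proposal is correct and matches the paper's approach. The paper does not give a detailed proof of this proposition; it only states that one finds the operators ``by commuting the corresponding root subgroup to the front'' using the $B_2$ braid transformation $\phi$ from Section~\ref{sec:LdataB2}. Your plan---reduce to right translation, use the involution $\phi$ (or the explicit $Sp(4)$ matrices) to absorb $x_i(\eps)$ and return to the chosen word, differentiate at $\eps=0$, then Mellin transform and collect terms by common shift---is exactly that computation spelled out, and your identification of the term-collection step as the only delicate point is accurate.
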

\begin{proof} The action of the differential operators follows from the differentiation of the group action \eqref{groupaction} by the matrices given explicitly in \eqref{1tmatrix}, and then the Mellin transformed action follows from definition \eqref{MellinTransform}.
\end{proof}

\subsection{Explicit expressions}\label{sec:explicit}
Following the work in \cite{Ip}, we will construct the positive representations for $\cU_q(\g_\R)$ by quantizing the weights of the Mellin transformed action appropriately, and introduce certain twisting to make the actions positive. 

Recall from Definition \ref{qi} that $q_1=q^\frac{1}{2}=e^{\pi i b_s^2}$ and $q_2=q=e^{\pi ib^2}$. Also let $Q_s=b_s+b_s\inv$. 

\begin{Thm}\label{B2} The positive representations corresponding to $w_0=s_1s_2s_1s_2$  for $\cU_q(\g_\R)$ where $\g$ is of type $B_2$, acting on $L^2(\R^4)$, is given by

\begin{eqnarray} 
E_1&=&\left[\frac{Q_s}{2b_s}-\frac{i}{b}t\right]_{q_1}e^{2\pi b(-p_t-p_u+p_w)}+\left[\frac{Q_s}{2b_s}-\frac{i}{b}(2u-v)\right]_{q_1}e^{2\pi b(-p_u-p_v+p_w)}\nonumber\\
&&+\left[\frac{Q_s}{2b_s}-\frac{i}{b}(v-2w)\right]_{q_1}e^{-2\pi bp_v}\\
E_2&=&\left[\frac{Q}{2b}-\frac{i}{b}w\right]_{q_2}e^{-2\pi bp_w}\\
F_1&=&\left[\frac{Q_s}{2b_s}+\frac{i}{b}(2\l_1+t)\right]_{q_1}e^{2\pi bp_t}+\left[\frac{Q_s}{2b_s}+\frac{i}{b}(2\l_1+2t-2u+v)\right]_{q_1}e^{2\pi bp_v}\\
F_2&=&\left[\frac{Q}{2b}+\frac{i}{b}(2\l_2-t+u)\right]_{q_2}e^{2\pi bp_u}+\left[\frac{Q}{2b}+\frac{i}{b}(2\l_2-t+2u-v+w)\right]_{q_2}e^{2\pi bp_w}\nonumber\\\\
K_1&=&q_1^{H_1}=e^{\pi b(-\l_1-t+u-v+w)}\\
K_2&=&q_2^{H_2}=e^{\pi b(-2\l_2+t-2u+v-2w)}.
\end{eqnarray}
Note that for $q=e^{\pi ib^2}$,
\Eq{\left[\frac{Q}{2b}-\frac{i}{b}u\right]_{q}e^{2\pi bp}=e^{\pi b(u+2p)}+e^{\pi b(-u+2p)}}
is a positive essentially self-adjoint operator whenever $[p,u]=\frac{1}{2\pi i}$.

On the other hand,  the positive representations corresponding to $w_0=s_2s_1s_2s_1$ is more complicated, and is given by
\begin{eqnarray}
E_1&=&\left[\frac{Q_s}{2b_s}-\frac{i}{b}t\right]_{q_1}e^{-2\pi bp_t}\\
E_2&=&\left[\frac{Q}{2b}-\frac{i}{b}w\right]_{q_2}e^{2\pi b(2p_t-2p_v-p_w)}+\left[\frac{Q}{2b}-\frac{i}{b}(u-t)\right]_{q_2}e^{-2\pi bp_u}\\
&&+[2]_{q_1}\left[\frac{Q}{2b}-\frac{i}{2b}(v-t)\right]_{q_2}e^{2\pi b(p_t-p_u-p_v)}+\left[\frac{Q}{2b}-\frac{i}{b}(v-u)\right]_{q_2}e^{2\pi b(2p_t-p_u-2p_v)}\nonumber\\
F_1&=&\left[\frac{Q_s}{2b_s}+\frac{i}{b}(2\l_1+v-2w)\right]_{q_1}e^{2\pi bp_v}+\left[\frac{Q_s}{2b_s}+\frac{i}{b}(2\l_1+t-2u+2v-2w)\right]_{q_1}e^{2\pi bp_t}\nonumber\\\\
F_2&=&\left[\frac{Q}{2b}+\frac{i}{b}(2\l_2+w)\right]_{q_2}e^{2\pi bp_w}+\left[\frac{Q}{2b}+\frac{i}{b}(2\l_2+u-v+2w)\right]_{q_2}e^{2\pi bp_u}\\
K_1&=&e^{\pi b(-\l_1-t+u-v+w)}\\
K_2&=&e^{\pi b(-2\l_2+t-2u+v-2w)}.
\end{eqnarray}
\end{Thm}

Hence one can see that the expression resembles that of the classical formula. However, it turns out that it is more natural to consider the rescaled version, where for variables corresponding to the short root, we rescale by $\sqrt{2}$, so that
\begin{eqnarray*}
bu&\mapsto& b\sqrt{2}u=2b_su, \\
bp_u&\mapsto& \frac{b}{\sqrt{2}}p_u=b_sp_u.
\end{eqnarray*}
We will also rescale the parameters $\l_s$ corresponding to short roots to $\sqrt{2}\l_s$.

Hence let us introduce the following notation. Let $u_s$ and $u_l$ be a linear combinations of Mellin transformed variables corresponding to the short root and long root respectively. (This also applies to the parameters $\l_i$). Also let $p_s$ and $p_l$ be the corresponding shifting operators. 

\begin{Def}\label{notation} We denote by 
\Eq{[u_s+u_l]e(p_s+p_l) =e^{\pi (b_s u_s+b_lu_l)+2\pi (b_sp_s+ b_lp_l)}+e^{-\pi (b_s u_s+b_lu_l)+2\pi (b_sp_s+ b_lp_l)}.}
\end{Def}

Then under the above rescaling, we can rewrite Theorem \ref{B2} as follows:
\begin{MThm}[\ref{B2}*]
Let 
\begin{eqnarray}
e_i&:=&2\sin(\pi b_i^2)E_i=\left(\frac{i}{q_i-q_i\inv}\right)\inv E_i,\\
f_i&:=&2\sin(\pi b_i^2)F_i=\left(\frac{i}{q_i-q_i\inv}\right)\inv F_i.
\end{eqnarray}
Then the positive representations corresponding to $w_0=s_1s_2s_1s_2$  for $\cU_q(\g_\R)$ where $\g$ is of type $B_2$, acting on $L^2(\R^4)$, is given by

\begin{eqnarray} 
e_1&=&\label{e1B}[t]e(-p_t-p_u+p_w)+[u-v]e(-p_u-p_v+p_w)+[v-w]e(-p_v)\\
e_2&=&\label{e2B}[w]e(-p_w)\\
f_1&=&[-2\l_1-t]e(p_t)+[-2\l_1-2t+u-v]e(p_v)\\
f_2&=&[-2\l_2+2t-u]e(p_u)+[-2\l_2+2t-2u+2v-w]e(p_w)\\
K_1&=&e^{\pi b_s(-2\l_1-2t-2v)}e^{\pi b(u+w)}\\
K_2&=&e^{\pi b(-2\l_2-2u-2w)}e^{\pi b_s(2t+2v)}.
\end{eqnarray}

On the other hand,  the positive representations corresponding to $w_0=s_2s_1s_2s_1$ is given by
\begin{eqnarray}
e_1&=&\label{e1C}[t]e(-p_t)\\
e_2&=&\label{e2C} [w]e(2p_t-2p_v-p_w)+[u-2t]e(-p_u)\\
&&+[2]_{q_1}[v-t]e(p_t-p_u-p_v)+[2v-u]e(2p_t-p_u-2p_v)\nonumber\\
f_1&=&[-2\l_1-v+w]e(p_v)+[-2\l_1-t+u-2v+w]e(p_t)\\
f_2&=&[-2\l_2-w]e(p_w)+[-2\l_2-u+2v-2w]e(p_u)\\
K_1&=&e^{\pi b_s(-2\l_1-2t-2v)}e^{\pi b(u+w)}\\
K_2&=&e^{\pi b(-2\l_2-2u-2w)}e^{\pi b_s(2t+2v)}.
\end{eqnarray}
\end{MThm}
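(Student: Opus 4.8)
The statement B2$^*$ is a reformulation of Theorem \ref{B2}: it records the very same operators after the normalization $e_i=2\sin(\pi b_i^2)E_i$, $f_i=2\sin(\pi b_i^2)F_i$ and the canonical rescaling of the short-root variables and parameters. Hence no quantum-group relation need be re-checked; the plan is simply to (a) fix a single-term conversion identity turning each summand $[\tfrac{Q_i}{2b_i}\pm\tfrac ib\ell]_{q_i}e^{2\pi bP}$ into the symmetric two-exponential symbol of Definition \ref{notation}, (b) apply the rescaling substitution, and (c) match weights and shifts term by term against the six lines of each reduced word.

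First I would establish the conversion identity. Writing $2\sin(\pi b_i^2)=(q_i-q_i\inv)/i$ cancels the denominator of the $q_i$-number; expanding $q_i^{\pm n}$ with $q_i=e^{\pi ib_i^2}$ and $n=\tfrac{Q_i}{2b_i}\pm\tfrac ib\ell$ gives $2\sin(\pi b_i^2)[n]_{q_i}=e^{\pi ib_i^2/2}e^{\mp\pi(b_i^2/b)\ell}+e^{-\pi ib_i^2/2}e^{\pm\pi(b_i^2/b)\ell}$. Multiplying on the right by the shift $e^{2\pi bP}$ and using the Weyl relation $[p,u]=\tfrac1{2\pi i}$ through $e^Ae^B=e^{\frac12[A,B]}e^{A+B}$, the Baker--Campbell--Hausdorff half-phase cancels the leading factor $e^{\pm\pi ib_i^2/2}$ exactly, precisely because in every term of Theorem \ref{B2} the variable conjugate to the shift momentum occurs in $\ell$ with coefficient $\pm1$; each summand then collapses to the manifestly self-adjoint $e^{\pi(b_i^2/b)\ell+2\pi bP}+e^{-\pi(b_i^2/b)\ell+2\pi bP}$. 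I would record this identity once, in full generality, and apply it verbatim.

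Next I would run the rescaling. Substituting each short-root variable by $\sqrt2$ times itself, each short-root momentum by $1/\sqrt2$ times itself, and $\l_1$ by $\sqrt2\l_1$ is a canonical transformation preserving $[p,u]=\tfrac1{2\pi i}$. For a short $q_1$-number the prefactor $b_i^2/b$ equals $b/2$, so a long-root coefficient $c$ yields weight $\tfrac b2c=b_l\cdot\tfrac c2$ (hence $2u\mapsto u$, $2w\mapsto w$), whereas a short-root coefficient $c$ yields $\tfrac b2c$, which after the $\sqrt2$-rescaling of that variable becomes $b_sc$ (unchanged); simultaneously $b\,p_{\mathrm{short}}\mapsto b_sp_{\mathrm{short}}$ while the long-root momenta are untouched, so the shift data reproduce the $e(\cdot)$ symbols. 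Since the symmetric symbol satisfies $[x]=[-x]$, any overall sign on the weight is immaterial; the parameter $\l$ enters with a fixed sign (coming from the action of $w_0$ on weights) that I would track once and apply uniformly, reconciling the signs of $\l_1$ in $K_1$ and of the variables inside $f_1,f_2$.

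The long-root generators $E_2,F_2$ (with $b_i=b_l=b$) need only the normalization and no variable rescaling, so they are immediate. I expect the genuine obstacle to be the second word $w_0=s_2s_1s_2s_1$: there a scalar $[2]_{q_1}$ survives from Theorem \ref{B2} and must be carried as a coefficient, and the shift exponentials in $e_2$ involve several non-commuting momenta at once, so the phase computation has more than one conjugate pair contributing and the cancellation against $e^{\pm\pi ib_i^2/2}$ must be verified with care, respecting the ordering of the shifts. Positivity and essential self-adjointness of each resulting two-exponential term is then automatic from the closing remark of Theorem \ref{B2}.
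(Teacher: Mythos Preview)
Your plan is essentially what the paper does: the text immediately preceding B2$^*$ introduces exactly the rescaling you propose, and the paper's proof is the single sentence ``the commutation relations of the operators can be checked directly; note that the action of $F_i$ and $K_i$ coincides with the one given in Proposition \ref{FH}.'' So the term-by-term conversion you outline is the detail the paper leaves to the reader, and your reduction of B2$^*$ to Theorem \ref{B2} is entirely in line with the paper's logic.

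One point to tighten. Your phase-cancellation criterion, ``the variable conjugate to the shift momentum occurs in $\ell$ with coefficient $\pm1$,'' is not literally true: already in the second summand of $E_1$ for $w_0=s_1s_2s_1s_2$ one has $\ell=2u-v$ and $P=-p_u-p_v+p_w$, so $u$ carries coefficient $2$ in $\ell$. What actually makes the BCH half-phase cancel $e^{\pm\pi ib_i^2/2}$ is that the \emph{total} commutator $[\pi(b_i^2/b)\ell,\,2\pi bP]=\mp i\pi b_i^2$ in every term; equivalently $\sum_k c_kd_k=\pm1$ where $c_k,d_k$ are the coefficients of the canonical pair $(v_k,p_k)$ in $\ell$ and $P$ respectively (here $2\cdot(-1)+(-1)\cdot(-1)=-1$). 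This is precisely the content of the displayed identity right after Theorem \ref{B2}, and it is what you must verify term by term---especially for the second reduced word, where several conjugate pairs contribute simultaneously. With that criterion stated correctly, your argument goes through; the sign of $\l_i$ and the surviving $[2]_{q_1}$ that you already flagged are the only other bookkeeping items.
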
 

\begin{proof} The commutation relations of the operators can be checked directly. Note that the action of $F_i$ and $K_i$ coincides with the one given in Definition \ref{FH}.
\end{proof}

\subsection{Transformations of operators}\label{sec:trans}
As in the simply-laced case, by quantizing $\phi$ from \eqref{phi}, there is a unitary transformation $\Phi$ that intertwines the above action corresponding to the change of reduced expression $w_0=s_1s_2s_1s_2=s_2s_1s_2s_1$.

\begin{Thm}\label{trans2} 
We define the transformation 
\begin{eqnarray*}
\Phi: L^2(\R^4)&\to& L^2(\R^4)\\
f(t,u,v,w)&\mapsto& \Phi f:=F(t',u',v',w')
\end{eqnarray*}
by
\Eq{\Phi:=T\circ \Phi_3\circ \Phi_2\circ \Phi_1,}
where
\begin{eqnarray}
\Phi_1&=&\frac{g_b(e^{\pi b(u+w)-2\pi b_sv+2\pi b(p_w-p_u)})}{g_b(e^{-\pi b(u+w)+2\pi b_sv+2\pi b(p_w-p_u)})},\\
\Phi_2&=&\frac{g_{b_s}(e^{\pi b_s(t-v)+\pi bw+2\pi b_s(p_v-p_t)+2\pi b(p_w-p_u)})}{g_{b_s}(e^{-\pi b_s(t-v)-\pi bw+2\pi b_s(p_v-p_t)+2\pi b(p_w-p_u)})},\\
\Phi_3&=&\frac{g_b(e^{2\pi b_st+\pi b(w-u)+2\pi b_s(2p_v-2p_t)+2\pi b(p_w-p_u)})}{g_b(e^{-2\pi b_st-\pi b(w-u)+2\pi b_s(2p_v-2p_t)+2\pi b(p_w-p_u)})},
\end{eqnarray}
and $T$ is the transformation matrix of determinant $-1$:
\Eq{\veca{t'\\u'\\v'\\w'}:=T \veca{t\\u\\v\\w}=\veca{0&0&1&-\sqrt{2}\\0&1&0&0\\1&0&0&\sqrt{2}\\0&0&0&1}\veca{t\\u\\v\\w}.}
Then $\Phi$ is a unitary transformations, $\Phi^2=1$, and for any operators $X$,
\Eq{X\longmapsto \Phi \circ X \circ \Phi\inv}
gives the corresponding action on $s_1s_2s_1s_2\corr s_2s_1s_2s_1$.
\end{Thm}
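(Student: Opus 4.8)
The plan is to verify that the stated transformation $\Phi$ intertwines the two explicit operator realizations from Theorem~\ref{B2}$^*$, and that it is a self-inverse unitary. First I would establish unitarity: each $G_k$ is a ratio $g_b(X_k)/g_b(Y_k)$ where, by Lemma~\ref{unitary}, $g_b$ evaluated on a positive self-adjoint operator is unitary, so each $G_k$ is a product of unitaries and hence unitary; the matrix $T$ has determinant $-1$ and, more importantly, preserves the symplectic/commutation structure on $L^2(\R^4)$ (one checks $T$ is the correct linear symplectomorphism on the $(t,u,v,w)$-variables together with their conjugate momenta), so the metaplectic operator it induces is unitary. Composing, $\Phi$ is unitary.

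The core computation is the intertwining relation $\Phi\circ X\circ\Phi\inv = X'$ for $X$ ranging over the generators $e_1,e_2,f_1,f_2,K_1,K_2$ of the $w_0=s_1s_2s_1s_2$ realization, with $X'$ the corresponding generator in the $w_0=s_2s_1s_2s_1$ realization. The strategy is to conjugate step by step: apply $G_1$, then $G_2$, then $G_3$, then $T$, tracking the effect on each monomial of the form $[\,\text{linear in }t,u,v,w\,]e(\text{linear in }p_\bullet)$. At each $G_k$-step I would use Lemma~\ref{qsum}: since $G_k=g_b(u_k)/g_b(u_k')$ has the shape needed to apply \eqref{qsum1}--\eqref{qsum2}, conjugation by $g_b$ of a positive operator produces a sum of two (or, via \eqref{qsum3}--\eqref{qsum4}, three) terms, which is exactly the mechanism by which the single-term $e_1$ in the $C_2$-picture gets expanded into the multi-term $e_2$ in the $B_2$-picture and vice versa. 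The appearance of the $[2]_{q_1}$ coefficient and the $q^4$-type commutation (governing \eqref{qsum3}) in $e_2$ is the signature that exactly one of the three $G_k$ must be a $g_{b_s}$ (namely $G_2$) while $G_1,G_3$ use $g_b$, matching the short/long root data; I would confirm the commutation exponents ($q^2$ versus $q^4$) of the relevant operator pairs line up so that the correct case of Lemma~\ref{qsum} applies at each step. The final $T$-step is a linear change of Mellin variables that relabels the coordinates and, crucially, realizes the quantized involution $\phi$ of \eqref{phi}; after $T$ the monomials should match $e_1',e_2',f_1',f_2',K_1',K_2'$ on the nose.

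For the involution property $\Phi^2=1$, I would invoke that $\phi$ is a classical involution (the Lemma preceding Section~\ref{sec:classicalB2}) and that the quantization preserves this: since $T^2$ acts as the identity on the variable lattice up to the sign bookkeeping, and since applying $\Phi$ twice sends the $s_1s_2s_1s_2$-representation to itself via $s_2s_1s_2s_1$ and back, uniqueness of the intertwiner (the representation is irreducible, so the intertwiner is unique up to scalar) forces $\Phi^2$ to be a scalar; fixing the scalar to $1$ follows from the explicit normalization of the $g_b$-factors and $\overline{\zeta_b}$. Alternatively one checks $\Phi^2=1$ directly by the pentagon/inversion identities for $g_b$ together with $T^2=1$ on the reduced coordinates.

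I expect the main obstacle to be the bookkeeping in the $G_k$-conjugation steps: one must track how each $G_k$ simultaneously shifts \emph{several} of the $e,f,K$ monomials, and in particular how the intermediate operators fail to be in the clean two-term form until all of $G_1,G_2,G_3$ have acted. Getting the arguments of the $g_b$-factors (the precise exponents $\pi b(u+w)-2\pi b_s v+\cdots$ etc.) to be exactly the operators $u_k$ for which $u_kv_k=q^2v_ku_k$ holds — so that Lemma~\ref{qsum} is applicable term-by-term — requires careful matching of the $b$ versus $b_s$ scalings introduced by the rescaling convention of Definition~\ref{notation}. This is the step most likely to hide sign or normalization errors, and it is where the distinction between the short-root factor $G_2$ (using $g_{b_s}$) and the long-root factors $G_1,G_3$ (using $g_b$) must be handled with care.
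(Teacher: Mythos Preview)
Your plan matches the paper's proof essentially one-to-one: the paper also argues unitarity from Lemma~\ref{unitary}, then verifies the intertwining by successive conjugation with $G_1,G_2,G_3$ using Lemma~\ref{qsum}, and singles out that the coefficient $[2]_{q_1}$ in $e_2$ arises precisely from the $q^4$-case \eqref{qsum3}--\eqref{qsum4} applied at the $G_2$ step. The only small addition worth noting is that the paper first remarks that each ratio $g_b(X_k)/g_b(Y_k)$ is well-defined because the two arguments commute with one another; you implicitly use this but do not state it, and it is worth saying explicitly before invoking unitarity.
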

\begin{proof} The ratios of $g_b$ is well-defined since the argument commute with each other. Furthermore they are unitary by Lemma \ref{unitary}. For the proof, it suffices to apply the conjugation properties of $g_b$ from Lemma \ref{qsum} to the positive representation of type $B_2$ given by Theorem \ref{B2}. Note that the factor $[2]_{q_1}$ from $E_2$ is obtained by applying $\Phi_2$ using \eqref{qsum3}-\eqref{qsum4}.
\end{proof}


\section{Positive representations of $\cU_q(\g_\R)$}\label{sec:posrep}
From Theorem \ref{trans2}, together with the result of \cite[Theorem 5.2]{Ip}, we can find explicit expressions of the positive representation starting from any reduced expression of $w_0$, followed by applying the unitary transformations to the desired reduced expression. Consequently, all the operators will be positive essentially self-adjoint. Furthermore, by bringing $s_is_js_is_j$ to the front of $w_0$, all the commutation relations between $E_i$ and $E_j$ will be easy to check, while the commutation relations involving $F_i$ follows from Theorem \ref{Serre} and their explicit expressions.

Actually, we only need the following transformations rules:

\begin{Prop}\label{rules}
If $i,j$ are not connected in the Dynkin diagram, corresponding to $...s_is_j...\corr ...s_js_i...$, the operators transformed on $f(u,v)$ simply by 
\Eq{u\corr v.}

For $i,j$ connected by a single edge in the Dynkin diagram, corresponding to \\$...s_is_js_i...\corr ...s_js_is_j...$, the operators given by \cite[Theorem 5.2]{Ip} transformed on $f(u,v,w)$ as
\Eq{[w]e(-p_w)\corr [u]e(-p_u-p_v+p_w)+[v-w]e(-p_v).}

For $i,j$ connected by a double edge in the Dynkin diagram, corresponding to\\$...s_is_js_is_j...\corr ...s_js_is_js_i...$ the operators given by Theorem \ref{trans2} transformed on $f(t,u,v,w)$ as
\Eq{\eqref{e1B}\corr\eqref{e1C},\tab \eqref{e2B}\corr \eqref{e2C}.}
\end{Prop}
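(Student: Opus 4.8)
The plan is to reduce the statement to the elementary braid moves relating reduced expressions of $w_0$ and to handle each according to the rank-$2$ parabolic subsystem it is supported on. By the Matsumoto--Tits theorem any two reduced words for $w_0$ are connected by a chain of moves $\ldots s_is_j\ldots\corr\ldots s_js_i\ldots$, each involving only two simple reflections $s_i,s_j$; the braid length ($2$, $3$ or $4$) is dictated by whether $\a_i,\a_j$ are unjoined, joined by a single edge, or joined by a double edge, i.e. by the rank-$2$ systems $A_1\times A_1$, $A_2$ and $B_2=C_2$. (The length-$6$ move of $G_2$ is not required here and is treated directly in Section \ref{sec:G2}.) It therefore suffices to prove exactly one transformation rule per type.

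First I would dispatch the unjoined case: since $a_{ij}=a_{ji}=0$, the two root subgroups commute by \eqref{EF}, so exchanging the order of $s_i$ and $s_j$ merely relabels the Lusztig parametrization of $U_{>0}^+$ with no quantum-dilogarithm correction, and the intertwiner is the bare permutation $u\corr v$ of the associated Mellin variables. For the single-edge case I would invoke the simply-laced intertwiner of \cite[Theorem 5.2]{Ip}, which is the quantization of the classical relation \eqref{121} and yields precisely the stated rule for $[w]e(-p_w)$. For the double-edge case I would appeal to Theorem \ref{trans2}: the unitary $\Phi=T\circ G_3\circ G_2\circ G_1$ carries the explicit $B_2$ operators \eqref{e1B},\eqref{e2B} to \eqref{e1C},\eqref{e2C}, which is exactly the claimed correspondence.

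To license these local computations inside an arbitrary $w_0$, I would bring the relevant braid to the front, as in Section \ref{sec:posrep}, so that the reflections $s_i,s_j$ occupy the rightmost positions and their variables are the leftmost Mellin coordinates; the commutation moves used to do so are themselves instances of the unjoined rule. At the front the action of each $E_i$ sees only the rightmost occurrence of $\a_i$ (Proposition \ref{E}), so the displayed pieces are exactly those the move acts on, while the spectator shift operators commute through the $g_b$-intertwiner and pass unchanged. The hard part will be verifying this locality for $F_i$ and $K_i$, whose actions (Proposition \ref{FH}) involve cumulative sums $\sum_j a_{i,r(j)}v_j$ over \emph{all} occurrences of $\a_i$: I must check that under the windowed coordinate change the tail contributions outside the braid are left invariant while the inside-window contributions transform by the same rank-$2$ rule, so that the cumulative weights are transported consistently. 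Granting this, the transformation rules for $F_i$ and $K_i$ follow from those for $E_i$ and the commutation relations, with the internal consistency of the $F_i$-action already secured by the Serre relations of Theorem \ref{Serre}.
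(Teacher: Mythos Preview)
Your approach is essentially the paper's: the proposition is stated without a separate proof, as a direct summary of \cite[Theorem 5.2]{Ip} for the single-edge move and Theorem \ref{trans2} for the double-edge move, with the unjoined case trivial. One point of divergence worth noting: your discussion flags the locality of $F_i,K_i$ under the windowed intertwiner as the ``hard part'' and leaves it as something to be checked, whereas the paper sidesteps this entirely by observing that $F_i$ and $K_i$ are given for \emph{every} reduced expression by the universal closed formula of Proposition \ref{FH} (and \ref{FH}*), so their transformation under a braid move is automatic and need not be tracked through the $g_b$-conjugations; only the $E_i$ actually require the rules of Proposition \ref{rules}.
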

 
Let us also rewrite the action of $F_i$ and $K_i$ from Definition \ref{FH} in terms of the rescaled variables using Definition \ref{notation}.

\begin{MDef}[\ref{FH}*]Let $$f_i=2\sin\pi b_i^2 F_i=\left(\frac{i}{q_i-q_i\inv}\right)\inv F_i.$$
Then the quantized action of $F_i$ is given by
\begin{eqnarray}
f_i&=&\sum_{k=1}^n[-a_{r(j),i}v_j+u_i^k-2\l_i]e(p_i^k)\\
K_i&=&e^{-2\pi b_i\l_i-\sum_{j=1}^{l(w_0)}\pi b_ja_{r(j),i}v_j}
\end{eqnarray}
where $v_j$ is the labeling given in Definition \ref{labelling}.
\end{MDef}

\subsection{Type $B_n$} \label{sec:Bn}
Let us choose the following reduced expression for $w_0$
$$w_0=1212\;\;\;32123\;\;\;4321234\;\;\; ... n(n-1)...1...n$$
where for simplicity, we denote by $i:=s_i$. Then by transposing the desired index to the right, and applying the rules from Proposition \ref{rules} repeatedly, we obtain
\begin{Prop} The action of $E_1$ is given by
\Eq{e_1=\sum_{k=1}^n \left[u_1^k-u_2^{2k-1}\right]e\left(-p_1^k-\sum_{l=1}^{2k-2}(-1)^lp_2^l\right)+\sum_{k=1}^{n-1}\left[u_2^{2k}-u_1^k\right]e\left(-p_1^k-\sum_{l=1}^{2k}(-1)^lp_2^l\right).}
Note that the variable $u_2^{2n-1}=0$ is non-existent.

The action of $E_i$ for $i\geq 2$ is given by
\Eq{\label{BnEi}e_i=\sum_{k=1}^{2(n-i)+1}\left[(-1)^k(u_{i+1}^k-u_i^k)\right]e\left(\sum_{l_1=1}^{s_1(k)}(-1)^{l_1}p_i^{l_1}+\sum_{l_2=1}^{s_2(k)}(-1)^{l_2}p_{i+1}^{l_2}\right),}
where $e_i=2\sin\pi b_i^2 E_i$ and
\begin{eqnarray*}
 s_1(k)&:=&2\left\lceil\frac{k}{2}\right\rceil  -1,\\
s_2(k)&:=&2\left\lfloor\frac{k}{2}\right\rfloor.
\end{eqnarray*}

\end{Prop}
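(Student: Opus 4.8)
The plan is to derive the explicit formulas for $e_1$ and $e_i$ ($i \geq 2$) by the strategy described right before the Proposition: namely, start from the known action of the $E$-operators for whatever reduced expression places the $s_is_js_is_j$ (or $s_is_js_i$) block at the \emph{right} end of $w_0$, and then apply the change-of-word unitary transformations from Theorem \ref{trans2} (equivalently, the transformation rules collected in Proposition \ref{rules}) repeatedly until the reduced word equals the chosen one $w_0=1212\;32123\;\cdots$. Concretely, to read off $e_i$ one transposes the relevant index to the right of $w_0$ using the three moves of Proposition \ref{rules}, records how the operator $[w]e(-p_w)$ at the rightmost position gets transported leftward through each braid-type move, and then reads off the accumulated shift operators and weight arguments. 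Since every move is a conjugation by a unitary (the ratios of $g_b$'s in Theorem \ref{trans2}), positivity and essential self-adjointness are automatically preserved throughout.

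First I would set up the bookkeeping carefully. The labelling convention of Definition \ref{labelling}, where $u_i^k$ is indexed by the $k$-th appearance of root $i$ counted from the right in $w_0$, must be matched against the fixed reduced expression. For the block structure $n(n-1)\cdots 1 \cdots n$ at level $i$, I would track how many times index $i$ and index $i+1$ occur and in what interleaving pattern, since this is what produces the alternating signs $(-1)^l$ in the shift operators and the ceiling/floor cutoffs $s_1(k),s_2(k)$. The cleanest route is induction on the rank, or equivalently on the ``level'' $i$: assuming the formula holds after the block involving roots $i+1,\ldots,n$ has been brought to standard form, I apply the double-edge move (for the $B_n$ short/long pair at $i=1$) or the single-edge moves (for $i\geq 2$) of Proposition \ref{rules} to propagate index $i$ one step further and verify the formula's induction step.

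The heart of the argument is purely combinatorial: confirming that the recursive application of the rule
\[
[w]e(-p_w)\;\corr\;[u]e(-p_u-p_v+p_w)+[v-w]e(-p_v)
\]
reproduces exactly the summation ranges $1\leq k\leq 2(n-i)+1$ and the sign patterns encoded by $s_1(k)=2\lceil k/2\rceil-1$ and $s_2(k)=2\lfloor k/2\rfloor$. This is where the ceiling and floor arise: each pass through a braid move splits a single term into two and shifts the momentum indices, and the parity of $k$ governs whether the new $p_i$ or $p_{i+1}$ contribution is added, which is precisely what $\lceil\cdot\rceil$ versus $\lfloor\cdot\rfloor$ records. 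The $e_1$ formula requires separate attention because root $1$ is short and the relevant move at the left end is the double-edge transformation \eqref{e1B}$\corr$\eqref{e1C}, which explains the split into $\sum_{k=1}^n$ and $\sum_{k=1}^{n-1}$ and the appearance of $u_2^{2k-1}$ and $u_2^{2k}$ (odd versus even occurrences of the long root $2$), together with the boundary remark that $u_2^{2n-1}=0$.

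The main obstacle I anticipate is neither positivity nor self-adjointness (these are free, since all transformations are unitary conjugations) but rather the \emph{combinatorial verification of the index arithmetic}: keeping consistent track, across the many braid moves, of which Mellin-transformed variable $v_j$ sits in each weight argument and which momenta $p_i^k$ accumulate in each exponential, so that the telescoping of signs matches $s_1(k),s_2(k)$ exactly. I would manage this by proving a single-step lemma describing precisely how one double-edge (or single-edge) move acts on a generic term of the form $[\pm(u_{i+1}^k-u_i^k)]\,e(\cdots)$, and then iterating. The boundary cases---the first and last values of $k$, and the non-existent variable $u_2^{2n-1}$---are the most error-prone and would need to be checked by hand against small ranks ($n=2,3$) to anchor the induction.
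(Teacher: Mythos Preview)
Your proposal is correct and follows precisely the approach the paper uses: the paper itself offers only the one-line justification ``by transposing the desired index to the right, and applying the rules from Proposition \ref{rules} repeatedly, we obtain'' the stated formulas, and your plan is exactly a detailed elaboration of that strategy (start from the simple action of Proposition \ref{E} at the right end of $w_0$, then propagate through the single- and double-edge braid moves of Proposition \ref{rules}, tracking the alternating sign pattern that produces $s_1(k),s_2(k)$). Your emphasis on the combinatorial bookkeeping and on checking small ranks to anchor the induction is appropriate, since the paper leaves all of that implicit.
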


\subsection{Type $C_n$}\label{sec:Cn}
Using the exact same expression for $w_0$ as in type $B_n$, we have the following action.

\begin{Prop} The action of $E_1$ is given by
\begin{eqnarray}
e_1&=&\sum_{k=1}^n \left[u_1^k-2u_2^{2k-1}\right]e\left(-p_1^k-2\sum_{l=1}^{2k-2}(-1)^lp_2^l\right)\nonumber\\
&&+[2]_{q_s}\sum_{k=1}^{n-1}[u_2^{2k}-u_2^{2k-1}]e\left(-p_1^k-2\sum_{l=1}^{2k-2}(-1)^lp_2^l+p_2^{2k-1}-p_2^{2k}\right)\nonumber\\
&&+\sum_{k=1}^{n-1}\left[2u_2^{2k}-u_1^k\right]e\left(-p_1^k-2\sum_{l=1}^{2k}(-1)^lp_2^l\right),
\end{eqnarray}
while the action of $E_i$ for $i\geq 2$ is the same as \eqref{BnEi}.
\end{Prop}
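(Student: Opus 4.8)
The plan is to obtain $e_1$ for type $C_n$ by the identical transposition procedure used for type $B_n$: starting from the reduced word $w_0=1212\,32123\,4321234\,\ldots$, I would bring a copy of $s_1$ to the rightmost position through a chain of elementary braid moves, tracking the operator by repeatedly applying the transformation rules of Proposition \ref{rules}. Once $s_1$ is rightmost, Proposition \ref{E} gives the single elementary term, so the stated expression for $e_1$ is read off by running the accumulated transformation backward to the target word. The variables attached to node $1$ and node $2$ are governed by the double edge between them, which is where the only new feature relative to $B_n$ appears.

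First I would settle the claim that $e_i$ for $i\geq 2$ coincides with \eqref{BnEi}. The Cartan matrices of $B_n$ and $C_n$ differ only in the entries $a_{12}$ and $a_{21}$, that is, only across the double edge; for $i\geq 2$ the pair $(i,i+1)$ is joined by a single edge in both types, and \eqref{BnEi} involves only the variables of nodes $i$ and $i+1$. Hence the derivation of \eqref{BnEi} uses exclusively the single-edge rule of Proposition \ref{rules}, which is literally the same transformation for $B_n$ and $C_n$, so this part of the statement is inherited verbatim from the $B_n$ computation.

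The substance is therefore the derivation of $e_1$. Here node $1$ is the \emph{long} root and node $2$ the short one, the reverse of the $B_2$ normalization of Section \ref{sec:B2}, so in tracking $e_1$ through the local move $s_1s_2s_1s_2\leftrightarrow s_2s_1s_2s_1$ I would follow the long-root operator, whose evolution is governed by the rule \eqref{e2B}$\leftrightarrow$\eqref{e2C} rather than the short-root rule \eqref{e1B}$\leftrightarrow$\eqref{e1C}. Because $s_1$ is not the rightmost letter of this block, the relevant side is the complicated form \eqref{e2C}, which carries the prefactor $[2]_{q_1}=[2]_{q_s}$ and the coefficient-$2$ combinations $u-2t$ and $2v-u$. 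This is exactly why the $C_n$ answer splits into three families — the main family $[u_1^k-2u_2^{2k-1}]$, the $[2]_{q_s}$-family $[u_2^{2k}-u_2^{2k-1}]$, and the doubled family $[2u_2^{2k}-u_1^k]$ — whereas $B_n$ produces only two. I would then run this inductively over the successive $1$--$2$ blocks of the word, checking that each block contributes one term to each family, that the shift operators accumulate the alternating sums $\sum_l(-1)^lp_2^l$ (doubled in the outer two families and single in the middle, reflecting $a_{21}=-2$), and that the ranges $1\leq k\leq n$ and $1\leq k\leq n-1$ emerge correctly.

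The main obstacle is precisely this bookkeeping: propagating the alternating signs together with the doubling pattern through repeated double-edge moves, where the extra middle term carrying $[2]_{q_s}$ is generated by the conjugation identities \eqref{qsum3}--\eqref{qsum4} (applied through the transformation $G_2$ of Theorem \ref{trans2}). One must verify that the coefficient-$2$ factors attach only to the $u_2$-variables and their shift sums and never to the $u_1$-terms, and that no spurious terms survive the cancellations. Once the pattern from the first block is correctly identified, the remaining blocks are handled by the same routine verification.
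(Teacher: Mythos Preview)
Your plan is correct and matches the paper's approach: the paper does not give a separate proof for this proposition, but derives it (as for type $B_n$ in Section \ref{sec:Bn}) by transposing $s_1$ to the right of the same reduced word and repeatedly applying the rules of Proposition \ref{rules}, with the only new input being that node $1$ is now the long root, so the relevant double-edge rule is \eqref{e2B}$\leftrightarrow$\eqref{e2C} rather than \eqref{e1B}$\leftrightarrow$\eqref{e1C}. Your identification of the three families with the terms $[u-2t]$, $[2]_{q_1}[v-t]$, $[2v-u]$ of \eqref{e2C}, and of the remaining term $[w]e(\ldots)$ as the seed for the next block, is exactly the mechanism the paper has in mind.
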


\subsection{Type $F_4$} \label{sec:F4}
Let us choose the following reduced expression for $w_0$:
$$w_0=3232\;\;\;12321\;\;\;432312343213234$$
which follows from the embedding $B_2\subset B_3\subset F_4$. We will use the notation introduced in \cite{Ip2} to simplify the expressions. Let
\begin{eqnarray*}
P_1&=&-p_1^3+p_1^2-p_2^4+2p_3^4-2p_3^5+2p_3^2-2p_3^3+p_2^1-p_1^1\\
P_2&=&-p_2^7+p_2^6-p_1^4+p_1^3-p_2^5+2p_3^5-2p_3^6+p_2^3-p_1^2+p_1^1-p_2^2+2p_3^1-2p_3^2\\
P_3&=&-p_3^9-p_2^8+p_2^7-p_2^6+p_2^5+p_3^6-p_4^3-p_3^5+p_3^3-p_2^3+p_2^2+p_3^2-p_3^4+p_4^1-p_3^1
\end{eqnarray*}
and let $P_i(x)$ be the partial sum of $P_i$ from $x$ (ignoring the coefficient) to the right most term. For example
$$P_1(p_3^2) := 2p_3^2-2p_3^3+p_2^1-p_1^1.$$

\begin{Prop} The action of $E_i$ is given as follows:
{\small
\begin{eqnarray}
e_1&=&[u_1^3]e(P_1)+[u_2^4-u_1^2]e(P_1(p_2^4))+[u_2^3-2u_3^4]e(-p_2^3+P_1(p_3^2))\nonumber\\
&&+[2]_{q_s}[u_3^5-u_3^4]e(-p_2^3+p_3^4-p_3^5+P_1(p_3^2))+[2u_3^5-u_2^3]e(-p_2^3+P_1(p_3^4))\nonumber\\
&&+[u_2^2-2u_3^2]e(-p_2^2+p_2^1-p_1^1)+[2]_{q_s}[u_3^3-u_3^2]e(-p_2^2+p_3^2-p_3^3+p_2^1-p_1^1)\nonumber\\
&&+[2u_3^3-u_2^2]e(-p_2^2+P_1(p_3^2))+[u_1^1-u_2^1]e(-p_1^1),\\
e_2&=&[u_2^7]e(P_2)+[u_1^4-u_2^6]e(P_2(p_1^4))+[u_2^5-u_1^3]e(P_2(p_2^5))+[u_2^4-2u_3^5]e(p_2^3-p_2^4+P_2(p_1^2))\nonumber\\
&&+[2]_{q_s}[u_3^6-u_3^5]e(p_3^5-p_3^6-p_2^4+P_2(p_2^3))+[2u_3^6-u_2^4]e(-p_2^4+P_2(p_3^5))\nonumber\\
&&+[u_1^2-u_2^3]e(P_2(p_1^2))+[u_2^2-u_1^1]e(P_2(p_2^2))+[2u_3^2-u_2^1]e(-p_2^1+P_2(p_3^1))\nonumber\\
&&+[2]_{q_s}[u_3^2-u_3^1]e(p_3^1-p_3^2-p_2^1)+[u_2^1-2u_3^1]e(-p_2^1),\\
e_3&=&[u_3^9]e(P_3)+[u_2^8-u_3^8]e(-p_3^8+P_3(p_2^8))+[u_3^8-u_2^7]e(-p_3^8+P(p_2^6))+[u_2^6-u_3^7]e(-p_3^7+P(p_2^6))\nonumber\\
&&+[u_3^7-u_2^5]e(-p_3^7+P_3(p_3^6))+[u_4^3-u_3^6]e(P_3(p_4^3))+[u_3^5+u_3^4-u_4^2]e(-p_4^2+P(p_3^5))\nonumber\\
&&+[u_3^5-u_3^3]e(-p_4^2-p_3^5+p_3^4+P_3(p_2^3))+[u_2^3-u_3^4-u_3^3]e(-p_4^2+P_3(p_2^3))\nonumber\\
&&+[u_3^4+u_3^3-u_2^2]e(-p_4^2+P_3(p_3^2))+[u_3^4-u_3^2]e(-p_4^2+p_3^3+P_3(p_3^4))\nonumber\\
&&+[u_4^2-u_3^3-u_3^2]e(-p_4^2+p_4^1-p_3^1)+[u_3^1-u_4^1]e(-p_3^1),\\
e_4&=&[u_4^1]e(-p_4^1).
\end{eqnarray}}
\end{Prop}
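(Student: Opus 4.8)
\section*{Proof proposal}

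The plan is to derive each $e_i$ by reducing to the single-term formula of Proposition \ref{E} and then transporting it back to the chosen word via the transposition rules of Proposition \ref{rules}. Since $w_0$ is the longest element of $W$, we have $w_0s_i<w_0$ for every $i$, so for each fixed $i$ there is a reduced expression of $w_0$ ending in $s_i$; for that expression the action of $E_i$ is the single operator $[u_i^1]e(-p_i^1)$ of Proposition \ref{E}, written in the rescaled notation of Definition \ref{notation}. This is already visible in the formula $e_4=[u_4^1]e(-p_4^1)$, since $s_4$ is the rightmost letter of our $w_0=3232\,12321\,432312343213234$.

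For each $i$ I would fix a sequence of braid moves carrying the chosen word into a reduced word ending in $s_i$. By the Tits--Matsumoto theorem any two reduced words are connected by such moves, and in type $F_4$ these are exactly the ones listed in Proposition \ref{rules}: the commuting move $s_is_j\leftrightarrow s_js_i$ for disconnected $i,j$, the $A_2$ move $s_is_js_i\leftrightarrow s_js_is_j$ for a single edge, and the $B_2$ move $s_is_js_is_j\leftrightarrow s_js_is_js_i$ across the double edge $\{2,3\}$. Reversing the sequence and applying the corresponding operator transformations term by term, starting from the single-term $e_i$, then produces the stated multi-term expression. Because the word is assembled from the embeddings $B_2\subset B_3\subset F_4$, the prefix $3232$ reproduces the type-$B_2$ operators of Theorem \ref{B2} and the intermediate $B_3$-block computations feed directly into the $F_4$ ones, so a large part of the bookkeeping can be imported from the lower-rank cases rather than redone.

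The substantive content is in how each move enlarges $e_i$. A commuting move only relabels a shift variable. A single-edge move replaces one term $[w]e(-p_w)$ by the two terms $[u]e(-p_u-p_v+p_w)+[v-w]e(-p_v)$, doubling the number of summands and shifting their momentum arguments; this is the source of most of the terms in $e_1,e_2,e_3$ and is governed classically by the identity \eqref{121}. A double-edge move applies the full type-$B_2$ rule, and here the conjugation by the middle factor $G_2$ via \eqref{qsum3}--\eqref{qsum4} is precisely what produces the $[2]_{q_s}$ coefficients appearing in the middle terms of $e_1,e_2,e_3$. Throughout, the exponents are organized by the strings $P_1,P_2,P_3$ and their partial sums $P_i(x)$, so that each new summand is recorded as $e(P_i(x))$ for the appropriate starting point $x$, and verifying that the accumulated momenta match these partial sums is the heart of the computation.

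Two consistency checks accompany the derivation. By Theorem \ref{trans2} the unitary intertwiners are compatible across all reduced words, so the resulting $e_i$ is independent of the chosen sequence of moves, and each $e_i$ is automatically positive essentially self-adjoint, being a unitary conjugate of the manifestly positive single-term operator. As a final check one confirms that the $e_i$ commute correctly with the $K_j$ and $f_j$ of Proposition \ref{FH}, which is immediate from the linear forms in the exponents. The main obstacle is purely the bookkeeping: tracking the growing collection of momentum shifts through the long chain of single- and double-edge moves, and in particular ensuring that the $[2]_{q_s}$ factors attach to exactly the right summands. I would control this by following the trajectory of the index $i$ through $w_0$ from its rightmost occurrence leftward, recording at each braid move only the incremental change to each term's argument, which keeps the computation systematic even for the longest expression, that of $e_3$.
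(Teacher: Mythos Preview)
Your proposal is correct and matches the paper's approach exactly: the paper does not give a standalone proof for this proposition but presents it as the outcome of the general procedure described at the start of Section~\ref{sec:posrep}, namely starting from the single-term formula of Proposition~\ref{E} for a reduced word ending in $s_i$ and applying the transformation rules of Proposition~\ref{rules} (together with Theorem~\ref{trans2} for well-definedness) to reach the chosen word, with the embedding $B_2\subset B_3\subset F_4$ used to organize the computation. Your remarks on the origin of the $[2]_{q_s}$ factors from the $B_2$ move and the consistency checks are also in line with the paper's discussion.
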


\section{Positive representations of $\cU_q(\g_\R)$ for type $G_2$}\label{sec:G2}
Let $w_0=s_2s_1s_2s_1s_2s_1$ where 1 is the long root and 2 is the short root. First let us descirbe the root subgroup $x_1(t)$ and $x_2(t)$ embedded in $SL(7)$, which can be found in \cite{HR}.
\begin{eqnarray}
x_1(t)&=&\veca{1&0&0&0&0&0&0\\0&1&t&0&0&0&0\\0&0&1&0&0&0&0\\0&0&0&1&0&0&0\\0&0&0&0&1&t&0\\0&0&0&0&0&1&0\\0&0&0&0&0&0&1}\\
x_2(t)&=&\veca{1&-t&0&0&0&0&0\\0&1&0&0&0&0&0\\0&0&1&-t&-t^2&0&0\\0&0&0&1&2t&0&0\\0&0&0&0&1&0&0\\0&0&0&0&0&1&t\\0&0&0&0&0&0&1}
\end{eqnarray}
Solving for the matrix coefficients explicitly by Mathematica, we found the relations between the Lusztig's parametrization:
\Eq{x_1(a)x_2(b)x_1(c)x_2(d)x_1(e)x_2(f)=x_2(a')x_1(b')x_2(c')x_1(d')x_2(e')x_1(f')}
(an explicit relation can also be found in \cite[Thm 3.1]{BZ}). From this we can derive the classical principal series representation:
\begin{Prop} The classical principal series representation corresponding to $w_0=s_2s_1s_2s_1s_2s_1$ is given by
\begin{eqnarray}
E_1&=&\del[,f],\\
E_2&=&\frac{ef}{bc}\del[,a]+\frac{3ef}{c^2}\del[,b]+\left(\frac{2f}{d}-\frac{ef}{bc}+\frac{2ef}{cd}\right)\del[,c]+\left(\frac{3f}{e}-\frac{3ef}{c^2}\right)\del[,d]\nonumber\\
&&+\left(1-\frac{2f}{d}-\frac{2ef}{cd}\right)\del[,e]-\frac{3f}{e}\del[,f],
\end{eqnarray}
and the Mellin transformed action on $f(r,s,t,u,v,w)$ is given by
\begin{eqnarray}
E_1:f&\mapsto& (w+1)f(w+1), \\
E_2:f&\mapsto&(1+r)f(r+1,s+1,t+1,v-1,w-1)\nonumber\\
&&+(1+3s-t)f(s+1,t+2,v-1,w-1)\nonumber\\
&&+(1+3u-2v)f(u+1,v+1,w-1)\nonumber\\
&&+(2+2t-2v)f(t+1,u+1,w-1)\nonumber\\
&&+(1+2t-3u)f(t+2,u+1,v-1,w-1)\nonumber\\
&&+(1+v-3w)f(v+1).
\end{eqnarray}
Again, for notational convenience, the unshifted variables are omitted in the argument.
\end{Prop}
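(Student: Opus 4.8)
The plan is to obtain both the differential operators and their Mellin transforms directly from the realization $g\cdot f(h)=\chi_\l(hg)\,f([hg]_+)$ of the minimal principal series, specialized to the raising one-parameter subgroups. The basic mechanism is that for $g=x_i(t)$ and $h\in U_{>0}^+$ the product $hx_i(t)$ stays in $U_{>0}^+$, so its Gauss decomposition has trivial torus part, the character factor $\chi_\l(hx_i(t))$ equals $1$, and hence
\[
E_i\cdot f(h)=\frac{d}{dt}\Big|_{t=0}f\big(hx_i(t)\big).
\]
Here $h$ is written in the Lusztig parametrization attached to $w_0=s_2s_1s_2s_1s_2s_1$, namely $h=x_2(a)x_1(b)x_2(c)x_1(d)x_2(e)x_1(f)$. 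Since the rightmost factor is $x_1(f)$ and $x_1(f)x_1(t)=x_1(f+t)$, the generator $E_1$ is just $\partial_f$, and its Mellin transform $(w+1)f(w+1)$ is the special case of Proposition \ref{E} for the reduced word ending in $s_1$; so $E_1$ requires no further work.

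The content is the computation of $E_2$. Appending $x_2(t)$ on the right, I would use the observation that $hx_2(t)=x_2(a)\big[x_1(b)x_2(c)x_1(d)x_2(e)x_1(f)x_2(t)\big]$, where the bracketed factor is a length-six word of type $s_1s_2s_1s_2s_1s_2$, to which the $G_2$ braid identity $x_1(a)x_2(b)\cdots x_2(f)=x_2(a')x_1(b')\cdots x_1(f')$ of \cite{BZ} applies, with its six arguments specialized to $(b,c,d,e,f,t)$. This rewrites the bracket in $s_2s_1s_2s_1s_2s_1$ normal form; absorbing its leading $x_2$-factor into $x_2(a)$ returns $hx_2(t)$ to Lusztig normal form $x_2(A)x_1(B)x_2(C)x_1(D)x_2(E)x_1(F)$ with $A,\dots,F$ rational in $(a,\dots,f,t)$. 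Differentiating these coordinate functions at $t=0$ and expanding in the coordinate frame $\partial_a,\dots,\partial_f$ produces $E_2$ as the stated first-order operator; equivalently, one solves the linear system $h\,n_2=\sum_j c_j\,\partial_{x_j}h$ for the coefficients $c_j(a,\dots,f)$, where $n_2$ is the $SL(7)$ matrix of the root vector. \textbf{I expect this to be the main obstacle}: the $G_2$ Lusztig transformation is genuinely lengthy, and it is the inversion of the resulting $6\times 6$ Jacobian that generates the intricate coefficients such as $2f/d-ef/bc+2ef/cd$; this is precisely the step for which the equation solver is invoked.

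Finally I would pass to finite differences. In the Mellin normalization of Section \ref{sec:construct} (so that $\partial_{x_j}$ acts as multiplication by (dual variable $+1$) followed by a $+1$ shift, multiplication by $x_j$ is a $-1$ shift, and multiplication by $x_j^{-1}$ a $+1$ shift), every monomial-times-derivative summand of $E_2$ becomes a shift operator with a polynomial coefficient, and collecting all summands that produce the same net shift of $(r,s,t,u,v,w)$ gives the listed action. For instance, on the shift $v\mapsto v+1$ the summand $\partial_e$ contributes $(v+1)$ and $-\tfrac{3f}{e}\partial_f$ contributes $-3w$, whose sum $(1+v-3w)$ reproduces the last term; on the shift $(u,v,w)\mapsto(u+1,v+1,w-1)$ the summands $\tfrac{3f}{e}\partial_d$ and $-\tfrac{2f}{d}\partial_e$ contribute $3(u+1)$ and $-2(v+1)$, summing to $(1+3u-2v)$; and on $(t,u,w)\mapsto(t+1,u+1,w-1)$ the summands $\tfrac{2f}{d}\partial_c$ and $-\tfrac{2ef}{cd}\partial_e$ give $2(t+1)$ and $-2v$, summing to $(2+2t-2v)$. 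The remaining terms follow by the same bookkeeping, which completes the verification.
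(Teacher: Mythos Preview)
Your proposal is correct and follows essentially the same route as the paper: the paper states that the Lusztig braid identity for $G_2$ is obtained ``using an equation solver'' and that the classical principal series formulas are then derived from it, which is exactly your plan of rewriting $hx_2(t)$ via the $s_1s_2s_1s_2s_1s_2\leftrightarrow s_2s_1s_2s_1s_2s_1$ transformation and differentiating at $t=0$; your Mellin bookkeeping also matches the paper's conventions and the sample terms you check are correct. The paper gives no further detail beyond this, so your write-up is in fact more explicit than the original.
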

To get the quantized action, we again rescaled the variables corresponding to whether the index are long ($s,u,w$) or short  ($r,t,v$). Using the notation from Definition \ref{notation} (with $b_s=\frac{b}{\sqrt{3}}$), we found the action as follows.
\begin{Thm}\label{G2} The action of $\cU_q(\g_\R)$ on $L^2(\R^6)$, where $\g$ is of type $G_2$ corresponding to \\${w_0=s_2s_1s_2s_1s_2s_1}$  is given by
\begin{eqnarray}
e_1&=&[w]e(-p_w),\\
e_2&=&[r]e(-p_r-p_s-p_t+p_v+p_w)+[s-t]e(-p_s-2p_t+p_v+p_w)\nonumber\\
&&+[u-2v]e(-p_u-p_v+p_w)+[2]_{q_s}[t-v]e(-p_t-p_u+p_w)\nonumber\\
&&+[2t-u]e(-2p_t-p_u+p_v+p_w)+[v-w]e(-p_v),
\end{eqnarray}
where again $e_i=2\sin\pi b_i^2 E_i$. The action of $F_i$ and $K_i$ are given by Definition \ref{FH}* as before.
\end{Thm}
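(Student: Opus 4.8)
\section*{Proof proposal for Theorem \ref{G2}}

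The plan is to verify that the operators $e_1,e_2$ listed in the theorem, together with the operators $F_i,K_i$ supplied by Proposition \ref{FH}*, define a representation of $\cU_q(\g_\R)$ of type $G_2$, following the same strategy as the proof of Theorem \ref{B2}. First I would record that the long-root generator costs nothing: the chosen reduced word $w_0=s_2s_1s_2s_1s_2s_1$ ends in $s_1$, so Proposition \ref{E} gives $e_1=[w]e(-p_w)$ directly, where $w$ is the rescaled Mellin variable at the rightmost occurrence of the long root. The short-root generator $e_2$ I would obtain by quantizing the six-term classical Mellin action of $E_2$ from the preceding Proposition: each linear weight $\ell$ is promoted to the $q_2$-number $[\ell]$ in the rescaled variables (with $b_s=\frac{b}{\sqrt{3}}$ as in Definition \ref{notation}), each classical shift $f(\cdots)$ to the matching finite-difference operator $e(\cdots)$, and a twist is inserted into the exponents so that every summand acquires the self-adjoint shape recorded in the remark after Theorem \ref{B2}. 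The classical coefficient $2$ in the term $(2+2t-2v)$ becomes the quantum integer $[2]_{q_2}$; exactly as in the $B_2$ computation, where the analogous $[2]_{q_1}$ was produced by a double application of $g_{b_s}$ through \eqref{qsum3}--\eqref{qsum4}, this factor is forced by the ordering of the two shift operators that coalesce upon quantization.

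Positivity and essential self-adjointness of $e_1$ and $e_2$ then hold summand by summand, each term having the form $[u]e(p)$ with $[p,u]=\frac{1}{2\pi i}$ and hence being positive essentially self-adjoint by the remark after Theorem \ref{B2}; the operators $f_i$ and $K_i$ are positive by their construction in Proposition \ref{FH}*.

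Next I would check the defining relations directly. The relations $K_iE_j=q_i^{a_{ij}}E_jK_i$ and $K_iF_j=q_i^{-a_{ij}}F_jK_i$ amount to bookkeeping how the pure exponential $K_i$ in the multiplication variables conjugates the shift operators occurring in $E_j$ and $F_j$, and follow immediately from the Weyl commutation relations. The relations $[E_i,F_j]=\delta_{ij}\frac{K_i-K_i\inv}{q_i-q_i\inv}$ are verified term by term: for $i\neq j$ the cross contributions cancel, while for $i=j$ the surviving commutators collapse to $K_i-K_i\inv$ up to the scalar $q_i-q_i\inv$. The Serre relations \eqref{SerreF} for $F_i$ are already covered by Theorem \ref{Serre}, whose proof was reduced precisely to the explicit $G_2$ expressions recorded in this section.

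The main obstacle is the Serre relation \eqref{SerreE} for the short root $E_2$. Since $a_{21}=-3$, this is the degree-four identity $\sum_{n=0}^{4}(-1)^n\frac{[4]_{q_2}!}{[4-n]_{q_2}![n]_{q_2}!}E_2^{n}E_1E_2^{4-n}=0$, and because $w_0$ is precisely the alternating braid word $s_2s_1s_2s_1s_2s_1$ the $E_1$--$E_2$ interaction is local enough to be checked directly, in the sense of Section \ref{sec:posrep}. Concretely I would commute the single operator $E_1$, a scalar multiple of $[w]e(-p_w)$, past the six shift operators making up $E_2$, using the Weyl relations to collect the scalar prefactors term by term, and then show that the alternating sum of $q_2$-binomial coefficients annihilates each resulting monomial in the shift operators by the $q$-binomial theorem. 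This is a finite but lengthy reordering; the $E_1$-Serre relation (degree two, since $a_{12}=-1$) is a lighter instance of the same scheme. I expect the degree-four cancellation to be the only genuinely delicate step, all the remaining relations being routine manipulations in the Weyl algebra.
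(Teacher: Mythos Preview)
Your proposal is correct and follows essentially the same approach as the paper: the paper gives no explicit proof for Theorem \ref{G2}, but the surrounding context (the classical Proposition immediately before, the proof of Theorem \ref{B2}*, and the remark in Theorem \ref{Serre} that ``the quantum Serre relations for type $G_2$ can be checked directly using the explicit expression given in Section \ref{sec:G2}'') makes clear that the intended argument is exactly the one you outline---quantize the classical Mellin action with the $[2]_{q_2}$ insertion, and verify all relations by direct computation in the Weyl algebra. One small imprecision: your analogy for the origin of $[2]_{q_2}$ via a double application of $g_{b_s}$ is really a feature of the transformation $\Phi$ in the $B_2$ setting, whereas here the factor arises directly from quantizing the classical coefficient $2$ in $(2+2t-2v)$; this does not affect the proof, since the justification is ultimately the direct verification of the relations, but the mechanism is not quite the same.
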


The action corresponding to $w_0=s_1s_2s_1s_2s_1s_2$ can also be computed, and $E_1$ consists of 13 terms. It is unitary equivalent to the above representation via a transformation consisting of 11 quantum dilogarithms with parameters $b$ or $b_s$, and a linear change of variables $T$ with $\det(T)=1$, generalizing Theorem \ref{trans2} . It can instead be obtained from the folding of the positive representations of type $D_4$ described in the next section.

\section{Folding of representations}\label{sec:folding}
By comparing the action of $C_n$ given in Section \ref{sec:Cn} and the action of $D_{n+1}$ given in \cite{Ip}, there is a strong similarity between the action. In fact the action of $E_i$ for $i\geq 2$ are identical. This holds in general due to the principle of folding of Dynkin diagram. 

It is known that the philosophy of folding in the setting of quantum groups is more complicated \cite{BG}. Nevertheless, with the mixture of classical construction, Mellin transformation and quantization described in this paper, we can still obtain the description of the positive representations in the non-simply-laced case from the corresponding unfolded simply-laced type by means of the folding of pinning \cite[1.5]{Lu2}. Explicitly:

\begin{Thm}Let us use the labeling introduced in Section \ref{sec:Uq} and \cite{Ip}.

The positive representations of $C_n$ corresponding to $w_0$ can be obtained from the positive representations of $D_{n+1}$ with $s_1$ in $w_0(C_n)$ replaced by $s_0s_1$ in $w_0(D_{n+1})$.

\begin{center}
  \begin{tikzpicture}[scale=.4,baseline=-1ex]
    \draw[thick,fill=black] (150: 17 mm) circle (.3cm);
    \draw[thick,fill=black] (-150: 17 mm) circle (.3cm);
    \foreach \x in {0,...,4}
    \draw[xshift=\x cm,thick,fill=black] (\x cm,0) circle (.3cm);
    \draw[thick] (150: 3 mm) -- (150: 14 mm);
    \draw[thick] (-150: 3 mm) -- (-150: 14 mm);
    \foreach \y in {0.15,...,2.15}
    \draw[xshift=\y cm,thick] (\y cm,0) -- +(1.4 cm,0);
    \draw[dotted,thick] (6.3 cm,0) -- +(1.4 cm,0);    
    \foreach \z in {2,...,5}
    \node at (2*\z-4,-1) {$\z$};
    \node at (8,-1){$n$};
   \node at (-2,-1.8){$0$};
  \node at (-2, 0){$1$};
  \node at (3, -3){$D_{n+1}$};
  \end{tikzpicture}
$\longmapsto$
  \begin{tikzpicture}[scale=.4,baseline=-1ex]
    \draw[xshift=0 cm,thick] (0 cm, 0) circle (.3 cm);
    \foreach \x in {1,...,5}
    \draw[xshift=\x cm,thick,fill=black] (\x cm,0) circle (.3cm);
    \draw[dotted,thick] (8.3 cm,0) -- +(1.4 cm,0);
    \foreach \y in {1.15,...,3.15}
    \draw[xshift=\y cm,thick] (\y cm,0) -- +(1.4 cm,0);
    \draw[thick] (0.3 cm, .1 cm) -- +(1.4 cm,0);
    \draw[thick] (0.3 cm, -.1 cm) -- +(1.4 cm,0);
    \foreach \z in {1,...,5}
    \node at (2*\z-2,-1) {$\z$};
\node at (10,-1){$n$};
\node at (5, -3){$C_n$};
  \end{tikzpicture}
\end{center}

The positive representations of $B_n$ corresponding to $w_0$ can be obtained from the positive representations of $A_{2n-1}$  with $s_{n-k+1}$ in $w_0(B_n)$ replaced by $s_ks_{2n-k}$ in $w_0(A_{2n-1})$ for $1\leq k\leq n-1$.

\begin{center}
  \begin{tikzpicture}[scale=.4,baseline=-1ex]
\draw[thick,fill=black] (0,0) circle(.3cm);
    \draw[thick,fill=black] (30: 17 mm) circle (.3cm);
    \draw[thick,fill=black] (-30: 17 mm) circle (.3cm);
    \foreach \x in {2,...,5}{
    \draw[xshift=\x cm -0.47cm,thick,fill=black] (\x cm,0.85) circle (.3cm);
     \draw[xshift=\x cm-0.47cm,thick,fill=black] (\x cm,-0.85) circle (.3cm);}
    \draw[thick] (30: 3 mm) -- (30: 14 mm);
    \draw[thick] (-30: 3 mm) -- (-30: 14 mm);
    \foreach \y in {2.15,...,4.15}{
    \draw[xshift=\y cm -0.47cm,thick] (\y cm,0.85) -- +(1.4 cm,0);
    \draw[xshift=\y cm -0.47cm,thick] (\y cm,-0.85) -- +(1.4 cm,0);}
    \draw[dotted,thick] (1.83 cm,0.85) -- +(1.4 cm,0);    
    \draw[dotted,thick] (1.83 cm,-0.85) -- +(1.4 cm,0);  
    \node at (9.53,2) {$1$};
   \node at (7.53,2) {$2$};
   \node at (5.53,2) {$3$};
   \node at (3.53,2) {$4$};
    \node at (0,-1) {$n$};
\node at (9.53,-2) {$2n-1$};
\node at (5, -3){$A_{2n-1}$};
  \end{tikzpicture}
$\longmapsto$
  \begin{tikzpicture}[scale=.4,baseline=-1ex]
    \draw[xshift=0 cm,thick,fill=black] (0 cm, 0) circle (.3 cm);
    \foreach \x in {1,...,5}
    \draw[xshift=\x cm,thick] (\x cm,0) circle (.3cm);
    \draw[dotted,thick] (8.3 cm,0) -- +(1.4 cm,0);
    \foreach \y in {1.15,...,3.15}
    \draw[xshift=\y cm,thick] (\y cm,0) -- +(1.4 cm,0);
    \draw[thick] (0.3 cm, .1 cm) -- +(1.4 cm,0);
    \draw[thick] (0.3 cm, -.1 cm) -- +(1.4 cm,0);
    \foreach \z in {1,...,5}
    \node at (2*\z-2,-1) {$\z$};
\node at (10,-1){$n$};
\node at (5, -3){$B_n$};
  \end{tikzpicture}
\end{center}

The positive representations of $F_4$ corresponding to $w_0$ can be obtained from the positive representations of $E_6$ with $s_1,s_2,s_3,s_4$ in $w_0(F_4)$ replaced by $s_1s_5,s_2s_4,s_3,s_0$ in $w_0(E_6)$.

\begin{center}  
\begin{tikzpicture}[scale=.4,baseline=-1ex]
\draw[thick,fill=black] (2,0) circle(.3cm);
\draw[thick,fill=black] (0,0) circle(.3cm);
    \draw[thick,fill=black] (150: 17 mm) circle (.3cm);
    \draw[thick,fill=black] (-150: 17 mm) circle (.3cm);
    \draw[xshift=-2 cm +0.47cm,thick,fill=black] (-2 cm,0.85) circle (.3cm);
     \draw[xshift=-2 cm+0.47cm,thick,fill=black] (-2 cm,-0.85) circle (.3cm);
    \draw[xshift=-0.85 cm,thick] (1.15 cm,0) -- +(1.4 cm,0);
    \draw[thick] (150: 3 mm) -- (150: 14 mm);
    \draw[thick] (-150: 3 mm) -- (-150: 14 mm);
    \draw[xshift=1.15 cm -0.47cm,thick] (-3.85 cm,0.85) -- +(1.4 cm,0);
    \draw[xshift=1.15 cm -0.47cm,thick] (-3.85  cm,-0.85) -- +(1.4 cm,0);
   \node at (-3.47,2) {$1$};
   \node at (-1.47,2) {$2$};
    \node at (0,-1) {$3$};
   \node at (-1.47,-2) {$4$};
   \node at (-3.47,-2) {$5$};
\node at (2,-1) {$0$};
\node at (1, -3){$E_6$};
  \end{tikzpicture}
$\longmapsto$
  \begin{tikzpicture}[scale=.4,baseline=-1ex]
    \draw[thick] (-2 cm ,0) circle (.3 cm);
	\node at (-2,-1) {$1$};
    \draw[thick] (0 ,0) circle (.3 cm);
	\node at (0,-1) {$2$};
    \draw[thick,fill=black] (2 cm,0) circle (.3 cm);
	\node at (2,-1) {$3$};
    \draw[thick,fill=black] (4 cm,0) circle (.3 cm);
	\node at (4,-1) {$4$};
    \draw[thick] (15: 3mm) -- +(1.5 cm, 0);
    \draw[xshift=-2 cm,thick] (0: 3 mm) -- +(1.4 cm, 0);
    \draw[thick] (-15: 3 mm) -- +(1.5 cm, 0);
    \draw[xshift=2 cm,thick] (0: 3 mm) -- +(1.4 cm, 0);
\node at (1, -3){$F_4$};
  \end{tikzpicture}
\end{center}

Finally, the positive representations of $G_2$ can be obtained from the positive representations of $D_4$ with $s_1$ in $w_0(G_2)$ replaced by $s_0s_1s_3$ in $w_0(D_4)$.

\begin{center}  
\begin{tikzpicture}[scale=.4,baseline=-1ex]
\draw[thick,fill=black] (0,0) circle(.3cm);
    \draw[thick,fill=black] (150: 17 mm) circle (.3cm);
    \draw[thick,fill=black] (-180: 17 mm) circle (.3cm);
    \draw[thick,fill=black] (-150: 17 mm) circle (.3cm);
    \draw[xshift=-0.85 cm,thick] (0.85 cm,0) -- +(-1.4 cm,0);
    \draw[thick] (150: 3 mm) -- (150: 14 mm);
    \draw[thick] (-150: 3 mm) -- (-150: 14 mm);
   \node at (-2.4,1) {$0$};
    \node at (0,-1) {$2$};
   \node at (-2.4,-1) {$3$};
\node at (-2.4,0) {$1$};
\node at (-1, -3){$D_4$};
  \end{tikzpicture}
$\longmapsto$
  \begin{tikzpicture}[scale=.4,baseline=-1ex]
    \draw[thick] (0 ,0) circle (.3 cm);
    \draw[thick,fill=black] (2 cm,0) circle (.3 cm);
    \draw[thick] (30: 3mm) -- +(1.5 cm, 0);
    \draw[thick] (0: 3 mm) -- +(1.4 cm, 0);
    \draw[thick] (-30: 3 mm) -- +(1.5 cm, 0);
\node at (0,-1){$1$};
\node at (2,-1){$2$};
\node at (1, -3){$G_2$};
  \end{tikzpicture}
\end{center}

\end{Thm}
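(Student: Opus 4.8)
The plan is to make the classical \emph{folding} of Dynkin diagrams precise at the level of the explicit positive representations and then to compare operators term by term. Each of the pairs $(A_{2n-1},B_n)$, $(D_{n+1},C_n)$, $(E_6,F_4)$, $(D_4,G_2)$ is related by a diagram automorphism $\s$ of the simply-laced diagram, of order $2$ in the first three cases and of order $3$ for $D_4$, whose non-trivial orbits are exactly the sets $\{k,2n-k\}$, $\{0,1\}$, $\{1,5\}$ and $\{2,4\}$, and $\{0,1,3\}$ occurring in the statement, the remaining nodes being $\s$-fixed. Because $\s$ is admissible, the nodes in one orbit are pairwise non-adjacent, so the corresponding simple reflections commute, their product $\prod_j s_j$ over the orbit is an involution, and it plays the role of the reflection attached to the folded node. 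This is precisely what legitimizes the substitutions $s_i\mapsto\prod_j s_j$ appearing in the statement.

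First I would fix, on the simply-laced side, a $\s$-adapted reduced word for $w_0$: namely the word obtained from the chosen reduced word of $w_0$ on the folded side by replacing each letter $s_i$ with the commuting product $\prod_j s_j$ over the $\s$-orbit of $i$, in the grouped pattern prescribed in the statement. A length count (e.g. $l(w_0(D_{n+1}))=n(n+1)=l(w_0(C_n))+ \#\{\text{occurrences of }s_1\}$) shows that this word is reduced of the correct length $l(w_0)$. With this choice the Lusztig coordinates of the simply-laced group are indexed orbit by orbit, and I would impose the identification of the coordinates within each orbit, restricting the representation of \cite{Ip} to the $\s$-diagonal. Since a $\s$-orbit root is an orthogonal sum $\sum_j\a_j$, it is longer than a fixed simple root; hence the orbit-nodes give the long roots of the folded diagram while the $\s$-fixed nodes give the short roots, and the rescaling of the short variables by $\sqrt2$ (resp. $\sqrt3$) of Definition \ref{notation}, together with $b_s=\frac{b}{\sqrt2}$ (resp. $\frac{b}{\sqrt3}$) of Definition \ref{qi}, is exactly the normalization forced by this length ratio.

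Next I would carry out the comparison generator by generator, quoting the explicit formulas for $A_{2n-1},D_{n+1},E_6,D_4$ from \cite{Ip} and matching them against Sections \ref{sec:Bn}, \ref{sec:Cn}, \ref{sec:F4} and \ref{sec:G2}. For a $\s$-fixed node this is immediate: the operators $E_i$ with $i\geq 2$ for $C_n$ already coincide termwise with those of $D_{n+1}$, and likewise in the other cases, so only relabelling is needed. For a generator attached to a folded node the simply-laced side contributes one summand per reflection in the orbit; after the identification these summands collapse onto the same shift operator $e(\cdots)$, and summing over the orbit produces exactly the quantum-integer coefficients $[2]_{q_s}$ (and the $[3]$-type factors for $G_2$) appearing in the folded formulas -- these are the ``quantized multiples'' of the statement. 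The actions of $F_i$ and $K_i$ then follow from the same identification applied to Proposition \ref{FH}*, because folding the Cartan matrix replaces each identified off-diagonal entry $-1$ by $-2$ (resp. $-3$), and this is reproduced by summing the coefficients $a_{i,r(j)}$ over the orbit.

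The hard part will be the behaviour at the folded node. There one must verify that, after identification, the several $q$-commuting summands coming from the orbit recombine into the correct positive essentially self-adjoint operator with the right quantum-integer coefficients, rather than into some spurious combination; this is where the commutativity of the orbit reflections and the precise ordering of the chosen $w_0$ are used. The cleanest route is to reduce, via the intertwiners of Theorem \ref{trans2} and \cite[Theorem 5.2]{Ip}, to the rank-two situations $B_2=C_2$ and $G_2$, where the required coincidence of coefficients -- including the appearance of $[2]_{q_1}$ from the double edge as in the proof of Theorem \ref{trans2} -- can be read off directly from Theorems \ref{B2} and \ref{G2}. Once these rank-two cases are settled, positivity and essential self-adjointness are inherited automatically from Section \ref{sec:posrep}, and the general case propagates along the edges of the folded diagram using Proposition \ref{rules}, completing the comparison.
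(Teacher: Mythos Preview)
Your approach is the same as the paper's: choose a $\sigma$-adapted reduced word on the simply-laced side, identify the Lusztig coordinates within each $\sigma$-orbit, rescale the variables and the parameter $q$ according to short/long roots, and compare the resulting operators with those of Sections~\ref{sec:Bn}--\ref{sec:G2}. Your justification of the reduced-word length and your remark that the $\sigma$-fixed generators match on the nose are correct and in fact more explicit than the paper's own argument.

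There is, however, one genuine misconception at the folded node. You write that ``summing over the orbit produces exactly the quantum-integer coefficients $[2]_{q_s}$'' and later propose to verify that the ``$q$-commuting summands coming from the orbit recombine into the correct \dots\ operator with the right quantum-integer coefficients''. This is not what happens. After identifying $u_0^k=u_1^k$, $p_0^k=p_1^k$ (say in $D_{n+1}$), the orbit summands become literally equal, not $q_s^2$-commuting, so their sum carries the \emph{classical} integer coefficient $2$ (or $3$ for $G_2$), not $[2]_{q_s}$. The paper then replaces this $2$ by $[2]_{q_s}$ \emph{by hand}:
\[
2\,[v-t]\,e(p_t-p_u-p_v)\ \longmapsto\ [2]_{q_s}\,[v-t]\,e(p_t-p_u-p_v),
\]
and this ad hoc quantization is precisely the ``up to some quantized multiples'' caveat in the introduction's formulation of the theorem. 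So there is nothing to ``verify'' via a $q$-binomial recombination; rather, the folded simply-laced operator is only the \emph{classical limit} of the correct non-simply-laced operator, and the passage $2\mapsto[2]_{q_s}$ is a definition whose correctness is checked a posteriori against the commutation relations (already established in Sections~\ref{sec:B2}--\ref{sec:G2}). Once you adjust this step, your argument coincides with the paper's.
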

\begin{proof} First we rescaled $q$ so that for non-paired index $i$, the action of $E_i$ for $w_0=w_{l-1}s_i$ matches with the correct $q$. For example, we have to use
$\cU_{q_s}(D_{n+1})$ where $q_s=q^{\frac{1}{2}}$ so that it corresponds to the short roots in $\cU_q(C_n)$.

Next we identify the variables corresponding to the paired roots in the simply-laced case. For example we let $u_0^k=u_1^k$ and $p_0^k=p_1^k$ in $D_{n+1}$. These will simplify certain expressions, and occasionally produce identical terms.

Then we replace the parameter $b$ by the appropriate $b_i$ according to whether they correspond to short or long roots, much like the procedure described before Definition \ref{notation}.

Finally, for the identical terms, we quantize the multiples with $q_s$. For example
$$2[v-t]e(p_t-p_u-p_v)\longmapsto [2]_{q_s}[v-t]e(p_t-p_u-p_v).$$
In the case of type $G_2$, the representation corresponding to $w_0=s_1s_2s_1s_2s_1s_2$ has a factor 6 which is quantized to $[3]_{q_s}!$.
\end{proof}

\section{Transcendental relations}\label{sec:langlands}
We recall from the simply-laced case that, if we define the following operators $\til[e_i],\til[f_i],\til[K_i]$ as:
\begin{eqnarray}
\til[e_i]&=&(e_i)^{\frac{1}{b^2}}\\
\til[f_i]&=&(f_i)^{\frac{1}{b^2}}\\
\til[K_i]&=&(K_i)^{\frac{1}{b^2}}\\
\end{eqnarray}
Then the operators are precisely the same as replacing $b$ with $b\inv$, so that the operators
\begin{eqnarray*}
\til[E_i]&=&(2\sin\pi b^{-2})\inv e_i,\\
\til[F_i]&=&(2\sin\pi b^{-2})\inv f_i
\end{eqnarray*}
and $\til[K_i]$ generates a representation of $\cU_{\til[q]}(\g_\R)$ where $\til[q]=e^{\pi ib^{-2}}$. In particular, the transcendental relations means that
\begin{eqnarray}
\left([w]e(-p_w)\right)^{\frac{1}{b^2}}&=&\left(e^{\pi b(w-2p_w)}+e^{\pi b(-w-2p_w)}\right)^{\frac{1}{b^2}}\nonumber\\\label{E1}
&=&e^{\pi b\inv(w-2p_w)}+e^{\pi b\inv(-w-2p_w)}:=([w]e(-p_w))_*
\end{eqnarray}
which is due to Lemma \ref{qbi}.

However in the case of non-simply-laced type, the tilde generators no longer generates $\cU_{\til[q]}(\g_\R)$, but rather short roots become long roots and vice versa. More precisely, we have the following Theorem.

\begin{Thm}\label{main} Let $\til[q]=e^{\pi ib_s^{-2}}$ and define $\til[q]_i=e^{\pi ib_i^{-2}}$. Define the operators
\begin{eqnarray}
\til[e_i]&:=&(e_i)^{\frac{1}{b_i^2}}\\
\til[f_i]&:=&(f_i)^{\frac{1}{b_i^2}}\\
\til[K_i]&:=&(K_i)^{\frac{1}{b_i^2}}
\end{eqnarray}
where as before 
\begin{eqnarray}
e_i=2\sin\pi b_i^2E_i,&\tab& f_i=2\sin\pi b_i^2F_i,\\
\til[e_i]=2\sin\pi b_i^{-2}\til[E_i],&\tab&\til[f_i]=2\sin\pi b_i^{-2}\til[F_i].
\end{eqnarray}
Then the generators of $\cU_{\til[q]}({}^L\g_\R)$ are represented by the operators $\til[E_i],\til[F_i]$ and $\til[K_i]$, where ${}^L\g_\R$ is defined by replacing long roots with short roots and short roots with long roots in the Dynkin diagram of $\g$.
\end{Thm}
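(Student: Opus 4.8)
The plan is to verify, generator by generator, that the operators $\til[E_i], \til[F_i], \til[K_i]$ obtained by the transcendental power $\frac{1}{b_i^2}$ satisfy the defining relations of $\cU_{\til[q]}({}^L\g_\R)$, where the Langlands dual is read off by swapping long and short roots. The essential engine is Lemma \ref{qbi}: whenever two positive essentially self-adjoint operators $u,v$ satisfy $uv=q_i^2vu$, we have $(u+v)^{1/b_i^2}=u^{1/b_i^2}+v^{1/b_i^2}$. The key point is that a monomial term $[w]e(-p_w)$ in our generators is exactly a sum of two positive operators $e^{\pi b_i(w-2p_w)}+e^{\pi b_i(-w-2p_w)}$ whose ratio is a fixed power of $q_i$, so applying Lemma \ref{qbi} repeatedly turns a single term raised to the power $\frac{1}{b_i^2}$ into the same expression with $b_i$ replaced by $b_i\inv$, as illustrated by \eqref{E1}. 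I would organize the argument so that each step isolates what changes when $b_i \to b_i\inv$.

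First I would treat the single-term generators, namely $E_i$ at a root placed at the right end of $w_0$, and the Cartan generators $K_i$. For these the claim is immediate from Lemma \ref{qbi} (or from Lemma \ref{qbiEq} directly for the $K_i$, which are single exponentials): raising to $\frac{1}{b_i^2}$ simply rescales $b_i \mapsto b_i\inv$ in every exponent, giving the tilde operator with $q_i$ replaced by $\til[q]_i = e^{\pi i b_i^{-2}}$. The crucial observation is the \emph{interchange of roles}: since $b_s = b/\sqrt{2}$ (resp. $b/\sqrt{3}$) and $\til[q]=e^{\pi i b_s^{-2}}$, the dual quantum parameter attached to a short root of $\g$ becomes $e^{\pi i b_s^{-2}}=\til[q]$, which is the \emph{long}-root parameter of ${}^L\g_\R$, and conversely a long root of $\g$ produces $e^{\pi i b^{-2}}=\til[q]^{1/2}$ (resp. $\til[q]^{1/3}$), the short-root parameter of the dual. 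So the transcendental power automatically sends the normalization $q_i$ to the correct dual normalization $\til[q]_i$ under the long/short swap, which is exactly the content of defining ${}^L\g_\R$ by exchanging node colours in the Dynkin diagram.

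Next I would handle the multi-term generators $E_i$ and all the $F_i$, whose explicit forms appear in Sections \ref{sec:B2}, \ref{sec:posrep}, and \ref{sec:G2}. Here each generator is a sum $\sum_k [L_k(\bv)]e(P_k)$ of several monomial terms, possibly with quantized multiplicities such as $[2]_{q_s}$. The strategy is to establish that the consecutive terms $q_i$-commute in the precise sense required by Lemma \ref{qbi}, i.e. that the pair $(u,v)$ formed by any two adjacent summands, or by the two pieces of a single summand, satisfies the hypothesis $uv = q_i^{2m}vu$ for the appropriate power. This amounts to computing the Weyl-commutation of the exponential shift operators $e(P_k)$ against the linear weights $L_k(\bv)$, which follows from the canonical relation $[p,u]=\frac{1}{2\pi i}$ recorded in Theorem \ref{B2}. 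Once these commutation powers are recorded, iterating Lemma \ref{qbi} across the sum yields $(\sum_k [L_k]e(P_k))^{1/b_i^2}=\sum_k ([L_k]e(P_k))_*$, the $b_i\mapsto b_i\inv$ version term by term, so $\til[E_i}]$ and $\til[F_i]$ have precisely the shape of the $\cU_{\til[q]}({}^L\g_\R)$ generators.

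The main obstacle I anticipate is twofold and both pieces sit inside the multi-term step. First, the hypothesis of Lemma \ref{qbi} requires a \emph{pairwise} $q_i^2$-commutation, but in the raw generators adjacent terms may commute by other powers, or the $[2]_{q_s}$ multiplicities signal that certain pairs satisfy $uv=q_i^4 vu$ rather than $q_i^2 vu$; I would need the refined identities \eqref{qsum3}-\eqref{qsum4} and, where the naive binomial splitting fails, a careful regrouping of the sum into blocks on which Lemma \ref{qbi} applies cleanly, exactly the phenomenon already visible in the $[2]_{q_1}$ factor appearing in \eqref{e2C}. Second, after raising to the power $\frac{1}{b_i^2}$ the quantized integer multiples must transform correctly: a factor $[2]_{q_s}=q_s+q_s\inv$ should become the dual $[2]_{\til[q]_i}$, and verifying that the power map sends these coefficients to their dual counterparts (rather than spoiling self-adjointness or positivity) is the delicate bookkeeping. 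I would resolve it by checking, on the explicit $B_2$ and $G_2$ data, that each block is a genuine $q_i$-commuting sum of positive operators so that Lemma \ref{qbi} is literally applicable, and then invoking the change-of-reduced-word unitary $\Phi$ of Theorem \ref{trans2} to transport the verified identities to all reduced expressions, and the folding Theorem of Section \ref{sec:folding} to reduce the higher-rank cases $B_n, C_n, F_4$ to the rank-two computations already in hand.
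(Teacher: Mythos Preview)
Your overall strategy matches the paper's, and your handling of $K_i$, of the single-term $E_i$, and of all the $F_i$ is exactly right: in each of those cases the summands genuinely pairwise $q_i^2$-commute, so Lemma \ref{qbi} applies inductively and the transcendental power really is ``term by term'', i.e. $b_i\mapsto b_i\inv$ in each exponential.

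The gap is in your treatment of the multi-term $E_i$ at a short root in the doubly- (or triply-) laced case. Your displayed claim
\[
\Bigl(\sum_k [L_k]e(P_k)\Bigr)^{1/b_i^2}=\sum_k \bigl([L_k]e(P_k)\bigr)_*
\]
is \emph{false} there, and this is precisely where the long/short swap is born. Take $e_1$ in type $B_2$ with $w_0=s_1s_2s_1s_2$: it has three $[\cdot]e(\cdot)$ summands and \emph{no} $[2]_{q_s}$ coefficient. When you split each summand into $A_k^{+}+A_k^{-}$ and reorder as $A_3^+A_2^+A_1^+A_1^-A_2^-A_3^-$, the pairs $(A_3^+,A_2^+)$ and $(A_2^-,A_3^-)$ commute by $q_s^{4}$, not $q_s^{2}$. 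Lemma \ref{qbi} then gives
\[
(A_3^\pm+A_2^\pm)^{1/b_s^2}=\bigl((A_3^\pm)^{1/2b_s^2}+(A_2^\pm)^{1/2b_s^2}\bigr)^2,
\]
which upon expansion produces \emph{extra cross terms} carrying a factor $[2]_{\til[q_2]}$. The result is a four-term expression, not three, and it matches $e_2$ of type $C_2$ for the \emph{opposite} reduced word $s_2s_1s_2s_1$ (after the obvious relabelling of variables). So the $[2]$ coefficients are not pre-existing objects that ``transform correctly'' under the power map; they are \emph{created} by it. This creation of new terms is exactly what turns a short-root generator into a long-root generator of the dual, and your term-by-term formulation hides the mechanism.

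A second, smaller point: the paper does not reduce $B_n,C_n,F_4$ to rank two via folding or via the unitary $\Phi$ as you propose. Instead it uses the freedom in the reduced expression directly: for any pair $i,j$ one chooses $w_0$ ending in $s_is_js_i$ (simply-laced edge) or $s_is_js_is_j$ (double edge), so that the relevant $E_i,E_j$ are literally the rank-two operators already analysed, and the relations with $F_j,K_j$ follow from their universal form in Proposition \ref{FH}*. This avoids having to track how the $q^4$-blocks behave under $\Phi$ or under the folding identifications. The $G_2$ case is then checked separately along the same lines.
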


This is a nontrivial result, since other than exchanging $q_i$ with $\til[q_i]$, the corresponding quantum Serre relations also get interchanged, and there is no classical analogue of the above transcendental relations.

To give the proof of Theorem \ref{main}, let us introduce the following notation.

\begin{Def}Let us denote by \Eq{([u_s+u_l]e(p_s+p_l))_*} the same notation as in Definition \ref{notation}, however with $b$ and $b_s$ replaced by $b\inv$ and $b_s\inv$ respectively. 
\end{Def}

We begin with a lemma.

\begin{Lem}\label{B2trans} Let $E_i, F_i,K_i$ be the generators of $\cU_q(\g_\R)$ of type $B_2$ where $i=1$ corresponds to the short root, and $i=2$ corresponds to the long root. Then we have
\begin{eqnarray*}
q_1=e^{\pi ib_s^2}=q^{\frac{1}{2}},&\tab& q_2=e^{\pi ib^2}=q,\\
\til[q_1]=e^{\pi ib_s^{-2}}=\til[q],&\tab&\til[q_2]=e^{\pi ib^{-2}}=\til[q]^{\frac{1}{2}},
\end{eqnarray*}
so that $\til[E_i],\til[F_i],\til[K_i]$ generates $\cU_{\til[q]}(\g_\R)$ of type $C_2$ where $i=1$ corresponds to the long root and $i=2$ corresponds to the short root.
\end{Lem}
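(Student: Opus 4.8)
The goal of Lemma \ref{B2trans} is essentially bookkeeping about how the quantum parameters transform under the power operation $x \mapsto x^{1/b_i^2}$, and to verify that the resulting generators indeed satisfy the defining relations of $\cU_{\til[q]}(\g_\R)$ of type $C_2$. The plan is to split the proof into two parts: first confirm the parameter identities, then verify that $\til[E_i], \til[F_i], \til[K_i]$ satisfy the $C_2$ quantum group relations with the roots interchanged.

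\emph{Parameter identities.} First I would record the base conventions. By Definition \ref{qi} for type $B_2$, the long root has $b_l = b$ so $q_2 = e^{\pi i b^2} = q$, and the short root has $b_s = b/\sqrt{2}$, giving $q_1 = e^{\pi i b_s^2} = e^{\pi i b^2/2} = q^{1/2}$. The tilde parameters come from the definition $\til[q]_i = e^{\pi i b_i^{-2}}$, so $\til[q]_1 = e^{\pi i b_s^{-2}} = \til[q]$ (matching the convention $\til[q] = e^{\pi i b_s^{-2}}$ from Theorem \ref{main}), while $\til[q]_2 = e^{\pi i b^{-2}}$. Since $b_s^{-2} = 2b^{-2}$, we get $\til[q]_2 = e^{\pi i b^{-2}} = e^{\frac{1}{2}\pi i b_s^{-2}} = \til[q]^{1/2}$. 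These are the four displayed identities, and each follows by direct substitution of $b_s = b/\sqrt 2$.

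\emph{Interchange of roots.} The heart of the lemma is the observation that the relative sizes of the quantum parameters get swapped. For $\cU_q(\g_\R)$ of type $B_2$ the short root carries the ``smaller'' parameter $q_1 = q^{1/2}$ and the long root carries $q_2 = q$. After applying the transcendental power, the tilde side has $\til[q]_2 = \til[q]^{1/2}$ attached to index $2$ and $\til[q]_1 = \til[q]$ attached to index $1$; so relative to the base $\til[q]$, it is now index $2$ that plays the role of the short root and index $1$ the long root. This is exactly the statement that ${}^L\g$ of type $B_2$ is type $C_2$ with short and long roots exchanged. To make this rigorous I would invoke Lemma \ref{qbi}: applying $(-)^{1/b_i^2}$ to each positive self-adjoint summand of $e_i, f_i, K_i$ (written in the $[\,\cdot\,]e(\cdot)$ notation of Definition \ref{notation}) replaces $b_i$ by $b_i^{-1}$ termwise, exactly as in equation \eqref{E1}, so that $\til[E_i], \til[F_i]$ are obtained from $e_i, f_i$ by the substitution $b \mapsto b^{-1}$ applied to the explicit type-$B_2$ formulas of Theorem \ref{B2}. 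One then checks that these substituted operators satisfy the $C_2$ relations by comparing with the explicit type-$C_2$ (i.e.\ $w_0 = s_2s_1s_2s_1$) formulas already established in Theorem \ref{B2}$^*$, after accounting for the rescaling of the short-root variables.

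\emph{Main obstacle.} I expect the routine parameter arithmetic to be straightforward; the genuine content — and the hard part — is justifying the termwise application of Lemma \ref{qbi} to operators that are \emph{sums} of several noncommuting exponential terms. Lemma \ref{qbi} gives $(u+v)^{1/b^2} = u^{1/b^2} + v^{1/b^2}$ only under the hypothesis $uv = q^2 vu$, so one must verify that within each generator the relevant pairs of summands satisfy the correct $q$-commutation (and in particular that the factor $[2]_{q_1}$ appearing in the $e_2$ action of type $C_2$ behaves correctly under the power map, using the doubly-$q$-commuting relations \eqref{qsum3}--\eqref{qsum4}). Managing this commutation bookkeeping, and confirming that the interchanged Serre relations of type $C_2$ hold for the tilde generators (which follows from Theorem \ref{Serre} applied to the dual data), is where the care is needed; everything else reduces to substitution.
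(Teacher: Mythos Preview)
Your outline correctly locates the difficulty, but the central step for $e_1$ is not right as stated. You claim that applying $(-)^{1/b_i^2}$ ``replaces $b_i$ by $b_i^{-1}$ termwise, exactly as in equation \eqref{E1}''; this is precisely what does \emph{not} happen for $e_1$. The $B_2$ expression \eqref{e1B} has three $[\cdot]e(\cdot)$ terms, whereas the target $C_2$ expression \eqref{e2C} has four (one carrying the coefficient $[2]_{q_1}$). So the power map cannot be a termwise substitution: the extra term must be \emph{produced} by the operation itself, not merely preserved.

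The paper's argument makes the mechanism explicit. Writing $e_1=\sum_{k=1}^3(A_k^-+A_k^+)$ as six exponential monomials and reordering them as $A_3^+,A_2^+,A_1^+,A_1^-,A_2^-,A_3^-$, one checks pairwise commutation: most adjacent pairs $q_s^2$-commute, but $(A_3^+,A_2^+)$ and $(A_2^-,A_3^-)$ are only $q_s^4$-commuting. For such a pair Lemma \ref{qbi} applies with exponent $1/(2b_s^2)$ rather than $1/b_s^2$, so that
\[
(A_3^++A_2^+)^{1/b_s^2}=\bigl((A_3^+)^{1/(2b_s^2)}+(A_2^+)^{1/(2b_s^2)}\bigr)^2,
\]
and it is the \emph{squaring} that creates the cross term with coefficient $[2]_{\til[q_2]}$. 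The eight resulting monomials then regroup into four $[\cdot]e(\cdot)$ terms which, under the relabelling $(t,u,v,w)\leftrightarrow(w,v,u,t)$, match \eqref{e2C} with the $b$'s inverted. Your reference to \eqref{qsum3}--\eqref{qsum4} is slightly off target: those are $g_b$-conjugation identities used to build the unitary $\Phi$ in Theorem \ref{trans2}, not the tool for the transcendental power; what is needed here is Lemma \ref{qbi} applied at the halved exponent together with the subsequent square.
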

\begin{proof} Let us choose the positive representation of $B_2$ corresponding to $w_0=s_1s_2s_1s_2$ given by Theorem \ref{B2}*. Then the transcendental relations for $E_2$ is clear from \eqref{E1},
$$([w]e(-p_w))^{\frac{1}{b^2}}=e^{\pi b\inv(w-2p_w)}+e^{\pi b\inv(-w-2p_w)}:=([w]e(-p_w))_*.$$

The transcendental relations for $E_1$ is less trivial. Let $q_s=q_1$ and write the terms in \eqref{e1B} as
\begin{eqnarray*}
e_1&=&[t]e(-p_t-p_u+p_w)+[u-v]e(-p_u-p_v+p_w)+[v-w]e(-p_v)\\
&=&(A_1^-+A_1^+)+(A_2^-+A_2^+)+(A_3^-+A_3^+)
\end{eqnarray*}
where each bracket corresponds to the two terms in $[-]e(-)$ so that 
$$A_i^+A_i^-=q_s^2A_i^-A_i^+.$$ 
Then by direct calculation we can write the above sum as
$$e_1=A_3^++A_2^++A_1^++A_1^-+A_2^-+A_3^-$$
so that each term $q_s^2$ commute with all other terms to their right, except for $A_3$ and $A_2$ where we have instead
$$A_3^+ A_2^+=q_s^{4}A_2^+ A_3^+,\tab A_2^- A_3^-= q_s^{4}A_3^-A_2^-,$$
Hence by Lemma \ref{qbi}, we obtain
\begin{eqnarray*}
(e_1)^{\frac{1}{b_s^2}}&=&(A_3^++A_2^++A_1^++A_1^-+A_2^-+A_3^-)^{\frac{1}{b_s^2}}\\
&=&(A_3^++A_2^+)^{\frac{1}{b_s^2}}+(A_1^++A_1^-+A_2^-+A_3^-)^{\frac{1}{b_s^2}}\\
&=&(A_3^++A_2^+)^{\frac{1}{b_s^2}}+(A_1^++A_1^-)^{\frac{1}{b_s^2}}+(A_2^-+A_3^-)^{\frac{1}{b_s^2}}\\
&=&({A_3^+}^{\frac{1}{2b_s^2}}+{A_2^+}^{\frac{1}{2b_s^2}})^2+({A_1^+}^{\frac{1}{b_s^2}}+{A_1^-}^{\frac{1}{b_s^2}})+({A_2^-}^{\frac{1}{2b_s^2}}+{A_3^-}^{\frac{1}{2b_s^2}})^2\\
&=&{A_3^+}^{\frac{1}{b_s^2}}+[2]_{\til[q_2]}\til[q_2]\inv A_3^+A_2^++{A_2^+}^{\frac{1}{b_s^2}}+{A_1^+}^{\frac{1}{b_s^2}}\\
&&+{A_3^-}^{\frac{1}{b_s^2}}+[2]_{\til[q_2]}\til[q_2] A_3^-A_2^-+{A_2^-}^{\frac{1}{b_s^2}}+{A_1^-}^{\frac{1}{b_s^2}}\\
&=&([2u-v]e(2p_w-p_v-2p_u))_*+[2]_{\til[q_2]}([u-w]e(p_w-p_v-p_u))_*\\
&&+([v-2w]e(-p_v))_*+[t]e(2p_w-2p_u-p_t))_*\nonumber\\
\end{eqnarray*}
Upon identification $(t,u,v,w)\corr (w,v,u,t)$ we see that this is exactly the expression \eqref{e2C} for the long root element $E_2$ of $C_2$ with $q$ replaced by $\til[q]$.

Finally, the operators $F_1,K_1\corr F_2,K_2$, $q_i\corr \til[q_i]$ under the transcendental relations. The transcendental relations of $F_i$ follow also from simple application of Lemma \ref{qbi} and the transcendental relations for $K_i$ are trivial.
\end{proof}

\begin{proof}(Theorem \ref{main}) In general, the transcendental relations for $F_i,K_i$ can be checked directly from Definition \ref{FH}*. Since each term of $f_i$ is $q_i^2$ commuting with each other, we can apply Lemma \ref{qbi} and induction to conclude that $f_i^{\frac{1}{b_i^2}}$ has exactly the same expression with $b_i$ replaced by $b_i\inv$. Hence under the scaling of the short root it is equivalent to the Cartan matrix $(a_{ij})$ being transposed.

For the action of $\til[E_i]$, it suffices to choose a simple reduced expression of $w_0$ in order to check the commutation relation, much like proof given in \cite{Ip}. In particular, we can choose the reduced expression of $w_0$ that ends with $s_is_js_i$ for simply-laced connecting root indices, or $s_is_js_is_j$ for doubly-connected root indices. Then we see from \eqref{E1} and Lemma \ref{B2trans} that the relations between $\til[E_i],\til[F_i],\til[K_i]$ and $\til[E_j],\til[F_j], \til[K_j]$ hold as generators of $\cU_{\til[q]}({}^L\g_\R)$.

Finally, the action $\til[E_1]$ and $\til[E_2]$ for $\cU_{\til[q]}(\g_\R)$ of type $G_2$ corresponding to $w_0=s_2s_1s_2s_1s_2s_1$ from Theorem \ref{G2} can be checked along the same line. One needs to apply appropriate quantum dilogarithm transformations using Lemma \ref{qsum} to simplify the expression, and apply the transcendental relation to the corresponding terms. Then reapply the inverse transformations will expand the expression and give the 13 terms which coincide with the action of $E_2$ and $E_1$ corresponding to $w_0=s_1s_2s_1s_2s_1s_2$ with $b_i$ replaced by $b_i\inv$. 
\end{proof}

Therefore we have the correspondence of generators:
\begin{eqnarray}
\begin{tikzpicture}[scale=.4,baseline=-1ex]
    \draw[xshift=0 cm,thick,fill=black] (0 cm, 0) circle (.3 cm);
    \foreach \x in {1,...,5}
    \draw[xshift=\x cm,thick] (\x cm,0) circle (.3cm);
    \draw[dotted,thick] (8.3 cm,0) -- +(1.4 cm,0);
    \foreach \y in {1.15,...,3.15}
    \draw[xshift=\y cm,thick] (\y cm,0) -- +(1.4 cm,0);
    \draw[thick] (0.3 cm, .1 cm) -- +(1.4 cm,0);
    \draw[thick] (0.3 cm, -.1 cm) -- +(1.4 cm,0);
    \foreach \z in {1,...,5}
    \node at (2*\z-2,-1) {$\z$};
\node at (10,-1){$n$};
\node at (5, -2){$B_n$};
  \end{tikzpicture}
&\longleftrightarrow&
  \begin{tikzpicture}[scale=.4,baseline=-1ex]
    \draw[xshift=0 cm,thick] (0 cm, 0) circle (.3 cm);
    \foreach \x in {1,...,5}
    \draw[xshift=\x cm,thick,fill=black] (\x cm,0) circle (.3cm);
    \draw[dotted,thick] (8.3 cm,0) -- +(1.4 cm,0);
    \foreach \y in {1.15,...,3.15}
    \draw[xshift=\y cm,thick] (\y cm,0) -- +(1.4 cm,0);
    \draw[thick] (0.3 cm, .1 cm) -- +(1.4 cm,0);
    \draw[thick] (0.3 cm, -.1 cm) -- +(1.4 cm,0);
    \foreach \z in {1,...,5}
    \node at (2*\z-2,-1) {$\z$};
\node at (10,-1){$n$};
\node at (5, -2){$C_n$};\node at (5, -3){};
  \end{tikzpicture}\\
  \begin{tikzpicture}[scale=.4,baseline=-1ex]
    \draw[thick] (-2 cm ,0) circle (.3 cm);
	\node at (-2,-1) {$1$};
    \draw[thick] (0 ,0) circle (.3 cm);
	\node at (0,-1) {$2$};
    \draw[thick,fill=black] (2 cm,0) circle (.3 cm);
	\node at (2,-1) {$3$};
    \draw[thick,fill=black] (4 cm,0) circle (.3 cm);
	\node at (4,-1) {$4$};
    \draw[thick] (15: 3mm) -- +(1.5 cm, 0);
    \draw[xshift=-2 cm,thick] (0: 3 mm) -- +(1.4 cm, 0);
    \draw[thick] (-15: 3 mm) -- +(1.5 cm, 0);
    \draw[xshift=2 cm,thick] (0: 3 mm) -- +(1.4 cm, 0);
\node at (1, -2){$F_4$};\node at (1, -3){};
  \end{tikzpicture} 
&\longleftrightarrow&
\begin{tikzpicture}[scale=.4,baseline=-1ex]
    \draw[thick,fill=black] (-2 cm ,0) circle (.3 cm);
	\node at (-2,-1) {$4$};
    \draw[thick,fill=black] (0 ,0) circle (.3 cm);
	\node at (0,-1) {$3$};
    \draw[thick] (2 cm,0) circle (.3 cm);
	\node at (2,-1) {$2$};
    \draw[thick] (4 cm,0) circle (.3 cm);
	\node at (4,-1) {$1$};
    \draw[thick] (15: 3mm) -- +(1.5 cm, 0);
    \draw[xshift=-2 cm,thick] (0: 3 mm) -- +(1.4 cm, 0);
    \draw[thick] (-15: 3 mm) -- +(1.5 cm, 0);
    \draw[xshift=2 cm,thick] (0: 3 mm) -- +(1.4 cm, 0);
\node at (1, -2){$F_4$};
  \end{tikzpicture}\\
 \begin{tikzpicture}[scale=.4,baseline=-1ex]
    \draw[thick] (0 ,0) circle (.3 cm);
    \draw[thick,fill=black] (2 cm,0) circle (.3 cm);
    \draw[thick] (30: 3mm) -- +(1.5 cm, 0);
    \draw[thick] (0: 3 mm) -- +(1.4 cm, 0);
    \draw[thick] (-30: 3 mm) -- +(1.5 cm, 0);
\node at (0,-1){$1$};
\node at (2,-1){$2$};
\node at (1, -2){$G_2$};\node at (1, -3){};
  \end{tikzpicture} 
&\longleftrightarrow&
  \begin{tikzpicture}[scale=.4,baseline=-1ex]
    \draw[thick,fill=black] (0 ,0) circle (.3 cm);
    \draw[thick] (2 cm,0) circle (.3 cm);
    \draw[thick] (30: 3mm) -- +(1.5 cm, 0);
    \draw[thick] (0: 3 mm) -- +(1.4 cm, 0);
    \draw[thick] (-30: 3 mm) -- +(1.5 cm, 0);
\node at (0,-1){$2$};
\node at (2,-1){$1$};
\node at (1, -2){$G_2$};
  \end{tikzpicture}
\end{eqnarray}

\section{Modified quantum group and its modular double}\label{sec:modified}
As in the simply-laced case, we note that the generators $E_i,F_i,K_i$ in general do not commute with $\til[E_i],\til[F_i],\til[K_i]$. For example, in type $B_2$, the relation $K_1E_2=q\inv E_2K_1$ implies
$$K_1\til[E_2]=q^{\frac{1}{b^2}}\til[E_2]K_1=-\til[E_2]K_1.$$
More precisely, we have:
\begin{Prop}
The generators $E_i,F_i,K_i$ commute with $\til[E_i],\til[F_i],\til[K_i]$ up to a sign. We have
\begin{eqnarray*}
\til[E_i]E_j&=&(-1)^{a_{ij}}E_j\til[E_i]\\
\til[E_i]K_j&=&(-1)^{a_{ij}}K_j\til[E_i]\\
\til[K_i]E_j&=&(-1)^{a_{ij}}E_j\til[K_i]
\end{eqnarray*} 
and similarly for $E_i$ replaced by $F_i$.
\end{Prop}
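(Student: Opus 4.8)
The plan is to split the listed relations into two groups: the ones involving a Cartan generator $K$, which I would settle by a clean abstract argument, and the ``same-type'' relation $\til[E_i]E_j=(-1)^{a_{ij}}E_j\til[E_i]$, which I would verify inside the explicit positive representation. The single arithmetic input behind everything is
\[
q_i^{1/b_i^2}=e^{\pi i b_i^2/b_i^2}=e^{\pi i}=-1,
\]
together with the symmetry of the Cartan data $b_i^2a_{ij}=b_j^2a_{ji}$ (equivalently $d_ia_{ij}=d_ja_{ji}$, since $b_i^2=b^2(\a_i,\a_i)/2$ by Definition \ref{qi}); this is precisely what converts a transcendental power of $q_i$ into a sign.

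First I would dispose of the $K$-relations. The defining relation $K_iE_j=q_i^{a_{ij}}E_jK_i$ says conjugation by the positive self-adjoint operator $K_i$ rescales $E_j$ by the scalar $q_i^{a_{ij}}$, i.e. $K_iE_jK_i\inv=q_i^{a_{ij}}E_j$. For a scalar relation of this Weyl type the power law $K_i^sE_jK_i^{-s}=q_i^{sa_{ij}}E_j$ holds for every real $s$ by functional calculus, so taking $s=1/b_i^2$ gives $\til[K_i]E_j\til[K_i]\inv=(-1)^{a_{ij}}E_j$, which is the relation for $\til[K_i]E_j$. Symmetrically, starting from $K_jE_i=q_j^{a_{ji}}E_iK_j$ and raising $E_i$ (not $K_j$) to the power $1/b_i^2$ — legitimate because $E_i$ is positive and the relation is scalar — yields $K_j\til[E_i]K_j\inv=(q_j^{a_{ji}})^{1/b_i^2}\til[E_i]=e^{\pi i b_j^2a_{ji}/b_i^2}\til[E_i]=(-1)^{a_{ij}}\til[E_i]$, where the exponent is an integer multiple of $\pi i$ precisely because $b_j^2a_{ji}/b_i^2=a_{ij}$. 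The central prefactors $2\sin\pi b_i^{-2}$ relating $\til[E_i]$ to $E_i^{1/b_i^2}$ play no role. The same computation with $K_iF_j=q_i^{-a_{ij}}F_jK_i$ handles the $F$-analogues, the extra sign in the exponent being invisible modulo $2$.

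For the remaining relation $\til[E_i]E_j=(-1)^{a_{ij}}E_j\til[E_i]$ there is no scalar Weyl relation between $E_i$ and $E_j$ to power up, so I would work inside the explicit representation. There every $E_j$ and every $\til[E_i]$ is a finite sum of Weyl monomials $e^{X}$ with $X$ a real-linear form in the canonical variables $u_k,p_k$; by Theorem \ref{main} the tilde operation only replaces the frequency prefactor $b_s$ (resp. $b_l$) by $b_s\inv$ (resp. $b_l\inv$), leaving the integer $u$- and $p$-coefficients untouched. For two such monomials $e^X,e^Y$ the canonical commutation relations give $e^Xe^Y=e^{[X,Y]}e^Ye^X$ with $[X,Y]$ a central scalar, and the crucial point is that a variable $u_k$ and its conjugate $p_k$ carry the \emph{same} frequency (they share the same root type), so each cross term contributes a factor $b_\bullet\cdot b_\bullet\inv=1$: all frequencies cancel and $[X,Y]\in\pi i\Z$, forcing the commutation factor to be a sign. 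To pin that sign down I would reduce to rank $2$ exactly as in the proof of Theorem \ref{main}, choosing a reduced word for $w_0$ ending in $s_is_js_i$, $s_is_js_is_j$, or the $G_2$ block, so that only the variables of that block enter, and then read off $(-1)^{a_{ij}}$ monomial-by-monomial from Theorems \ref{B2} and \ref{G2}.

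The hard part will be exactly this last bookkeeping: one must check that the \emph{total} sign accumulated over all pairs of Weyl monomials from $E_j$ and $\til[E_i]$ collapses to the single value $(-1)^{a_{ij}}$, and that the rank-$2$ reduction is faithful — i.e. that the variables outside the chosen $s_is_j\cdots$ block genuinely decouple from the commutator. Both are forced by the locality of the Lusztig coordinates but require the explicit expressions to confirm. The $F$-versions $\til[F_i]F_j=(-1)^{a_{ij}}F_j\til[F_i]$ then follow verbatim, since $F_j$ and $\til[F_i]$ are built from the same kind of Weyl monomials and the frequency-cancellation and sign computation are identical.
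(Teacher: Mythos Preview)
The paper does not actually prove this Proposition: it states it after the single illustrative computation $K_1\til[E_2]=q^{1/b^2}\til[E_2]K_1=-\til[E_2]K_1$ in type $B_2$ and moves on. Your treatment of the $K$--relations by functional calculus is exactly that example written in general, including the use of $b_j^2a_{ji}=b_i^2a_{ij}$ to identify the exponent, and it is correct. For the $\til[E_i]E_j$ relation your Weyl--monomial plus rank--$2$ reduction argument is also correct in substance, and the reduction to a terminal $s_is_js_i(s_j)$ block is precisely the device the paper itself uses in the proof of Theorem~\ref{main}; so you are not inventing new machinery, only reusing it.

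One point should be sharpened. You write that ``the tilde operation only replaces the frequency prefactor $b_s$ (resp.\ $b_l$) by $b_s\inv$ (resp.\ $b_l\inv$), leaving the integer $u$- and $p$-coefficients untouched.'' That is not literally true: compare $e_1$ in \eqref{e1B}, which has three bracket terms, with $(e_1)^{1/b_s^2}$ computed in Lemma~\ref{B2trans}, which has four, and whose integer coefficients (e.g.\ $2p_w$, $2u$) differ from those of $e_1$. What \emph{is} true, and is all you need, is that every monomial occurring in $\til[e_i]$ lies in the $*$--lattice of Definition preceding Lemma~\ref{B2trans}: each variable $u_k$ and each momentum $p_k$ appears only with the prefactor $b_{r(k)}\inv$ attached to an integer. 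This is exactly what the notation $([\,\cdot\,]e(\,\cdot\,))_*$ encodes, and it is enough to force every cross term in $[X,Y]$ to carry a factor $b_{r(k)}b_{r(k)}\inv=1$, hence $e^{[X,Y]}\in\{\pm1\}$. Once you phrase it this way, your ``hard part'' --- that the sign is uniformly $(-1)^{a_{ij}}$ across all monomial pairs --- is a finite rank--$2$ check against the explicit formulas of Theorems~\ref{B2}* and~\ref{G2}, and goes through.
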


Hence as in \cite{Ip}, we modify the generators with powers of $K_i$ in order to take care of the commutation relation between the generators and the other part of the modular double. It turns out the modification is exactly the same even for non simply-laced type, and the corresponding results are as follows.

\begin{Prop} For each node $i$ in the Dynkin diagram, we assign a weight $n_i\in\{0,1\}$ such that $|n_i-n_j|=1$ if $i,j$ are connected in the diagram, so that $n_i$ alternates along the edges.

We define $\fq:=q^2=e^{2\pi ib^2}$ and
\Eq{\fq_i:=\case{q_i^{-2}&\mbox{if $n_i=0$,}\\ q_i^2&\mbox{if $n_i=1$,}}} where $q_i$ is defined by \eqref{qiBCF} and \eqref{qiG} as before, and we define the modified quantum generators by
\begin{eqnarray}
\bE_i&:=&q_i^{n_i} E_iK_i^{n_i},\\
\bF_i&:=&q_i^{1-n_i}F_iK_i^{n_i-1},\label{modified}\\
\bK_i&:=&\fq_i^{H_i}=\case{K_i^{-2}&\mbox{if $n_i=0$,}\\K_i^{2}&\mbox{if $n_i=1$,}}
\end{eqnarray}

Then the variables are positive self-adjoint. Let 
\Eq{[A,B]_\fq=AB-\fq\inv BA}
be the quantum commutator. Then the quantum relations in the new variables become:
\begin{eqnarray}
\label{bKE}\bK_i\bE_j&=&{\fq_i}^{a_{ij}}\bE_j\bK_i,\\
\label{bKF}\bK_i\bF_j&=&{\fq_i}^{-a_{ij}}\bF_j\bK_i,\\
\bE_i\bF_j&=&\bF_j\bE_i\tab \mbox{ if $i\neq j$},\\
{[\bE_i,\bF_i]}_{\fq_i}&=&\frac{1-\bK_i}{1-\fq_i},
\end{eqnarray}
 and the quantum Serre relations become
\begin{eqnarray}
[[[[\bE_j,\bE_{i}]_{\fq_i^{-a_{ij}}},...\bE_{i}]_{\fq_i^2},\bE_i]_{\fq_i},\bE_i]&=&0,\\
{[[[[\bF_j,\bF_{i}]_{\fq_i^{-a_{ij}}},...\bF_{i}]_{\fq_i^2},\bF_i]_{\fq_i},\bF_i]}&=&0.
\end{eqnarray}
We denote the modified quantum group by $\bU_{\fq}(\g_\R)$.
\end{Prop}

Hence we can now state the main theorem for $\bU_{\fq\til[\fq]}(\g_\R)$:

\begin{Thm}
Let $\til[\fq]:=\til[q]^2=e^{2\pi \bi b_s^{-2}}$. We define the tilde part of the modified modular double by 
\begin{eqnarray}
\til[\be_i]&:=&(\be_i)^{\frac{1}{b_i^2}},\\
\til[\bf_i]&:=&(\bf_i)^{\;\frac{1}{b_i^2}},\\
\til[\bK_i]&:=&(\bK_i)^{\frac{1}{b_i^2}},
\end{eqnarray}
where as before
\begin{eqnarray}
\be_i=2\sin(\pi b_i^2)\bE_i,&\tab&\bf_i=2\sin(\pi b_i^2)\bF_i,\\
\til[\be_i]=2\sin(\pi b_i^{-2})\til[\bE_i],&\tab&\til[\bf_i]=2\sin(\pi b_i^{-2})\til[\bF_i].
\end{eqnarray}

Then the properties of positive representations are satisfied:

\begin{itemize}
\item[(i)] the operators $\be_i,\bf_i,\bK_i$ and their tilde counterparts are represented by positive essentially self-adjoint operators,
\item[(ii)] the operators $\til[\bE_i],\til[\bF_i],\til[\bK_i]$ generates the Langlands dual $\bU_{\til[\fq]}({}^L\g_\R)$,
\item[(iii)] \mbox{all the generators $\bE_i,\bF_i,\bK_i$ commute with all $\til[\bE_j],\til[\bF_j],\til[\bK_j]$.}
\end{itemize}
 Therefore we have constructed the positive principal series representations of the modular double 
\Eq{\bU_{\fq\til[\fq]}(\g_\R):=\bU_{\fq}(\g_\R)\ox \bU_{\til[\fq]}({}^L\g_\R),}
parametrized by $rank(\g)$ numbers $\l_i\in\R$.
\end{Thm}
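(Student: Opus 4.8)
The plan is to assemble the three properties from material already in place, handling (i) and (ii) quickly and reserving the real work for (iii).

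For property (i) I would first note that the unmodified operators $e_i,f_i,K_i$ are positive essentially self-adjoint directly from their explicit expressions (Theorem \ref{B2}, Theorem \ref{G2}, and the general-type formulas of Section \ref{sec:posrep}): each is a finite sum of terms $[u]e(p)$, that is, a sum of exponentials $e^{L}$ with $L$ affine-linear and self-adjoint in the Mellin variables. For the modified generators $\bE_i=q_i^{n_i}E_iK_i^{n_i}$, etc., self-adjointness follows from the phase-compensation identity: from $K_ie_i=q_i^2e_iK_i$ and $|q_i|=1$ one gets $(q_i^{n_i}e_iK_i^{n_i})^*=\bar q_i^{\,n_i}K_i^{n_i}e_i=q_i^{n_i}e_iK_i^{n_i}$, so the phase $q_i^{n_i}$ exactly cancels the non-commutativity, and positivity then follows by re-expressing $\be_i,\bf_i,\bK_i$ as sums of exponentials of self-adjoint operators (this is already asserted in the preceding Proposition). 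The tilde operators $\til[\be_i]=(\be_i)^{1/b_i^2}$ are then positive essentially self-adjoint automatically, being positive real powers of positive self-adjoint operators through the functional calculus.

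For property (ii) I would invoke Theorem \ref{main}, which already shows that the unmodified $\til[E_i],\til[F_i],\til[K_i]$ generate $\cU_{\til[q]}({}^L\g_\R)$. It then suffices to identify the transcendental power $(\be_i)^{1/b_i^2}$ with the Langlands-dual modular-double modification $\til[q_i]^{n_i}\til[E_i]\til[K_i]^{n_i}$ (up to the scalar $2\sin(\pi b_i^{-2})$). Since the modification multiplies $e_i$ by the single exponential $q_i^{n_i}K_i^{n_i}$, I would write $\be_i$ in the explicit exponential form above and apply Lemma \ref{qbi} to each $q_i^2$-commuting pair of summands: the $1/b_i^2$-power then simply sends $b_i\mapsto b_i\inv$ throughout, which is precisely the modification applied to the tilde generator. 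With this identification the modified relations \eqref{bKE}--\eqref{bKF}, together with the modified Serre relations, hold for the tilde generators with $\fq_i$ replaced by $\til[\fq_i]$ and the Cartan matrix transposed; these are exactly the defining relations of $\bU_{\til[\fq]}({}^L\g_\R)$.

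Property (iii) is the heart of the theorem and the step I expect to be the main obstacle. I would start from the sign-commutation relations recorded above, $\til[E_i]E_j=(-1)^{a_{ij}}E_j\til[E_i]$ and its $K$- and $F$-analogues, and commute the modified operator $\til[\bE_j]=\til[q_j]^{n_j}\til[E_j]\til[K_j]^{n_j}$ past $\bE_i=q_i^{n_i}E_iK_i^{n_i}$. The four contributions ($\til[E_j]$ versus $E_i$ and versus $K_i^{n_i}$, and $\til[K_j]^{n_j}$ versus $E_i$ and versus $K_i^{n_i}$, the last trivial since all Cartan exponentials commute) accumulate to the net factor $(-1)^{a_{ji}(1+n_i+n_j)}$. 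For $i=j$ this is $+1$ since $a_{ii}=2$; for $i\neq j$ connected, whenever $a_{ji}$ is even the factor is trivial, and whenever $a_{ji}$ is odd the weight condition $|n_i-n_j|=1$ forces $1+n_i+n_j$ to be even. Every finite Dynkin diagram is a tree, hence bipartite, so the alternating weights $n_i$ exist and all signs cancel; the identical bookkeeping with $E$ replaced by $F$ (and $n_i$ replaced by $n_i-1$ in the $\bf_i$ modification) disposes of the remaining generators. The only feature genuinely beyond the simply-laced case is the occurrence of off-diagonal entries $a_{ij}\in\{-2,-3\}$, but the parity count shows the even entries are automatically harmless and the odd entries are controlled by exactly the same alternating-weight condition. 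Combining (i)--(iii) then yields the positive principal series representation of $\bU_{\fq\til[\fq]}(\g_\R)=\bU_{\fq}(\g_\R)\ox\bU_{\til[\fq]}({}^L\g_\R)$ parametrized by $\l\in\R^r$.
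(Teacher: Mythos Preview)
The paper gives no explicit proof of this theorem; it assembles the statement from the preceding Proposition and Theorem \ref{main}, and implicitly defers to the simply-laced treatment in \cite{Ip}. Your explication of (i) and (ii) is sound and matches what the paper has in mind: positivity is read off from the explicit exponential form together with the phase compensation $q_i^{n_i}$, and the Langlands-dual relations for the tilde generators are inherited from Theorem \ref{main} once you observe that the modification $e_i\mapsto q_i^{n_i}e_iK_i^{n_i}$ is carried to the corresponding modification on the dual side under $b_i\mapsto b_i\inv$.

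Your argument for (iii), however, has a genuine gap. The sign-counting you set up handles $\bE_i$ against $\til[\bE_j]$ and, with the shift $n_i\mapsto n_i-1$, $\bF_i$ against $\til[\bF_j]$; a short additional check (which you omit) also covers the cross case $\bE_i$ against $\til[\bF_j]$ for $i\neq j$, since $E_i$ and $F_j$ already commute and the remaining $K$-contributions give $(-1)^{a_{ji}(n_i+n_j-1)}=1$. What your bookkeeping cannot reach is the same-index cross pair $\bE_i$ versus $\til[\bF_i]$: here $E_i$ and $F_i$ do not commute, so there is no sign relation between $E_i$ and $\til[F_i]$ to feed into your four-term product, and the Proposition you rely on does not supply one. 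This commutation is genuine content---in rank one it is precisely Faddeev's original modular-double statement for $\cU_q(\sl(2,\R))$---and it does not follow from parity arithmetic alone. The cleanest way to close the gap within the paper's framework is to pass to the explicit exponential form: after the modification each $\be_i,\bf_i$ is a finite sum of monomials $e^{2\pi b_k u_k},e^{2\pi b_k p_k}$ (this is exactly the content of the $q$-tori embedding proved immediately after), and the tilde counterparts are obtained by $b_k\mapsto b_k\inv$; the Weyl relation $e^{2\pi b_k u_k}e^{2\pi b_k\inv p_k}=e^{2\pi i}e^{2\pi b_k\inv p_k}e^{2\pi b_k u_k}$ then gives exact commutation of every monomial pair. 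You should either invoke that embedding or give a direct rank-one verification for the same-index $E$--$F$ case.
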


Not surprisingly, all the additional properties in the simply-laced case \cite{Ip} also hold in the general case, with slight modifications. The proof of the following result about the commutant is exactly the same as in the simply-laced case (with $A^T$ introduced due to the factor $\frac{1}{2}(\a_i,\a_i)$ in the proof).

\begin{Thm}\label{Langlands}The commutant for the positive representation of $\bU_{\fq}(\g_\R)$ is generated by $\til[\bE_i],\til[\bF_i]$ and elements of the form
\Eq{\label{KKKK}\til[\bK^{\bb^k}]:=\til[\bK]_1^{b_1^k}\til[\bK]_2^{b_2^k}\cdots\til[\bK]_{n}^{b_{n}^k},}
for $k=1,...,n$, where the vector $\bb_k=(b_1^k,b_2^k,...,b_{n}^k)^T$ satisfies
\Eq{A^T\bb_k=\be_k}
where $A=(a_{ij})$ is the Cartan matrix, $A^T$ its transpose, and $\be_k$ are the standard unit vectors. 
\end{Thm}

\begin{Rem} We note that the lattice generated by $\til[\bK_i]$ is a sub-lattice generated by $\til[\bK^{\bb^k}]$. In fact one can check that the commutant of (the adjoint form) $\bU_{\fq}(\g_\R)$ is precisely the simply-connected form of its Langlands dual quantum group, denoted by $\widehat{\bU_{\til[\fq]}}({}^L\g_\R)$. For example, in type $B_n$, $\widehat{\bU_{\til[\fq]}}({}^L\g_\R)$ is just adjoining $\til[K_1]^{\frac{1}{2}}$ to $\bU_{\til[\fq]}({}^L\g_\R)$.
\end{Rem}

Let the index $i$ corresponds to the coordinate parametrization $\{x_i\}$, and $b_i$ as usual denotes $b_s$ or $b_l$ depending on whether the corresponding root is short or long. Let $s$ (resp. $l$) be the number of indices corresponding to short (resp. long) roots, so that $s+l=l(w_0)$. Let $\C[\T_{\fq\til[\fq]}^{s,l}]$ be the quantum torus algebra generated by $\{\bu_i^{\pm1},\bv_i^{\pm1},\til[\bu_i]^{\pm1},\til[\bv_i]^{\pm1}\}_{i=1}^{l(w_0)}$ with the $\fq$-commutation relations:
\Eq{\bu_{i}\bv_{i}=\fq_i \bv_{i}\bu_{i}, \tab \til[\bu_{i}]\til[\bv_{i}]=\til[\fq_i]\til[\bv_{i}]\til[\bu_{i}],}
 which can be realized by positive self adjoint operators
\Eq{\label{fulltorus}\bu_{i}=e^{2\pi b_iu_{i}},\tab \bv_{i}=e^{2\pi b_ip_{i}},}
and similarly for $\til[\bu_{i}],\til[\bv_{i}]$ with $b_i$ replaced by $b_i\inv$. Then we have Theorem \ref{Thmqtori} of the introduction generalizing the results of the simply-laced case.

\begin{Thm} We have an embedding
\Eq{\bU_{\fq\til[\fq]}(\g_\R)\inj \C[\T_{\fq\til[\fq]}^{s,l}]}
where each generator of $\bU_{\fq\til[\fq]}(\g_\R)$ is realized as a Laurent polynomial of $\{\bu_i, \bv_j\}$.

In particular
\begin{eqnarray*}
\bU_{\fq\til[\fq]}(\g_\R)&\inj& \C[\T_{\fq\til[\fq]}^{n,n^2-n}]\tab\mbox{$\g$ is of type $B_n$}\\
\bU_{\fq\til[\fq]}(\g_\R)&\inj& \C[\T_{\fq\til[\fq]}^{n^2-n,n}]\tab\mbox{$\g$ is of type $C_n$}\\
\bU_{\fq\til[\fq]}(\g_\R)&\inj& \C[\T_{\fq\til[\fq]}^{12,12}]\;\;\;\tab\mbox{$\g$ is of type $F_4$}\\
\bU_{\fq\til[\fq]}(\g_\R)&\inj& \C[\T_{\fq\til[\fq]}^{3,3}]\tab\tab\mbox{$\g$ is of type $G_2$}
\end{eqnarray*}
\end{Thm}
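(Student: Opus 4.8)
The plan is to exhibit the embedding concretely: realize each modified generator of $\bU_{\fq\til[\fq]}(\g_\R)$ as an explicit Laurent polynomial in the quantum torus variables $\bu_i^{\pm1},\bv_i^{\pm1},\til[\bu_i]^{\pm1},\til[\bv_i]^{\pm1}$ attached to the Mellin data of a fixed reduced word $w_0$, and then check that the resulting assignment is an injective algebra homomorphism. The map is forced: on generators it is the positive representation itself, rewritten in the variables \eqref{fulltorus}. Since the positive representation and its modular double have already been shown to satisfy all the defining relations of $\bU_\fq(\g_\R)$ and of $\bU_{\til[\fq]}({}^L\g_\R)$, and since the two halves commute by property (iii), any such assignment automatically respects the relations; the only two things left to prove are (a) that the image actually lands in the \emph{Laurent polynomial} ring (integer exponents, no square roots of the torus variables), and (b) injectivity.

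For (a), the key point is that the individual operators $E_i,F_i,K_i$ written in the notation of Definition \ref{notation} involve half-integer powers of $\bu_i=e^{2\pi b_iu_i}$, coming from the factors $e^{\pm\pi b_iu_i}$ in each bracket $[\,\cdot\,]$, whereas all shift factors $e^{2\pi b_ip_i}=\bv_i$ already carry integer exponents. First I would record, directly from Proposition \ref{FH}* and the explicit $E_i$ of Sections \ref{sec:Bn}--\ref{sec:G2}, the exact half-integer contribution of each monomial to every $\bu_i$. Then I would show that multiplying by $K_i^{n_i}$ (resp.\ $K_i^{n_i-1}$ for $F_i$), with the alternating weights $n_i\in\{0,1\}$, shifts every such exponent by the matching half-integer, so that the modified generators $\bE_i=q_i^{n_i}E_iK_i^{n_i}$, $\bF_i=q_i^{1-n_i}F_iK_i^{n_i-1}$ and $\bK_i=\fq_i^{H_i}$ acquire only integer exponents. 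The condition $|n_i-n_j|=1$ along edges is precisely what makes the parities of the Cartan entries $a_{r(j),i}$ cancel against the parities coming from the brackets; this is the same mechanism as in the simply-laced case \cite{Ip}, but now the verification must be carried out node by node for the doubly- and triply-laced connections, which is where the short/long rescaling by $\sqrt2$ and $\sqrt3$ enters. I expect this clearing-of-denominators bookkeeping to be the main obstacle, since it is exactly the place where the non-simply-laced structure could a priori obstruct integrality.

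For the tilde half I would invoke the Main Theorem (Theorem \ref{main}): by the transcendental relations $\til[\bE_i]=(\bE_i)^{1/b_i^2}$, $\til[\bF_i]=(\bF_i)^{1/b_i^2}$, $\til[\bK_i]=(\bK_i)^{1/b_i^2}$ are given by the same formulas with every $b_i$ replaced by $b_i\inv$, hence are Laurent polynomials in $\til[\bu_i]=e^{2\pi b_i\inv u_i}$ and $\til[\bv_i]=e^{2\pi b_i\inv p_i}$ with dual parameter $\til[\fq_i]$, and they commute with the untilded generators by property (iii). Thus the combined map factors through $\C[\T_{\fq\til[\fq]}^{s,l}]$, with $s$ (resp.\ $l$) torus pairs $\bu_i,\bv_i$ indexed by the short (resp.\ long) entries of $w_0$.

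Finally, for injectivity I would argue by leading terms, as in \cite{Ip}: fix a PBW-type ordering of the ordered monomials in the $\bF_i$'s, $\bK_i$'s and $\bE_i$'s (together with their tilde analogues) in $\bU_{\fq\til[\fq]}(\g_\R)$, and show that under the map their images have pairwise distinct dominant torus monomials, the parameters $\l_i$ contributing only scalar factors that do not affect this distinctness. Because distinct monomials of a quantum torus are linearly independent, no nontrivial combination can map to zero, which gives injectivity. The stated special cases then follow by substituting the number of positive roots of each length into $s+l=l(w_0)$: for $B_n$ there are $n$ short and $n(n-1)$ long positive roots, for $C_n$ the reverse, for $F_4$ twelve of each, and for $G_2$ three of each, yielding the four displayed embeddings.
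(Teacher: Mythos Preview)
Your step (a) does not go through, and this is the heart of the argument. Take already the $B_2$ case with $w_0=s_1s_2s_1s_2$. In the torus variables $\bu_i=e^{2\pi b_iu_i}$, $\bv_i=e^{2\pi b_ip_i}$, the term $[t]e(-p_t-p_u+p_w)$ of $e_1$ contributes $\bu_t^{\pm 1/2}\bv_t^{-1}\bv_u^{-1}\bv_w$, while $K_1=e^{\pi b_s(2\l_1-2t-2v)}e^{\pi b(u+w)}$ has exponents $(-1,-1,\tfrac12,\tfrac12)$ in $(\bu_t,\bu_v,\bu_u,\bu_w)$. Thus $E_1K_1^{n_1}$ carries half-integer exponents in $\bu_t,\bu_u,\bu_w$ for either choice $n_1\in\{0,1\}$; the same happens for $e_2=[w]e(-p_w)$, whose $\bu_w^{\pm1/2}$ is not cleared by $K_2$ (which has integer $\bu$-exponents). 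So the twist by $K_i^{n_i}$ alone does \emph{not} force the modified generators into $\C[\T_{\fq\til[\fq]}^{s,l}]$, and the parity bookkeeping you describe cannot succeed as stated. The same obstruction already occurs in the simply-laced case, so the reference to the ``same mechanism as in \cite{Ip}'' is a misreading of that paper.

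What is actually needed, and what the paper does, is a further \emph{unitary conjugation} by explicit Gaussian multiplication operators $e^{\pi i\frac{b_k}{b_i}v_iv_k}$ and $e^{\frac12\pi i v_i^2}$, applied for pairs $i>k$ with $n_{r(i)}=0$ and $r(i),r(k)$ adjacent. These conjugations shift $2\pi b_ip_i\mapsto 2\pi b_ip_i+\pi b_kv_k$, which is precisely what absorbs the stray half-integer $\bu$-exponents into the $\bv$-monomials so that all generators of $\bU_\fq(\g_\R)$ and $\bU_{\til[\fq]}({}^L\g_\R)$ simultaneously become genuine Laurent polynomials in the $\bu_i,\bv_i,\til[\bu_i],\til[\bv_i]$. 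The non-simply-laced novelty is only the scaling $\frac{b_k}{b_i}$ in the Gaussian exponent. Your outline of the tilde half via Theorem~\ref{main}, and the leading-term injectivity argument, are reasonable once this conjugation is in place, but without it the image simply does not lie in the stated Laurent polynomial ring.
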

\begin{proof} The proof is exactly the same as in the simply-laced case, with scaling by appropriate $b$'s. Since positive representations are irreducible, to obtain such embedding, all we need is a transformation of the representations of the modified generators $\bE_i, \bF_i, \bK_i$ that shifts the $p$'s appropriately so that the coefficients of the variables are all even. In particular, since only the parity matters, we observe that we can ignore the brackets in our notation (cf. Definition \ref{notation}) when considering such shifts. Note that this embedding is by no mean unique.

First we do this for the expressions of $\bF_i$. Explicitly, as before let $v_i$ denote the coordinate used in Definition \ref{FH}* and $r(i)$ its corresponding root. Also let $$c_{jk}:=a_{r(k)r(j)}b_k/b_j.$$ Then the unitary transformation can be obtained from multiplication by the unitary functions
\begin{eqnarray*}
e^{\frac{1}{2}\pi iv_i^2}&:&2\pi b_ip_i\mapsto 2\pi b_ip_i+\pi b_iv_i,
\end{eqnarray*}
for all $i$, and
\begin{eqnarray*}
e^{\pi ic_{jk}v_jv_k}&:&2\pi b_jp_j\mapsto 2\pi b_jp_j+\pi a_{r(k)r(j)}b_k v_k,\tab 2\pi b_kp_k\mapsto 2\pi b_kp_k+\pi a_{r(j)r(k)}b_jv_j
\end{eqnarray*}
whenever $j<k$, $n_{r(j)}=0$ and $r(j),r(k)$ are adjacent in the Dynkin diagram. Note that these transformations commute with each other, so the order does not matter.

One can check from Definition \ref{FH}* and the modified formula \eqref{modified} that the first shifts from these transformations systematically cancel the odd coefficients for the terms of $\bF_i$ with $n_i=0$. The second shift then deals with the odd coefficients in the terms of $\bF_j$ adjacent to $\bF_i$ in the Dynkin diagram, corresponding to $n_j=1$.

In the original Definition \ref{FH}*, since each term of $\bF_i$ contains only a single $e(p_{v_k})$, and commutes or $\fq_{r(k)}$-commutes with other terms in $\bE_j$, the transformations defined above will automatically force all the coefficients of the terms in $\bE_j$ to be even as well.
\end{proof}

Finally as in the simply-laced case, we have the following unitary equivalence of positive representations:
\begin{Thm}Let $\cP_\l$ denote the positive principal series representations of $\cU_q(\g_\R)$ corresponding to the parameter $\l=(\l_i)_{i=1}^n$ where $n=rank(\g)$. Then 
\Eq{\cP_\l\simeq \cP_{w(\l)}}
are unitary equivalent representations for any Weyl group element $w\in W$ acting on $\l$, namely for simple reflections,
\Eq{s_i(\l_j):=\l_j-a_{ij}\l_i}
where $a_{ij}$ is the Cartan matrix. In particular, the positive principal series representations are parametrized by $\l\in\R_{\geq 0}^n$.
\end{Thm}
\begin{proof} The proof is exactly the same as in the simply-laced case, slightly modified with the parameter $b$ replaced by the appropriate $b_i$'s when applying the intertwiner $G_{\l_i}$ (cf. (11.4) in \cite{Ip}).
\end{proof}


\end{document}